\documentclass[a4paper]{amsart}

\usepackage{amsmath,amssymb,amsthm,stmaryrd}

\usepackage{color,graphicx}
\usepackage[dvipsnames]{xcolor}
\usepackage{accents}
\usepackage{enumerate}
\usepackage{varwidth}

\usepackage{fancyhdr, array,calc,graphicx,url,tabularx}
\usepackage{geometry}
\usepackage{latexsym}
\usepackage{amssymb}
\usepackage{amsmath}

\usepackage{enumerate}
\usepackage{verbatim}
\usepackage{tikz}
\usepackage[colorlinks=true,linkcolor=blue]{hyperref}

\usepackage{changepage}

\usepackage[utf8]{inputenc}
\usepackage{tikz}
\tikzset{
     block/.style={rectangle, draw, fill=red!40, text width=6em,
                   text centered, rounded corners, minimum height=3em},
     arrow/.style={-{Stealth[]}}
     }

\def\undertilde#1{\mathord{\vtop{\ialign{##\crcr
$\hfil\displaystyle{#1}\hfil$\crcr\noalign{\kern1.5pt\nointerlineskip}
$\hfil\tilde{}\hfil$\crcr\noalign{\kern1.5pt}}}}}

%
{
   \end{minipage}
   \vspace*{\stretch{3}}
   \clearpage
}

\def\undertilde#1{{\baselineskip=0pt\vtop
  {\hbox{$#1$}\hbox{$\scriptscriptstyle\sim$}}}{}}

%


\newcommand{\utilde}{\undertilde}

\renewcommand{\gg}{\gamma}

\newcommand{\bR}{{\mathbb{R}}}

\newcommand{\rest}{\restriction}
\newcommand{\la}{\langle}
\newcommand{\ra}{\rangle}

%
%

%
%

\newcommand{\card}[1]{{\vert #1 \vert} }

\renewcommand{\models}{\vDash}
\newcommand{\powerset}{{\wp}}
%

%
%
\newcommand{\dom}{{\rm dom}}
\newcommand{\rge}{{\rm rge}}
\newcommand{\cp}{{\rm crit }}

\newcommand{\cf}{{\rm cf}}
\newcommand{\lh}{{\rm lh}}

\newtheorem{theorem}{Theorem}[section]
\newtheorem{proposition}[theorem]{Proposition}
\newtheorem{definition}[theorem]{Definition}

\newtheorem{lemma}[theorem]{Lemma}
\newtheorem{corollary}[theorem]{Corollary}
\newtheorem{claim}[theorem]{Claim}

\newtheorem{conjecture}[theorem]{Conjecture}
\newtheorem{question}[theorem]{Question}

\newtheorem{remark}[theorem]{Remark}
\newtheorem{notation}[theorem]{Notation}

\numberwithin{figure}{section}

\newcommand{\rcl}[1]{Claim~\ref{#1}}

\newcommand{\rprop}[1]{Proposition~\ref{#1}}
\newcommand{\rthm}[1]{Theorem~\ref{#1}}
\newcommand{\rlem}[1]{Lemma~\ref{#1}}

\newcommand{\rdef}[1]{Definition~\ref{#1}}

\newcommand{\rrem}[1]{Remark~\ref{#1}}

\def\k{\kappa}
\def\a{\alpha}
\def\b{\beta}
\def\d{\delta}

\def\l{\lambda}

\def\cop{{\sf{cop}}}

\def\P{{\mathcal{P} }}
\def\W{{\mathcal{W} }}
\def\Q{{\mathcal{ Q}}}
\def\mH{{\mathcal{ H}}}
\def\K{{\mathcal{ K}}}

\def\R{{\mathcal R}}

\def\H{{\rm{HOD}}}
\def\M{{\mathcal{M}}}
\def\N{{\mathcal{N}}}

\def\T {{\mathcal{T}}}
\def\U{{\mathcal{U}}}
\def\S{{\mathcal{S}}}
\def\V{{\mathcal{V}}}
\def\X{{\mathcal{X}}}
\def\Y{{\mathcal{Y}}}
\def\Z{{\mathcal{Z}}}

\def\card#1{\left|#1\right|}

\def\iff{\mathrel{\leftrightarrow}}

\def\and{\mathrel{\kern1pt\&\kern1pt}}

\def\insegeq{\trianglelefteq}

\def\<#1>{\langle\,#1\,\rangle}

 \input xy
 \xyoption{all}

\pagenumbering{arabic}

\begin{document}

\title{${\sf{AD^+}}$ implies that $\omega_1$ is a club $\Theta$-Berkeley cardinal}

\author{Douglas Blue and Grigor Sargsyan} 
\address{Douglas Blue, IMPAN, Antoniego Abrahama 18, 81-825 Sopot, Poland.}\email{dblue@impan.pl}\address{Grigor Sargsyan, IMPAN, Antoniego Abrahama 18, 81-825 Sopot, Poland. }
\email{gsargsyan@impan.pl} 

\thanks{The authors work is funded by the National Science Centre, Poland under the WeaveUNISONO call in the Weave programme, registration number UMO-2021/03/Y/ST1/00281.}

\begin{abstract}
Following \cite{bagaria2019large}, given cardinals $\k<\l$, we say $\kappa$ is a club $\l$-Berkeley cardinal if for every transitive set $N$ of size $<\l$ such that $\kappa\subseteq N$, there is a club $C\subseteq \k$ with the property that for every $\eta\in C$ there is an elementary embedding $j: N\rightarrow N$ with $\cp(j)=\eta$. We say $\kappa$ is $\nu$-club $\l$-Berkeley if $C\subseteq \k$ as above is a $\nu$-club. We say $\k$ is $\l$-Berkeley if $C$ is unbounded in $\k$. We show that under ${\sf{AD^+}}$, (1) every regular Suslin cardinal is $\omega$-club $\Theta$-Berkeley (see \rthm{main theorem}), (2) $\omega_1$ is club $\Theta$-Berkeley (see \rthm{main theorem lr} and \rthm{main theorem}), and (3) the $\utilde\d^1_{2n}$'s are $\Theta$-Berkeley---in particular, $\omega_2$ is $\Theta$-Berkeley (see \rrem{omega2}).

Along the way, we represent regular Suslin cardinals in direct limits as cutpoint cardinals (see \rthm{char extenders}). This topic has been studied in \cite{MPSC} and \cite{jackson2022suslin}, albeit from a different point of view. We also show that, assuming $V=L(\bR)+{\sf{AD}}$, $\omega_1$ is not $\Theta^+$-Berkeley, so the result stated in the title is optimal (see \rthm{lr optimal} and \rthm{thetareg optimal}).

Mathematics Subject Classification: 03E55, 03E57, 03E60 and 03E45.
\end{abstract}

\maketitle
\setcounter{tocdepth}{1}

\section{Introduction}
Kunen \cite{kunen1971elementary} famously showed that the existence of a nontrivial elementary embedding $j:V_{\lambda+2} \to V_{\lambda+2}$ is inconsistent with $\mathsf{ZFC}$.
The intractability of the question whether $\mathsf{ZF}$ refutes the existence of such an embedding led Woodin to define, in his set theory seminar in the 1990s, a large cardinal notion to test whether $\mathsf{ZF}$ can prove even one nontrivial inconsistency.
A cardinal $\kappa$ is a \emph{Berkeley cardinal} if for every transitive set $M$ with $\kappa\in M$, and for any ordinal $\eta<\kappa$, there is a nontrivial elementary embedding $j:M\to M$ with $\eta<\cp(j)<\kappa$.
In addition to revealing tension between axioms of infinity and the Axiom of Choice, if Berkeley cardinals are consistent with $\mathsf{ZF}$, then the Ultimate $L$ Conjecture is false \cite[Corollary 8.1]{bagaria2019large}.

We answer the determinacy version of this consistency question.
Solovay showed that assuming $\mathsf{AD}$, every subset of $\omega_1$ is constructible from a real, and hence there is a nontrivial elementary embedding from any $\mathsf{ZFC}$ model of height $\omega_1$ to itself with critical point less than $\omega_1$.
Thus $\omega_1$ is ``$\mathsf{ZFC}$-Berkeley'' for structures of height $\omega_1$.
We generalize this to transitive sets of any size less than $\Theta$ which are coded by sets of ordinals.
Moreover, we can ensure that for every club in $\omega_1$ and every such set, there is an embedding with critical point in that club, that is, adapting the terminology of \cite{bagaria2019large},

\begin{theorem}\label{Main}
	Assume ${\sf{AD^+}}$.
	Then $\omega_1$ is club $\Theta$-Berkeley.
\end{theorem}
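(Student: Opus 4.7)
The plan is to produce, for each transitive set $N$ with $\omega_1\subseteq N$ and $|N|<\Theta$, a club $C\subseteq\omega_1$ of Solovay-style order indiscernibles for $(N,\in)$, and then, for each $\eta\in C$, realize the required $j:N\to N$ with $\cp(j)=\eta$ as a shift map on the indiscernibles of $C$ above $\eta$. The first move is a coding reduction. Since $|N|<\Theta$, ${\sf{AD}^+}$ supplies a surjection $\pi:\mathbb{R}\to N$; the induced $\in$-relation yields a set of reals $A$ from which $N$ is $\mathrm{OD}$, so $N$ is interpretable inside $L(A,\mathbb{R})$. By Woodin's theorem that every set of reals admits an $\infty$-Borel code under ${\sf{AD}^+}$, one may work with a canonical definable presentation of $N$, say of the form $L_\gamma[A]$ for some $\gamma<\Theta$.

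The second step is to build the indiscernibles club. Solovay's theorem that under ${\sf{AD}}$ the club filter on $\omega_1$ is a countably complete ultrafilter on sets $\mathrm{OD}$ from a fixed parameter relativizes to the parameter $A$ in the present setting. Iterating this measurability along finite tuples via a Rowbottom-style tree of hulls, one extracts a club $C\subseteq\omega_1$ of order indiscernibles for $(N,\in)$ over any fixed countable initial segment of $N$. Enlarging the Skolem language of $N$ and correspondingly thinning $C$, one arranges $N=\mathrm{Hull}^N(\omega_1\cup C)$. Now given $\eta\in C$, enumerate $C\setminus\eta$ as $(c_\alpha)_{\alpha<\omega_1}$ with $c_0=\eta$; by the indiscernibility of $C$ over parameters below $\eta$, the partial map that is the identity on $\eta$ and sends $c_\alpha\mapsto c_{\alpha+1}$ extends uniquely to an elementary $j:N\to N$. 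Since $j$ fixes every ordinal below $\eta$ and sends $\eta=c_0$ to $c_1>\eta$, one has $\cp(j)=\eta$ exactly.

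The main obstacle lies entirely in the second step. For $N$ constructible from a single real, the indiscernibles club together with the hull condition is essentially Solovay's classical argument. For general $N$ of size less than $\Theta$, however, one needs the full strength of ${\sf{AD}^+}$---the $\infty$-Borel representation of sets of reals, the Coding Lemma, and, likely, the paper's structural result \rthm{char extenders} representing regular Suslin cardinals as cutpoint cardinals in direct limits of hod mice---in order to transfer the classical argument to all such $N$ uniformly. The subtle technical point is to ensure that $N$ really equals the Skolem hull of $\omega_1\cup C$ and that the shift of indiscernibles truly lifts to a self-embedding of all of $N$, not merely of a proper elementary substructure.
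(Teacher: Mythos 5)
Your proposal has a genuine gap that you correctly flagged but that cannot be repaired within your framework: the cardinality mismatch. A transitive $N$ with $\omega_1\subseteq N$ and $|N|<\Theta$ can have cardinality $\omega_2, \omega_3,$ or any cardinal below $\Theta$, so $N$ simply cannot equal $\mathrm{Hull}^N(\omega_1\cup C)$ for any $C\subseteq\omega_1$; the hull of an $\omega_1$-sized set has size $\omega_1$, no matter how the Skolem language is enlarged. Solovay's indiscernibility argument is inherently an $\omega_1$-sized phenomenon: it gives self-embeddings of structures like $L_{\omega_1}[x]$ whose entire content is generated from countable ordinals and a real, but it provides no mechanism for moving elements of $N$ of rank beyond $\omega_1$. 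Even if one tried to pass to Silver indiscernibles of $L[A]$ unbounded in the ordinals and shift those, the resulting $j$ would then be ordinal definable from $A$, which is ruled out by the paper's \rthm{lr optimal} (and for exactly this reason the paper's $j$ is not an element of $L(\bR)$ -- only its restrictions to $({\sf{HOD}}|\gamma)^{L(\bR)}$ for $\gamma<\Theta$ are). There is also an unaddressed step in your coding reduction: being $\mathrm{OD}$ from a set of ordinals $A$ does not yield $N=L_\gamma[A]$; it only places $N$ in some $L[\mathrm{Vopenka}_A]$, which is not the form the indiscernibility argument needs.

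The paper circumvents the cardinality obstruction by changing what is being shifted. Rather than looking for indiscernibles inside $N$, it represents $N$ as an element of a direct-limit structure $\M_\infty(\P,\Sigma)$ (the premouse representation of HOD of a reflected $L_\alpha(\bR)$), where $(\P,\Sigma)$ is a mouse pair whose derived model is $L_\alpha(\bR)$. An extender $E$ on the sequence of the \emph{countable} mouse $\P$ with critical point $\tau$ (the least measurable of $\P$) induces, via full normalization, an elementary map between the direct limits $j:\M_\infty(\P,\Lambda)\to\M_\infty(\Q,\Phi)$ where $\Q=Ult(\P,E)$; this $j$ is shown to fix $N$ (see \rcl{jnn}) and have critical point $\pi_{\P_F,\infty}(\tau)$. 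The club of critical points is harvested from the linear iterates of $\P$ by $E$ and its images. The embedding thus lives ``above'' $N$ rather than being synthesized from term-models inside $N$, and the size of $N$ is irrelevant as long as $N$ sits inside the direct limit.
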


A cardinal $\kappa$ is a $\sf{HOD}$-\emph{Berkeley} cardinal if for all transitive sets $M\in\sf{HOD}$ with $\kappa\in M$, and for every ordinal $\eta<\kappa$, there is a nontrivial elementary embedding $j:M\to M$ with $\eta<\cp(j)<\kappa$ \cite{bagaria2019large}.
In $\sf{ZFC}$, the existence of a $\sf{HOD}$-Berkeley cardinal implies the failure of the $\sf{HOD}$ Conjecture (and hence the Ultimate $L$ conjecture) \cite[Theorem 8.6]{bagaria2019large}.
It is an immediate corollary of Theorem \ref{Main} that in a $\sf{ZFC}$ forcing extension, $\omega_1$ is a $\sf{HOD}$-Berkeley cardinal for structures which are ordinal definable from a real and belong to $H_{\omega_3}$, see Corollary \ref{main corollary}.

Recall that a set of reals $A$ is $\kappa$-\emph{Suslin} if $A$ is the projection of a tree on $\omega\times\kappa$, and $\kappa$ is a \emph{Suslin cardinal} if there is a $\kappa$-Suslin set of reals which is not $\gamma$-Suslin for any $\gamma<\kappa$.
We show that every regular Suslin cardinal $\kappa$ is $\omega$-club $\Theta$-Berkeley, i.e.~for all $\omega$-clubs $C\subseteq\kappa$ and all transitive sets $M$ with $\kappa\in M$ and of size less than $\Theta$, there is a nontrivial elementary embedding $j:M\to M$ with $\cp(j)\in C$.

\begin{theorem}\label{Suslin}
	Assume $\sf{AD}^+$.
	Then every regular Suslin cardinal is $\omega$-club $\Theta$-Berkeley. Thus,  every limit Suslin cardinal is a $\Theta$-Berkeley cardinal.\footnote{This follows because if $\k$ is a singular Suslin cardinal that is a limit of Suslin cardinals then it is a limit of regular Suslin cardinals. See \cite{Jackson}.} 
\end{theorem}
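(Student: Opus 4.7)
The plan is to leverage the cutpoint representation of regular Suslin cardinals supplied by \rthm{char extenders}: a regular Suslin cardinal $\kappa$ arises as a cutpoint cardinal in a direct limit model $\M_\infty$ built from iteration embeddings on countable mice. The critical points of those iteration embeddings accumulate at $\kappa$ in an $\omega$-closed way, and this $\omega$-closure is precisely the source of the $\omega$-club of critical points that the theorem demands.

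For the regular Suslin part, fix $\kappa$, a prescribed $\omega$-club $C\subseteq\kappa$, and a transitive set $N$ with $\kappa\subseteq N$ and $|N|<\Theta$. Under ${\sf AD}^+$, any set of ordinals of size ${<}\Theta$ is coded by an ${\sf OD}$ set of reals (Moschovakis coding together with the tree representations available below $\Theta$), so the first step is to present $N$ as an object definable relative to the directed system of countable mice underlying $\M_\infty$. Next, for each $\eta\in C$, one selects a countable mouse $\N$ in that system whose iteration map $\pi_{\N,\infty}:\N\to\M_\infty$ has critical point $\eta$; this is possible because the set of such critical points is itself $\omega$-club in $\kappa$ and $\omega$-clubs meet. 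Pre-composing $\pi_{\N,\infty}$ with a suitable further iteration of $\N$ at $\eta$, and exploiting the cutpoint property at $\kappa$ to control the behavior of these iterations above $\kappa$, one extracts an elementary self-map of (the relevant hull of) $\M_\infty$ with critical point $\eta$ that fixes the code of $N$. This self-map restricts to the desired $j_\eta:N\to N$.

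For the limit Suslin assertion, let $\kappa$ be a limit Suslin cardinal. By the footnote (Jackson's analysis), $\kappa$ is a supremum of regular Suslin cardinals $\kappa_\alpha<\kappa$. Given $N$ transitive with $\kappa\subseteq N$ and $|N|<\Theta$, we have $\kappa_\alpha\subseteq\kappa\subseteq N$, so we can apply the just-proved $\omega$-club $\Theta$-Berkeley property at each $\kappa_\alpha$ to $N$. This yields embeddings $j:N\to N$ with critical points on an $\omega$-club of $\kappa_\alpha$, hence in particular arbitrarily large below $\kappa_\alpha$. Letting $\alpha$ run, these critical points are unbounded in $\kappa$, giving that $\kappa$ is $\Theta$-Berkeley.

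The principal obstacle I expect is the descent step: converting an iteration-derived self-map of $\M_\infty$ into an elementary self-map of the arbitrary transitive set $N$, uniformly over $\eta\in C$. This requires that the ${\sf OD}$-code of $N$ be preserved by the chosen self-map at every $\eta$, and the cutpoint property of \rthm{char extenders} is what should make this possible, since it prevents any extender used in the construction from disturbing information above $\kappa$ beyond what is already recorded by the embedding on ordinals. Getting this preservation uniformly across the whole prescribed $\omega$-club $C$, rather than just cofinally often, is the technical heart of the argument.
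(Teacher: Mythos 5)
Your proposal captures the correct starting point---\rthm{char extenders} and the cutpoint representation of the Suslin cardinal---and the limit-Suslin reduction at the end is correct and matches the paper's footnote. But the heart of the argument, the mechanism that actually produces an elementary self-embedding of $N$ with a prescribed critical point, is not present, and the sketch you offer in its place would not work as stated. You propose to pre-compose an iteration map $\pi_{\N,\infty}$ with a further iteration of $\N$ and extract a self-map of ``the relevant hull of $\M_\infty$.'' But compositions of iteration embeddings into a single direct limit do not yield self-maps of anything: they all have target $\M_\infty$ and their critical points do not compose into the identity on any set of ordinals. The paper's construction is genuinely different and involves two extra layers of machinery that your proposal omits entirely.

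Concretely, the paper fixes a completely total extender $F\in\vec{E}^\P$ with $\cp(F)=\d_\P$ (the preimage of the Suslin cardinal $\d$ in the countable mouse $\P$), sets $\Q=Ult(\P(F),F)$, and forms the \emph{restricted} direct limits $\M_\infty(\P,\Sigma,F)$ and $\M_\infty(\Q,\Sigma_\Q,F_\Q)$ over iteration trees that \emph{omit} $F$ in the sense of \rdef{omits f}. The self-map is not a map $\M_\infty\to\M_\infty$; it is a map $j$ between these two \emph{different} direct limits, constructed by transferring $\pi_F$ across the direct-limit systems (exactly as in the displayed definition $j(u)=\pi_{\S_F,\infty}^{\Phi}(\pi_{F_\S}(u_\S))$). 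What rescues the situation is \rthm{hod e}: both $\M_\infty(\P,\Sigma,F)|\Theta^W$ and $\M_\infty(\Q,\Sigma_\Q,F_\Q)|\Theta^W$ have the same universe as an appropriate relativized ${\sf{HOD}}|\Theta$, so they both contain $N$, and the argument of \rcl{jnn} (repeated from the $L(\bR)$ warm-up) shows $j(N)=N$ because $N$ is ${\sf{OD}}$-least witnessing the failure. Without the ``omitting-$F$'' directed system, the directedness result \rthm{e system is directed}, and \rthm{hod e}, there is no way to land back on $N$. Your proposal acknowledges this as ``the technical heart'' but offers no mechanism for it; that is the gap.

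A secondary but instructive point: the paper does not need to hit a \emph{prescribed} $\omega$-club $C$ point by point. It runs a contradiction argument---assume the bad set is $\omega$-stationary, take $N$ the ${\sf{OD}}$-least witness---and then \emph{produces} an $\omega$-club of good critical points, namely $\{\cp(E_{\Q,\Sigma_\Q,F_\Q}) : \Q$ a complete $\Sigma$-iterate of $\P\}$. The $\omega$-club-ness of this set comes from the regularity analysis of \rthm{char extenders} (the image of $F$ under the iteration to $\M_\infty$ is what makes $\d$ measurable in $\M_\infty$, and the critical points of these images are exactly the non-measurable points on the main branch below $\d$). So the uniformity you worry about is supplied by a single extender $F$ and its images, not by a choice of mouse per $\eta\in C$.
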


Of course, $\omega_1$ is a regular Suslin cardinal.
What seems to distinguish the arguments for Theorems \ref{Main} and \ref{Suslin} is whether the club or $\omega$-club filter on the cardinal in question is an ultrafilter.
For example, we expect that if $\kappa$ is the largest Suslin cardinal, e.g.~$\utilde{\delta}^2_1$ in $L(\mathbb{R})$, then $\kappa$ is club $\Theta$-Berkeley.

Theorem \ref{Suslin} establishes the existence of \emph{limit $\omega$-club $\Theta$-Berkeley cardinals,} $\omega$-club $\Theta$-Berkeley cardinals which are limits of $\Theta$-Berkeley cardinals.

\begin{corollary}\label{limitclub}
    Assume $\mathsf{AD}^+$.
    Then every regular limit Suslin cardinal is a limit $\omega$-club $\Theta$-Berkeley cardinal.
\end{corollary}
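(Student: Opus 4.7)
The plan is to derive the corollary directly from \rthm{Suslin} combined with the Jackson classification of Suslin cardinals under $\mathsf{AD}^+$. Let $\kappa$ be a regular limit Suslin cardinal. Since $\kappa$ is in particular a regular Suslin cardinal, \rthm{Suslin} already tells us that $\kappa$ is $\omega$-club $\Theta$-Berkeley. Thus the content of the corollary reduces to showing that $\kappa$ is a limit of $\omega$-club $\Theta$-Berkeley cardinals, and by \rthm{Suslin} it suffices to show that $\kappa$ is a limit of \emph{regular} Suslin cardinals.

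To produce these, I would invoke two standard facts from Jackson's analysis of Suslin cardinals (the references in the footnote to \cite{Jackson}): the class of Suslin cardinals is closed below $\Theta$, and every singular Suslin cardinal that is a limit of Suslin cardinals is itself a limit of regular Suslin cardinals. Since $\kappa$ is a limit Suslin cardinal, fix a strictly increasing sequence $\la \lambda_i : i < \cf(\kappa)\ra$ of Suslin cardinals cofinal in $\kappa$. There are two cases. If cofinally many $\lambda_i$ are regular, we are done. Otherwise, eventually every $\lambda_i$ is singular; by passing to a subsequence of limit indices and using closure of the Suslin cardinals, we obtain cofinally many Suslin cardinals below $\kappa$ which are singular and are limits of Suslin cardinals. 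Each such cardinal is, by the Jackson result, a limit of regular Suslin cardinals, and hence regular Suslin cardinals are cofinal below $\kappa$ in this case as well.

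In either case \rthm{Suslin} now supplies an unbounded set of $\omega$-club $\Theta$-Berkeley cardinals below $\kappa$, which together with the fact that $\kappa$ itself is $\omega$-club $\Theta$-Berkeley establishes the corollary. I do not expect any real obstacle here: the only nontrivial input beyond \rthm{Suslin} is the cited structural fact about singular limit Suslin cardinals, which is already acknowledged in the footnote to \rthm{Suslin}.
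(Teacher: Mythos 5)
Your proof is correct and is the intended derivation from Theorem~\ref{Suslin}: under $\mathsf{AD}^+$ the Suslin cardinals are closed below $\Theta$, so a regular limit Suslin cardinal $\kappa$ is a limit of Suslin cardinals that are themselves limits of Suslin cardinals, and the Jackson fact already cited in the footnote to Theorem~\ref{Suslin} then gives cofinally many regular Suslin cardinals below $\kappa$, each $\omega$-club $\Theta$-Berkeley by the theorem. One small tightening: the case split is unnecessary, and ``passing to a subsequence of limit indices'' should really be ``passing to $\omega$-limit points'' so that singularity of the resulting suprema is automatic; more directly, for any $\eta<\kappa$ the supremum $\nu$ of the first $\omega$ Suslin cardinals above $\eta$ is (by closure and $\kappa$ regular) a singular limit Suslin cardinal with $\eta<\nu<\kappa$, and Jackson's result then supplies a regular Suslin cardinal in the interval $(\eta,\nu)$.
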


Recall that for every $n$, the projective ordinal $\utilde\d^1_n $ is the supremum of the lengths of $\utilde{\Delta}^1_n$ prewellorderings of the reals.
The projective ordinals are analogues of $\mathsf{ZFC}$ cardinals in the setting of $\mathsf{AD}$.
We show that the even projective ordinals are $\Theta$-Berkeley.

\begin{theorem}
	Assume $\mathsf{AD}^+$.
	Then for all $n$, $\utilde\d^1_{2n}$ is $\Theta$-Berkeley.
	
\end{theorem}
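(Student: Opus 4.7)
The plan is to reduce to \rthm{Suslin} applied to $\utilde\d^1_{2n-1}$, which under $\sf{AD}^+$ is a regular Suslin cardinal and hence $\omega$-club $\Theta$-Berkeley, and then to push critical points up into the interval $[\utilde\d^1_{2n-1},\utilde\d^1_{2n})$ via an ultrapower step. Fix a transitive $N$ with $\utilde\d^1_{2n}\subseteq N$ and $|N|<\Theta$, and write $\lambda:=\utilde\d^1_{2n-1}$ and $\kappa:=\utilde\d^1_{2n}$. By the classical Kunen--Martin--Moschovakis analysis of the projective ordinals, $\kappa=\lambda^+$ and the natural normal measure $W$ on $\lambda$ (arising under $\sf{AD}^+$ from the $\omega$-cofinal club filter) satisfies $j_W(\lambda)=\kappa$, so every $\alpha<\kappa$ is represented as $[f_\alpha]_W$ for some $f_\alpha:\lambda\to\lambda$.

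For $\alpha<\lambda$, \rthm{Suslin} applied to $\lambda$ already yields $j:N\to N$ with $\cp(j)>\alpha$, indeed with $\cp(j)$ in any prescribed $\omega$-club of $\lambda$. For the harder range $\alpha\in[\lambda,\kappa)$, pick a representative $f=f_\alpha$ and, using that $\lambda$ is $\omega$-club $\Theta$-Berkeley for $N$, choose embeddings $k_\eta:N\to N$ with $\cp(k_\eta)=f(\eta)$ for $\eta$ in a $W$-measure-one set. The product embedding $J:=[\eta\mapsto k_\eta]_W$ is elementary from $\mathrm{Ult}(N,W)$ to itself with critical point $[f]_W=\alpha$ by \L{}o\'s's theorem. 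The final step is to realize $J$ as an embedding of $N$ itself: compose $J$ with the ultrapower map $i_W:N\to\mathrm{Ult}(N,W)$ (whose critical point is $\lambda$) and then with a factor map $\sigma:\mathrm{Ult}(N,W)\to N$, producing $j:=\sigma\circ J\circ i_W:N\to N$ with $\cp(j)$ still equal to $\alpha$. Such a $\sigma$ should be furnished by the direct-limit representation of $\lambda$ as a cutpoint established in \rthm{char extenders}: $\mathrm{Ult}(N,W)$ can be identified with a hull inside a further iterate of the mouse capturing $N$, and the iteration/comparison maps then provide the required factor into $N$.

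The main obstacle, and what distinguishes the even projective ordinals from the (regular) odd ones already covered by \rthm{Suslin}, is precisely this last transfer step: absorbing the ``external'' ultrapower $\mathrm{Ult}(N,W)$ back inside $N$. I expect the resolution to rely essentially on the mouse-theoretic direct-limit machinery developed in the body of the paper, so that the naive ultrapower by $W$ is recognized as an internal iteration of a mouse lying in $N$. For the base case $n=1$ (that is, $\kappa=\omega_2$, explicitly flagged in \rrem{omega2}), the factor map $\sigma$ can be constructed directly from the normal club ultrafilter on $\omega_1$ together with the embeddings produced by \rthm{Main}; the general case should go through by the analogous argument using $W$ on $\utilde\d^1_{2n-1}$ and \rthm{Suslin} in place of \rthm{Main}.
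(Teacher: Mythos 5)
Your proposal has a genuine gap precisely where you flag ``the main obstacle'': the factor map $\sigma:\mathrm{Ult}(N,W)\to N$ does not exist in general for an arbitrary transitive $N$ with $\utilde\d^1_{2n}\subseteq N$. Nothing in \rthm{char extenders} (or anywhere else in the paper) produces an elementary map from an external ultrapower of $N$ \emph{back into} $N$; the cutpoint representation is a statement about the internal structure of $\M_\infty(\P_x,\Sigma_x)$, not about arbitrary transitive sets the paper is trying to embed into themselves. Indeed, the whole point of the Berkeley conclusion is to produce $j:N\to N$ for $N$ about which we have no structural information, so expecting $\mathrm{Ult}(N,W)$ to ``be recognized as an internal iteration of a mouse lying in $N$'' puts the cart before the horse. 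There is also a choice issue: selecting $k_\eta:N\to N$ with $\cp(k_\eta)=f(\eta)$ for $W$-measure-one many $\eta$ requires a uniformization that $\mathsf{AD}^+$ does not automatically supply; and the ultrapower $\mathrm{Ult}(N,W)$ must be computed with functions from $V$, not from $N$, making its relationship to $N$ even murkier.

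The paper's actual route (\rrem{omega2}) is orthogonal to yours and sidesteps all of this. Rather than lifting embeddings from $\utilde\d^1_{2n-1}$ by an external ultrapower, it works entirely inside the direct-limit mouse: for each $x\in\dom(H)$, $\M_\infty(\P_x,\Sigma_x)$ has a cutpoint measurable cardinal in the interval $(\utilde\d^1_{2n-1},\utilde\d^1_{2n})$, and one applies the machinery of \rthm{main theorem} to the Mitchell-order-$0$ extender $F$ at that cutpoint measurable to obtain embeddings $j:N\to N$ with critical points unboundedly below $\pi_{\P_x,\infty}(\cp(F))$. Since $\utilde\d^1_{2n}$ is a limit of such cutpoint measurables (as $x$ varies), one gets critical points unboundedly below $\utilde\d^1_{2n}$, i.e.\ $\Theta$-Berkeleyness. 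No ultrapower of $N$ is ever taken, and no factor map is needed; the extender $E_{\P,\Sigma,F}$ is applied inside the mouse, where the direct-limit and full-normalization technology gives the required coherence. If you want to salvage the flavor of your approach, the right way to ``push past $\utilde\d^1_{2n-1}$'' is to replace the first measurable of $\P_x$ (used for $\omega_1$) with a higher cutpoint measurable, not to ultrapower $N$ itself.
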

\noindent
In particular, $\omega_2$ is $\Theta$-Berkeley.


A few words are in order about how these $\mathsf{AD}^+$ theorems bear on the questions whether $\mathsf{ZF}$ + ``there is a Berkeley cardinal'' or $\mathsf{ZFC}$ + ``there is a $\mathsf{HOD}$-Berkeley cardinal'' are consistent.
Consider the latter question.
Historically, large cardinals witnessed by elementary embeddings have been isolated first and subsequently shown to hold, in their measure formulations and assuming $\mathsf{AD}$, at small cardinals.
Thus $\omega_1$ is measurable, strongly compact, supercompact, and huge, and $\omega_2$ is measurable and has a significant degree of supercompactness.
Presumably, this could have happened in reverse.
Then we would need to see whether $\mathsf{ZF}$ large cardinal notions like Berkeley cardinals can ``survive'' the Axiom of Choice.
Full Berkeley cardinals cannot.
Perhaps $\mathsf{HOD}$-Berkeley cardinals do.
This paper opens the door for that eventuality.



\subsection{Acknowledgements}
The authors thank Gabriel Goldberg for fruitful discussion about choiceless cardinal phenomena under $\mathsf{AD}$ and for raising the question Theorem \ref{lr optimal} addresses.

\section{Preliminaries}

\subsection{Inner model theory}
The proofs of \rthm{main theorem lr} and \rthm{main theorem} require inner model theory. We will use the full normalization technique, and \cite[Theorem 1.4]{MPSC} in particular will play a crucial role. We will also use the HOD analysis, references for which include \cite{HODCoreModel}, \cite[Chapter 8]{OIMT}, \cite{HMMSC} and \cite{SteelCom}. We will only need the HOD analysis in models of the form $L^\Psi(\bR)$, where $\Psi$ is an iteration strategy, and the HOD analysis that we will need is the one that for a given $x\in \bR$, represents ${\sf{HOD}}_{\Psi, x}|\Theta^{L^\Psi(\bR)}$ as a $\Psi$-premouse over $x$. In this regard, the HOD analysis we need is essentially the HOD analysis of $L(\bR)$. 

The following notation will be used throughout. Suppose $\M$ is some fine structural premouse (e.g.~a hybrid premouse, hod premouse or just a pure premouse). We say that a cardinal $\k$ is a cutpoint of $\M$ if there is no extender $E\in \vec{E}^\M$ such that $\cp(E)<\k\leq \lh(E)$. By a theorem of Schlutzenberg (see \cite{zbMATH07628769}), one can remove the condition that $E\in \vec{E}^\M$. 

When we write ``$\k$ is a measurable cardinal of $\M$" or similar expressions, we mean that $\k$ is a measurable cardinal in $\M$ as witnessed by the extender sequence of $\M$. The aforementioned result of Schlutzenberg makes this convention unnecessary, but it is easier to communicate results with it.
 
Given a premouse (or any model with an extender sequence) $\M$ and an $\M$-cardinal $\nu$, we let $o^\M(\nu)=\sup(\{\lh(E): E\in \vec{E}^\M \wedge \cp(E)=\nu\})$. That is, $o^\M(\nu)$ is the Mitchell order of $\nu$. 

Following \cite[Definition 2.2]{OIMT}, for a premouse $\M$ and $E\in \vec{E}^\M$ with $\k=\cp(E)$, we let $\nu(E)=\sup((\k^+)^\M\cup \{\xi+1: \xi$ is a generator of $E\})$. We also let $\pi_E^\M$ be the ultrapower embedding given by $E$. We will often omit $\M$.

If $\M$ is a non-tame premouse such that $\M\models ``$there are no Woodin cardinals," then $\M$ has at most one $\omega_1+1$-iteration strategy, and under $\sf{AD}$, because $\omega_1$ is a measurable cardinal, $\M$ has at most one $\omega_1$-iteration strategy. For more details see \cite{OIMT}.

Suppose $\M$ is a premouse and $\Sigma$ is an iteration strategy for $\M$. If $\N$ is a normal $\Sigma$-iterate of $\M$, then we let $\T^\Sigma_{\M, \N}$ be the normal $\M$-to-$\N$ tree that is according to $\Sigma$, and if the main branch of $\T^\Sigma_{\M, \N}$ does not drop, then we let $\pi^\Sigma_{\M, \N}:\M\rightarrow \N$ be the iteration embedding given by $\T^\Sigma_{\M, \N}$. If $\N$ is a $\Sigma$-iterate of $\M$, then $\Sigma_\N$ is the iteration strategy for $\N$ given by $\Sigma_\N(\U)=\Sigma((\T^\Sigma_{\M, \N})^\frown \U)$. 

In the above situation, we say $\N$ is a complete $\Sigma$-iterate of $\M$ if $\pi_{\M, \N}$ is defined. When discussing direct limit constructions, we will use $\M_\infty(\M, \Sigma)$ for the direct limit of all complete $\Sigma$-iterates of $\M$ and $\pi_{\M, \infty}^\Sigma:\M\rightarrow \M_\infty(\M, \Sigma)$ will be the direct limit embedding. If $\N$ is a complete $\Sigma$-iterate of $\M$ then $\pi^\Sigma_{\N, \infty}:\N\rightarrow \M_\infty(\M, \Sigma)$ is the iteration embedding. 

We will often omit $\Sigma$ from the superscripts in the notation introduced above.  

\subsection{\textbf{Woodin's Derived Model Theorem}}

Assume ${\sf{ZFC-Powerset}}+``\l$ is a limit of Woodin cardinals$"+``\l^+$ exists," and suppose $g\subseteq Coll(\omega, <\l)$ is a generic.  For $\a<\l$, let $g_\a=g\cap Coll(\omega, \a)$. Set $\bR^*=\cup_{\a<\l}\bR^{V[g_\a]}$ and, working in $V(\bR^*)$, let $\Gamma^*$ be the set of those $A\subseteq \bR^*$ such that for some $\a<\l$ and for some $(T, S)\in V[g_\a]$, $V[g_\a]\models ``(T, S)$ are $<\l$-absolutely complementing" and $A=\cup_{\b\in [\a, \l)}(p[T])^{V[g_\b]}$. 

\begin{theorem}[Woodin's ${\sf{Derived\ Model\ Theorem}}$, \cite{St07DMT, St09DMT}] Assume ${\sf{ZFC-Powerset}}+``\l$ is a limit of Woodin cardinals$"+``\l^+$ exists". Suppose $g\subseteq Coll(\omega, <\l)$ is a generic. Then $L(\Gamma^*, \bR^*)\models {\sf{AD}^+}$. 
\end{theorem}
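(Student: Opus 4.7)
The plan is to verify in $L(\Gamma^*,\bR^*)$ the three axioms that constitute $\sf{AD^+}$: dependent choice $\mathrm{DC}_{\bR^*}$, the $\infty$-Borel representability of every set of reals, and ordinal determinacy. The single engine throughout is that any $A\in\Gamma^*$ is witnessed by a pair of $<\l$-absolutely complementing trees $(T,S)\in V[g_\a]$ whose projections remain complementary in every further $V[g_\b]$; the Woodin cardinals cofinal in $\l$ are used to turn this tree representation into winning strategies and into $\infty$-Borel codes.

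The first step is to establish closure properties of $\Gamma^*$. Complementing is trivial ($(T,S)\mapsto(S,T)$); countable operations fold finitely many stage indices $\a_n$ into a single $\a<\l$; real quantifiers are absorbed by enlarging the tree domain by a copy of $\omega$; continuous preimages pull back through the branches. A Wadge/Moschovakis--style induction on definability then gives $\wp(\bR^*)\cap L(\Gamma^*,\bR^*)=\Gamma^*$, so it is enough to verify the three axioms on $\Gamma^*$ itself.

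The heart of the proof is determinacy for each $A\in\Gamma^*$. Fix $(T,S)\in V[g_\a]$ witnessing $A$, and pick a Woodin cardinal $\d>\a$ of $V[g_\a]$. In $V[g_\a]$ I would run the Martin--Steel auxiliary game, where a play in $G(A)$ is paired with a branch through $T$ (or $S$) and the auxiliary payoff is closed; the Woodin cardinal $\d$, via Woodin's genericity iterations, yields a winning strategy $\s$ for one of the players. The crucial point is that $(T,S)$ are homogeneously Suslin in $V[g_\a]$ --- the homogeneity measures are supplied by extenders on the $\M^\#$-sequence below $\d$ --- so $\s$ continues to win against real plays arising at arbitrarily late stages $V[g_\b]$, and hence against the full set of plays from $\bR^*$. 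For the $\infty$-Borel axiom, the same tree $T$ directly codes $A$: recursively, for $s\in T$, $p[T_s]$ has an $\infty$-Borel code of rank bounded by $\lh(T)<\l$, giving an $\infty$-Borel code for $A=p[T]$ in the derived model. Ordinal determinacy for $B\subseteq\l^\omega$ of the form $B=f^{-1}[A]$ (with $A\in\Gamma^*$ and $f:\l^\omega\to\bR^*$ continuous) is handled by the same auxiliary game argument, now played with trees on ordinals instead of on $\omega$, again using the Woodin cardinal above the relevant parameters. Finally, $\mathrm{DC}_{\bR^*}$ descends from the ground model since $\bR^*=\bigcup_{\a<\l}\bR^{V[g_\a]}$ and any dependent choice sequence can be assembled level-by-level inside $V(\bR^*)$.

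The main obstacle is the step from a single stage $V[g_\a]$ to the full derived model in the determinacy argument. A priori, a strategy $\s$ produced in $V[g_\a]$ only handles plays by reals already present at that stage; we need $\s$ to defeat plays by reals that enter at arbitrarily late stages $V[g_\b]$. Overcoming this requires the interplay between absolute complementation of $(T,S)$, the homogeneity measures provided by the Woodin cardinal $\d>\a$, and Woodin's genericity iteration, which converts an arbitrary real $x\in\bR^*$ into a generic over a countable iterate of the auxiliary hull. This hand-off from a fixed stage to the limit $\bR^*$ is the content of the Derived Model Theorem, and it is precisely why $\l$ being a \emph{limit} of Woodin cardinals (so that arbitrarily large Woodins sit above any fixed $\a<\l$) is indispensable.
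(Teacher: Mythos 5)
This statement is not proved in the paper at all: it is Woodin's Derived Model Theorem, quoted as background with citations to Steel's write-ups (\cite{St07DMT, St09DMT}), so there is no internal proof to compare your argument against. Judged on its own terms, your outline reproduces some standard ingredients of the proof but has a genuine gap at its central reduction. You claim that a Wadge/Moschovakis-style induction gives $\powerset(\bR^*)\cap L(\Gamma^*,\bR^*)=\Gamma^*$, and then verify the three $\sf{AD}^+$ axioms only for sets in $\Gamma^*$. That equality is false in general: $\Gamma^*$ turns out to be exactly the Suslin--co-Suslin sets of the derived model, and $L(\Gamma^*,\bR^*)$ typically contains many sets of reals outside $\Gamma^*$ (this is exactly the ``new'' derived model, which can satisfy, e.g., $\sf{AD}_{\bR}$ with non-Suslin sets present). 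The genuinely hard content of the theorem --- proving determinacy, $\infty$-Borel representability, and ordinal determinacy for the sets of reals of the model that are \emph{not} witnessed by $<\l$-absolutely complementing trees --- is precisely what your reduction skips, and it cannot be recovered by closure properties of $\Gamma^*$; it requires Woodin's arguments (reflection into the model, stationary-tower/genericity absoluteness machinery), which occupy most of Steel's notes.

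Two further steps are also asserted rather than proved. First, the passage from ``$(T,S)$ are $<\l$-absolutely complementing in $V[g_\a]$'' to ``$T$ is homogeneously Suslin'' is itself a theorem of Martin--Steel--Woodin (via Martin--Solovay trees and a Woodin cardinal above the trees), not something read off from ``extenders on the $\M^\#$-sequence below $\d$''; and the transfer of a stage-$\a$ winning strategy to plays by reals appearing at later stages uses the countable completeness of the homogeneity measures together with generic absoluteness, which is where the actual work lies, not in the Martin--Steel auxiliary game per se. Second, for the $\infty$-Borel axiom even on $\Gamma^*$, the tree $T$ lives in $V[g_\a]$ and is not obviously a member of $L(\Gamma^*,\bR^*)$, so ``$T$ directly codes $A$'' does not by itself produce an $\infty$-Borel code inside the derived model; and for sets outside $\Gamma^*$ this axiom is again a substantial theorem of Woodin. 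So the proposal should be regarded as an (incomplete) sketch of the easier half of the theorem, not a proof of the statement as cited.
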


 The model $L(\Gamma^*, \bR^*)$ is the derived model of $V$ at $\l$ induced by $g$. We denote it by $D(V, \l, g)$. While $D(V, \l, g)$ is not in $V$, its theory is, and in $V$, we can refer to $D(V, \l, g)$ via the forcing language. 
 \begin{notation}\label{derived model notation} Suppose $\l$ is as above, $X\in V_\l$, $A$ is a $<\l$-uB set and $\phi$ is a formula. We write $V\models \phi^{D(\l)}[X, A]$ if whenever $g\subseteq Coll(\omega, <\l)$ is generic, $D(V, \l, g)\models \phi[X, A_g]$, where $A_g$ is the interpretation of $A$ in $V[g]$.
 \end{notation}

 Suppose $(\P, \Sigma)$ is a mouse pair of some kind and $\P$ has infinitely many Woodin cardinals. Let $\l$ be a limit of Woodin cardinals of $\P$. Then $M$ is a derived model of $(\P, \Sigma)$ at $\lambda$ if there is some genericity iteration\footnote{For more on genericity iterations, see \cite[Chapter 7]{OIMT}.} of $\P$ via $\Sigma$ with last model $\P_\omega$ such that if $\pi:\P\rightarrow \P_\omega$ is the iteration embedding, then $\pi(\l)=\omega_1$ and $M$ is the derived model of $\P_\omega$ at $\omega_1^V$ as computed by some $g\subseteq Coll(\omega, <\omega_1^V)$ which is $\P_\omega$-generic and $(\bR^*)^{\P_\omega[g]}=\bR$.

\section{$\omega_1$ is club $\Theta$-Berkeley}

Before the proof of the main theorem, we present a proof of a special but representative case.
This proof has the advantage of being more accessible while featuring most of the main ideas. 

\textbf{The main ideas.} We present the main idea behind the proofs of \rthm{main theorem lr} and \rthm{main theorem} assuming that $\M_\omega^\#$ exists. Let $\M=\M_\omega^\#$ and let $\d$ be the least Woodin cardinal of $\M$. Let $\P=\M|(\d^+)^\M$, and let $\Sigma$ be the $\omega_1+1$-iteration strategy of $\P$. It is a theorem of Woodin that if $\N$ is the direct limit of all countable iterates of $\P$ via $\Sigma$, then $\pi_{\P, \N}(\d)=\Theta^{L(\bR)}$ and the universe of $\N|\Theta^{L(\bR)}$ is just $({\sf{HOD}}|\Theta)^{L(\bR)}$. Our goal now is to generate a non-trivial embedding
\begin{center}
$j: ({\sf{HOD}}|\Theta)^{L(\bR)}\rightarrow ({\sf{HOD}}|\Theta)^{L(\bR)}$.
\end{center}
\rthm{lr optimal} shows that such an embedding cannot exist in $L(\bR)$, but we can hope to find such an embedding $j$ with the additional property that if $\gg<\Theta^{L(\bR)}$ then $j\rest ({\sf{HOD}}|\gamma)^{L(\bR)}\in L(\bR)$. We obtain such an embedding as follows. Let $E$ be the total Mitchell order 0 extender on the extender sequence of $\P$ with the property that $\cp(E)$ is the least measurable cardinal of $\P$. Set $\tau=\cp(E)$, $\P_0=\P|(\tau^{++})^\P$, $\Q_0=\pi_E(\P_0)$ and $\Q=Ult(\P, E)$. Let $\Lambda$ be the fragment of $\Sigma$ that acts on iteration trees that are above ${\sf{Ord}}\cap \P_0$, and similarly, let $\Phi$ be the fragment of $\Sigma_\Q$ that acts on iterations that are above ${\sf{Ord}}\cap \Q_0$. Let $\R$ be the direct limit of all countable $\Lambda$-iterates of $\P$ and $\S$ be the direct limit of all countable $\Phi$-iterates of $\Q$. Then $\pi_E$ generates an embedding $j^+:\R\rightarrow \S$ such that $j\rest \P_0=\pi_E\rest \P_0$. Moreover, setting $j=j^+\rest ({\sf{HOD}}|\gamma)^{L(\bR)}$, $j$ is as desired. 

\begin{theorem}\label{main theorem lr} 
Assume ${\sf{AD}} + V=L(\bR)$.
Then $\omega_1$ is a club $\Theta$-Berkeley cardinal.
\end{theorem}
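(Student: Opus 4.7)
The plan is to adapt the sketch of ``the main ideas'' paragraph preceding the theorem to the $L(\bR)$ setting, where $\M_\omega^\#$ need not exist but the HOD analysis of $L(\bR)$ provides a hybrid (strategy) premouse $\P$ with iteration strategy $\Sigma$ such that $\M_\infty(\P,\Sigma)$ is $({\sf{HOD}}|\Theta)^{L(\bR)}$; relativizing to a real $x$ yields an analogous representation of $({\sf{HOD}}_x|\Theta)^{L(\bR)}$.

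I would first reduce to building self-embeddings of $({\sf{HOD}}_x|\Theta)^{L(\bR)}$. Any transitive $N$ with $|N|<\Theta$ and $\omega_1\subseteq N$ is coded by a bounded subset of $\Theta$ that is OD from some real $x$, since every set of $L(\bR)$ is OD from a real and Moschovakis coding applies. Hence $N$ is interpretable in $({\sf{HOD}}_x|\Theta)^{L(\bR)}$, and any elementary self-embedding of the latter that fixes this code and $x$ restricts to a self-embedding of $N$ with the same critical point. It therefore suffices to produce, for each $\eta$ in a club of $\omega_1$, an elementary $j_\eta:({\sf{HOD}}_x|\Theta)^{L(\bR)} \to ({\sf{HOD}}_x|\Theta)^{L(\bR)}$ with $\cp(j_\eta)=\eta$.

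Next, following the sketch: let $\tau$ be the least measurable of $\P$ and $E$ a total Mitchell-order-$0$ extender on $\vec{E}^\P$ with $\cp(E)=\tau$; set $\P_0 = \P|(\tau^{++})^\P$, $\Q = \text{Ult}(\P,E)$, and $\Q_0 = \pi_E(\P_0)$. Let $\Lambda$ and $\Phi$ be the fragments of $\Sigma$ and $\Sigma_\Q$ acting on trees above ${\sf{Ord}}\cap \P_0$ and ${\sf{Ord}}\cap \Q_0$, respectively, and form $\R = \M_\infty(\P,\Lambda)$ and $\S = \M_\infty(\Q,\Phi)$. Because $E$ commutes with these iterations above the common base modulo $\pi_E$, the ultrapower map lifts to an elementary $j^+:\R\to\S$, and the key identification $\R|\Theta = \S|\Theta = ({\sf{HOD}}|\Theta)^{L(\bR)}$ follows from the HOD analysis combined with full normalization \cite[Theorem 1.4]{MPSC}. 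This allows $j^+$ to be read as an elementary self-embedding of $({\sf{HOD}}|\Theta)^{L(\bR)}$ whose restriction to each proper initial segment $({\sf{HOD}}|\gamma)^{L(\bR)}$ lies in $L(\bR)$, consistent with \rthm{lr optimal}.

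Finally, to hit every $\eta$ in a club of $\omega_1$, I would vary the base of the construction: replace $\P$ by a countable $\Sigma$-iterate $\P'$ whose image of its least measurable under the direct limit map equals a prescribed $\eta$, and apply the construction above to $\P'$. The set of realizable $\eta$ is closed under countable suprema (by continuity of the direct limit at countable cofinality) and unbounded in $\omega_1$ (by density of iterates of $\P$), hence contains a club, which gives the conclusion since under $\sf{AD}+V=L(\bR)$ the club filter on $\omega_1$ is an ultrafilter. The main obstacle will be the rigorous identification $\R|\Theta = \S|\Theta = ({\sf{HOD}}|\Theta)^{L(\bR)}$ and the verification that $\pi_E$ lifts coherently through both direct limit systems; this is precisely where the HOD analysis of $L(\bR)$ and full normalization \cite{MPSC} do the essential work.
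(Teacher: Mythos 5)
Your proposal misses the central device of the paper's argument: reflection below $\utilde{\delta}^2_1$. You propose to represent $({\sf{HOD}}_x|\Theta)^{L(\bR)}$ as $\M_\infty(\P,\Sigma)$ for a pair $(\P,\Sigma)$ and then lift $\pi_E$ through the two direct-limit systems. But under $V=L(\bR)$ no such pair is available as a set in $L(\bR)$: the iteration strategy $\Sigma$ of any countable structure whose direct limit computes $({\sf{HOD}}|\Theta)^{L(\bR)}$ has Wadge rank at least $\Theta^{L(\bR)}$ and so is a proper class from the point of view of $L(\bR)$. The HOD analysis of $L(\bR)$ does represent ${\sf{HOD}}|\Theta$ as a direct limit, but of a system of suitable premice under $A$-iterability embeddings, not as $\M_\infty(\P,\Sigma)$ for a single mouse pair; the full-normalization and comparison machinery you invoke requires an actual iteration strategy for a pair living in the model. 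The paper's proof instead first minimizes the witness $N$ to be $\mathsf{OD}_{x_0}$-least, uses $L_{\utilde{\delta}^2_1}(\bR)\prec_1 L(\bR)$ together with the Coding Lemma to show $|N|<\utilde{\delta}^2_1$, and then reflects the whole situation into some $L_\alpha(\bR)$ with $\alpha<\utilde{\delta}^2_1$ which \emph{is} the derived model of a pure $x_0$-mouse pair $(\P,\Sigma)\in L(\bR)$ (clauses (2.1)--(2.5) of the proof). Without this reflection your construction has no object to begin with.

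There is a second gap even granting the set-up. You assert that an elementary self-embedding of $({\sf{HOD}}_x|\Theta)^{L(\bR)}$ ``that fixes this code and $x$'' restricts to a self-embedding of $N$, and then reduce to ``producing, for each $\eta$ in a club, an elementary $j_\eta$'' of $({\sf{HOD}}_x|\Theta)^{L(\bR)}$. But an arbitrary elementary $j_\eta$ need not fix the code of $N$, and the $j$ constructed via $\pi_E$ certainly need not a priori. Showing $j(N)=N$ is the content of Claim~\ref{jnn} in the paper, and it is nontrivial: one must express the preimage $N_\P=\{y\in\P:\pi^\Lambda_{\P,\infty}(y)\in N\}$ by a formula over $\P$ involving derived-model truth (this is where minimality of $N$ in the $\mathsf{OD}_{x_0}$-order is used, so that the \emph{same} formula defines $N$ in every iterate), observe that $\pi_E(N_\P)=N_\Q$, and then verify $\pi^\Lambda_{\P,\infty}(N_\P)=N=\pi^\Phi_{\Q,\infty}(N_\Q)$. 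Your proposal does not address this. Finally, a smaller point: the critical points that one gets are the actual (countable) least measurables $\tau_\alpha$ of the linear iterates $\P_\alpha$ of $\P$ by $E$ and its images, not ``the image of the least measurable under the direct limit map'' as you write; the former form the desired club, while the latter are well above $\omega_1$.
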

\begin{proof}
Towards a contradiction, assume not. Fix a transitive $N'$ such that\\\\
(1.1) $\card{N'}<\Theta$, \\
(1.2) $\omega_1\subseteq N'$ and \\
(1.3) the set of $\a<\omega_1$ such that there is no elementary embedding $j: N'\rightarrow N'$ with the property that $\cp(j)=\a$ is stationary in $\omega_1$.\\\\
Let $\phi(u)$ be the formula expressing (1.1)-(1.3).
Thus $\phi[N']$ holds.

Fix a real $x_0$ such that $N'$ is ordinal definable from $x_0$. By minimizing the ordinal parameter, we can find $N$ such that $N$ is the ${\sf{OD}}_{x_0}$-least\footnote{With respect to a natural order on sets that are ordinal definable from $x_0$. We assume that this order $\leq_{{\sf{OD}}, x_0}$ has the following property: For $\a<\b$, $(\leq_{{\sf{OD}}, {x_0}})^{L_\a(\bR)}=(\leq_{{\sf{OD}},{x_0}})^{L_\b(\bR)}\cap L_\a(\bR)$.} $M$ such that $\phi[M]$ holds. 

We observe that $\card{N}<\utilde{\d}^2_1$.\footnote{Recall that $\utilde{\delta}^2_1$ is the supremum of $\utilde{\Delta}^2_1$ prewellorderings of $\bR$. In $L(\bR)$, $\utilde{\delta}^2_1$ can be characterized as the least $\k$ such that $L_\k(\bR)\prec_1 L(\bR)$. See \cite{KoelWoodin}.}
Indeed, because $L_{\utilde{\d}^2_1}(\bR)\prec_1 L(\bR)$, we have some $\a<\utilde{\d}^2_1$ such that $L_\a(\bR)\models {\sf{ZF-Powerset}}$ and such that for some $K\in L_\a(\bR)$, $L_\a(\bR)\models ``K$ is ${\sf{OD}}_{x_0}$ and $\phi[K]$."
Since any function $k: K\rightarrow K$ is essentially a set of ordinals, Moschovakis' Coding Lemma\footnote{See \cite{KoelWoodin} or \cite{Mo09}.} implies that $L(\bR)\models ``K$ is ${\sf{OD}}_{x_0}$ and $\phi[K]$".
Since $N$ was the ${\sf{OD}}_{x_0}$-least, it follows that $N\in L_\a(\bR)$. 

Now let $\a<\utilde{\d}^2_1$ be such that\\\\
(2.1) $L_\a(\bR)\models {\sf{ZF-Replacement}}+``N$ is the ${\sf{OD}}_{x_0}$-least $K$ such that $\phi[K]"$+$``\card{N}<\utilde{\delta}^2_1."$\\
(2.2) $L_\a(\bR)$ is the derived model of some  pair $(\P, \Sigma)$\footnote{Thus $\P$ has $\omega$ Woodin cardinals.} such that $\P$ is an $x_0$-mouse and 
\begin{center}
$({\sf{HOD}}_{x_0}|\Theta)^{L_\a(\bR)}=\M_\infty(\P, \Sigma)|\Theta^{L_\a(\bR)}$.\footnote{See \rthm{woodins thm} and also \cite{MSCR}. By writing such equalities, we mean that the two structures have the same universe.}
\end{center} 
(2.3) Letting $(\d^i_\P: i\leq \omega)$ be the Woodin cardinals of $\P$ and their limit, for some $\P$-successor cutpoint cardinal $\nu<\d^0_\P$, ${\sf{Ord}}\cap N<\pi_{\P, \infty}^\Sigma(\nu)$.\\
(2.4) For every $\P$-successor cutpoint cardinal $\nu<\d^0_\P$, $\Sigma_{\P|\nu}\in L_\a(\bR)$.\footnote{By a theorem of Steel, $\utilde{\d}^2_1$ is the least ${<}\Theta^{L_\a(\bR)}$-strong cardinal of $\M_\infty(\P, \Sigma)$. See \cite[Section 1]{NegRes} for some justifications for this condition.}\\
(2.5) $\Sigma$ has full normalization.\\\\
To obtain $(\P, \Sigma)$ as above, we first let $\a$ be the least satisfying clause (2.1) and then use \cite[Lemma 2.5]{MSCR} to build $(\P, \Sigma)$. (2.5) follows from the results of \cite{SteelCom} and \cite[Theorem 1.4]{MPSC}. Now let $\M=\M_\infty(\P, \Sigma)$. Let $\tau$ be the least measurable cardinal of $\P$ and let $E\in \vec{E}^\P$ be such that\\\\ 
(3.1) $\cp(E)=\tau$ and $E$ is total, and\\
(3.2) $\lh(E)$ is the least among all extenders of $\vec{E}^\P$ that satisfy (3.1).\\\\
Set $\Q=Ult(\P, E)$. We let $\P_0=\P|(\tau^{++})^\P$ and $\Q_0=\pi_E(\P_0)$. Notice that we can view $\P$ as a premouse over $\P_0$ and $\Q$ as a premouse over $\Q_0$. We then let $\Lambda$ be the fragment of $\Sigma$ that acts on iteration trees on $\P$ which are above  $(\tau^{++})^\P$, and let $\Phi$ be the fragment of $\Sigma_\Q$ that acts on iteration trees on $\Q$ which are above  $\pi_E((\tau^{++})^\P)$. We then have that \\\\
(4.1) $\M_\infty(\P, \Lambda)|\Theta^{L_\a(\bR)}=({\sf{HOD}}_{\P_0}|\Theta)^{L_\a(\bR)}$ and $\M_\infty(\Q, \Phi)|\Theta^{L_\a(\bR)}=({\sf{HOD}}_{\Q_0}|\Theta)^{L_\a(\bR)}$.\\\\
We now define an elementary embedding $j_{\P, \Sigma}=_{def}j:\M_\infty(\P, \Lambda)\rightarrow \M_\infty(\Q, \Phi)$ such that $j\rest \P_0=\pi_E\rest \P_0$.

Given $x\in \M_\infty(\P, \Lambda)$, fix some normal $\Lambda$-iterate $\R$ of $\P$ such that for some $y\in \R$, $\pi^{\Lambda_\R}_{\R, \infty}(y)=x$. Let $\T=\T_{\P, \R}$. Let $\U$ be the full normalization of $\T^\frown \{E\}$.\footnote{This is an iteration according to $\Sigma$.}
Clearly, $\U$ starts with $E$ and continues with the minimal $\pi_E$-copy of $\T$. Thus, $\U$ can be written as $\{E\}^\frown \W$, where $\W$ is a normal  iteration  tree on $\Q$ according to $\Phi$. If $\S$ is the last model of $\W$, then $\S=Ult(\R, E)$.
We set $j(x)=\pi_{\S, \infty}^{\Phi_\S}(\pi_E^\R(y))$. 
\begin{claim}
$j(x)$ is independent of the choice of $\R$.\footnote{Note that $j\rest \P_0=\pi_E\rest \P_0$ is immediate.}
\end{claim}
\begin{proof}
Pick another normal $\Lambda$-iterate $\R'$ of $\P$ such that for some $y'\in \R'$, $\pi_{\R', \infty}^{\Lambda_{\R'}}(y')=x$. It then follows from full normalization that we can compare $(\R, \Lambda_{\R})$ and $(\R', \Lambda_{\R'})$ via the least-extender-disagreement process and get some common iterate $(\R'', \Lambda_{\R''})$.\footnote{This comparison is entirely above ${\sf{Ord}}\cap \P_0$. See \rthm{comparing above}.} It then follows that $\pi_{\R, \R''}(y)=\pi_{\R', \R''}(y')$. Set then $y''= \pi_{\R, \R''}(y)$. 

Next, let $\T=\T_{\P, \R}$, $\T'=\T_{\P, \R'}$, $\U=\T_{\R, \R''}$ and $\U'=\T_{\R', \R''}$. Let $\R_E=Ult(\R, E)$, $\R'_E=Ult(\R', E)$ and $\R''_E=Ult(\R'', E)$. Notice that\\\\
(5.1) $\R_E$ is the last model of the full normalization of $\T^\frown \{E\}$,\\
(5.2) $\R'_E$ is the last model of the full normalization of $\T'^\frown \{E\}$,\\
(5.3) $\R''_E$ is the last model of the full normalization of $\U^\frown \{E\}$ and the full normalization of $\U'^\frown \{E\}$,\\
(5.4) $\R_E$ is a $\Phi$-iterate of $\Q$ via some normal tree $\X$ such that the full normalization of $\T^\frown \{E\}$ is $\{E\}^\frown \X$,\\
(5.5) $\R'_E$ is a $\Phi$-iterate of $\Q$ via some normal tree $\X'$ such that the full normalization of $\T'^\frown \{E\}$ is $\{E\}^\frown \X'$,\\
(5.6) letting $\Y$ and $\Y'$ be the iteration trees according to $\Phi_{\R_E}$ and $\Phi_{\R_E'}$, respectively, such that $\{E\}^\frown \Y$ is the full normalization of $\U^\frown\{E\}$ and $\{E\}^\frown \Y'$ is the full normalization of $\U'^\frown\{E\}$, then $\R''_E$ is the last model of both $\Y$ and $\Y'$, and hence, $\R''_E$ is a $\Phi$-iterate of $\Q$ via both $\X^\frown \Y$ and $\X'^\frown \Y'$.\\\\
We want to see that 
\begin{center}
(*)\ \ \ \ $\pi_{\R_E, \infty}^{\Phi_{\R_E}}(\pi_E^\R(y))=\pi_{\R'_E, \infty}^{\Phi_{\R'_E}}(\pi_E^{\R'}(y'))$.
\end{center}
It follows from (5.1)-(5.6) that (here we drop script $\Phi$ to make the formulas readable; all iteration embeddings appearing below are defined using $\Phi$)\\\\
(6.1) $\pi_{\R_E, \infty}(\pi_E^\R(y))=\pi_{\R''_E, \infty}(\pi_{\R_E,\R_E''} (\pi_E^\R(y)))$,\\
(6.2) $\pi_{\R'_E, \infty}(\pi_E^{\R'}(y))=\pi_{\R''_E, \infty}(\pi_{\R'_E,\R_E''} (\pi_E^{\R'}(y')))$,\\
(6.3) $\pi_{\R_E,\R_E''} (\pi_E^\R(y))=\pi_E^{\R''}(\pi_{\R, \R''}(y))$,\\
(6.4) $\pi_{\R'_E,\R_E''} (\pi_E^{\R'}(y'))=\pi_E^{\R''}(\pi_{\R', \R''}(y'))$,\\
(6.5) $\pi_{\R, \R''}(y)=\pi_{\R', \R''}(y')$\footnote{This clause is a consequence of the fact that both $y$ and $y'$ are preimages of $x$.}.\\\\
(*) now easily follows from (6.1)-(6.5). Indeed,
\begin{center}
\begin{align*}
\pi_{\R_E, \infty}(\pi_E^\R(y))&=\pi_{\R''_E, \infty}(\pi_{\R_E,\R_E''} (\pi_E^\R(y)))\\
&=\pi_{\R''_E, \infty}(\pi_E^{\R''}(\pi_{\R, \R''}(y)))\\
&=\pi_{\R''_E, \infty}(\pi_E^{\R''}(\pi_{\R', \R''}(y')))\\
&=\pi_{\R'_E, \infty}(\pi_E^{\R'}(y')).
\end{align*}
\end{center}
\end{proof}
The next claim essentially finishes the proof of \rthm{main theorem lr}.
\begin{claim}\label{jnn} $j(N)=N$.
\end{claim}
\begin{proof}
Working in $\P$, let $N_\P$ be the set of $x\in \P$ such that $\pi_{\P, \infty}^\Lambda(x)\in N$. $N_\P$ is definable in $\P$ by the following formula. Let $(\d^i_\P: i\leq \omega)$ be the Woodin cardinals of $\P$ and their limit. Let $\sigma[u, v, y, c]$ be a formula in the language containing $\{\in, c\}$, where $c$ is a constant for $x_0$, expressing the following:\\\\
(7.1) $v$ is a premouse and $u$ is an $\omega_1$-iteration strategy for $v$,\\
(7.2) $y\in v$,\\
(7.3) if $w$ is the ${\sf{OD}}_c$-least $w'$ such that $\phi[w']$ then $\pi_{v, \infty}^u(x)\in w$.\\\\
We then have that\\\\
(a) $x\in N_\P$ if and only if \\
(a.1) there is a $\P$-successor cutpoint cardinal $\b<\d^0_\P$ such that $x\in N_\P|\b$, and\\
(a.2) whenever $\b>(\tau^{++})^\P$ is a successor cutpoint cardinal of $\P$ such that $x\in \P|\b$, \begin{center}$\P\models (\exists \Psi \sigma[\Psi, \P|\b, x, x_0])^{D(\d^\omega_\P)}$.\end{center}
 It is important to note that the strategy $\Psi$ is just $\Lambda_{\P|\b}$, as $\P|\b$ has a unique iteration strategy. Moreover, since $\b$ is a successor cutpoint cardinal of $\P$, $\pi_{\P, \infty}^\Lambda(x)=\pi^\Psi_{\P|\b, \infty}(x)$.\footnote{For more details on such calculations see \cite[Chapter 3]{HMMSC} and \cite[Chapter 11.1]{SteelCom}.}

Now let $\psi(u, v, w)$ be the formula on the right side of the above equivalence. Then $x\in N_\P$ if and only if $\P\models \psi[x]$.

Notice that $\pi_E(N_\P)=N_\Q$, where $N_\Q$ is such that $x\in N_\Q$ if and only if $\Q\models \psi[x]$. To finish the proof of the claim, we need to show that\\\\
(*) $\pi_{\P, \infty}^\Lambda(N_\P)=N$ and $\pi_{\Q, \infty}^\Phi(N_\Q)=N$. \\\\
We only establish the first equality as the second is very similar.\footnote{This is once again a standard calculation in the theory of hod computations, and it goes back to Woodin's computation of HOD of $L(\bR)$. See \cite[Chapter 8]{OIMT} or \cite[Chapter 4]{HMMSC}.} 

Suppose $x\in \pi_{\P,\infty}^\Lambda(N_\P)$. We want to see that $x\in N$. Let $\S$ be a $\Lambda$-iterate of $\P$ such that $x=\pi_{\S, \infty}^{\Lambda_\S}(y)$ for some $y\in \S$. We then have that $\S\models \psi[y]$. Since we can realize $L_\a(\bR)$ as the derived model of $\S$, we have that $\pi_{\S, \infty}^{\Lambda_\S}(y)\in N$.

Conversely, suppose $x\in N$. Let $(y, \S)$ be such that $\S$ is a $\Lambda$-iterate of $\P$, $y\in \S$, and $\pi_{\S, \infty}^{\Lambda_\S}(y)=x$. Then $\S\models \psi[y]$, which implies that $y\in \pi_{\P, \S}(N_\P)$. Therefore, $x=\pi_{\S, \infty}^{\Lambda_\S}(y)\in \pi_{\P, \infty}^\Lambda(N_\P)$. 
\end{proof}
To finish the proof of \rthm{main theorem lr}, we need to produce a club $C\subseteq \omega_1$ such that for each $\a\in C$, there is an embedding $k: N\rightarrow N$ with $\cp(k)=\a$. Above we have produced an elementary embedding $j_{\P, \Sigma}:N\rightarrow N$ such that $\cp(j_{\P, \Sigma})=\tau$, for $\tau$ the least measurable cardinal of $\P$. Let $(\P_\a: \a<\omega_1)$ be the sequence of linear iterates of $\P$ by $E$ and its images, and let $\tau_\a$ be the least measurable cardinal of $\P_\a$. Then $\cp(j_{\P_\a, \Sigma_{\P_\a}})=\tau_\a$, and since $C=\{\tau_\a: \a<\omega_1\}$ is a club, we get a contradiction to the fact that $\phi[N]$ is true. 
\end{proof}

\section{Good and very-good pointclasses, a review of \cite{ADRUB}.}
We review concepts from coarse descriptive inner model theory used in the proof of \rthm{main theorem}.
Many of the concepts have appeared in \cite{ADRUB} and elsewhere, and many of them are due to Woodin. A reader familiar with them can skip this section and return to it as needed. 


\subsection{Very good pointclasses}
Let $\bR$ be the Baire space.
Following \cite[Chapter 3]{MouseSetWoodin}), we say that $\Gamma$ is a \textit{good pointclass} if
\begin{enumerate}
\item $\Gamma$ is closed under recursive substitution and number quantification,
\item $\Gamma$ is $\omega$-parametrized,\footnote{So there is a set $U^k\subseteq \omega\times \bR^k$ such that $U^k\in \Gamma$, and for every set $A\in \Gamma$, if $A\subseteq \bR^k$, then there is an integer $n$ such that $x\in A\iff (n, x)\in U^k$.}
\item $\Gamma$ has the scale property, and
\item $\Gamma$ is closed under $\exists^\bR$.
\end{enumerate}

Each good pointclass has its associated $C_\Gamma$ operator. For $x\in \bR$, 
\begin{center}
$C_\Gamma(x)=\{ y\in \bR: y$ is $\Gamma$-definable from $x$ and a countable ordinal$\}$.\footnote{This definition is usually used under ${\sf{AD}}$.}
\end{center}
The $C_\Gamma$ operator can be extended to sets in ${\sf{HC}}$ via the category quantifier.\footnote{This is done, for instance, in \cite[Chapter 3]{MouseSetWoodin}.}

 Let $C_\Gamma^\a$ denote the $\a$th iterate of $C_\Gamma$ so that, e.g., $C^2_\Gamma(a)=C_\Gamma(C_\Gamma(a))$. We only need this notion for $\a\leq \omega$. Set $C^\omega_\Gamma(a)=\cup_{n<\omega}C^n_{\Gamma}(a)$. 
 
 Suppose $T$ is the tree of a $\Gamma$-scale. For each $\a<\omega_1$, we let $\k_\a$ be the $\a$th-infinite cardinal of $L[T,a]$. We can then simply set $C^\a_\Gamma(a)=H_{\k_\a}^{L[T, a]}$. Then, using this definition, we have $C_\Gamma(a)=C^1_\Gamma(a)$.
 
Given a transitive $P\models {\sf{ZFC-Replacement}}$, we say $P$ is a \textit{$\Gamma$-Woodin} if for some $\d$,
\begin{enumerate}
\item $P\models ``\d$ is the only Woodin cardinal".
\item $P=C^\omega_\Gamma(P)$,
\item for every $P$-cardinal $\eta<\d$, $C_\Gamma(V^P_\eta)\models ``\eta$ is not a Woodin cardinal".
\end{enumerate}
We let $\d^P$ be the Woodin cardinal of $P$.

A sequence $(A_n: n<\omega)\subseteq \bR^{\omega}$ is a \emph{self-justifying-system} (sjs) if  for each $n\in \omega$, 
\begin{enumerate}
\item there is a sequence $(A_{m_k}: k\in \omega)$ that codes a scale on $A_n$, and
\item there is $m<\omega$ such that $A_n^c=A_m$.
\end{enumerate} 

Let $T_0$ be the theory 
\begin{enumerate}
\item ${\sf{AD^+}}+{\sf{ZF}}-{\sf{Powerset\ Axiom}}$,
\item $``\Theta$ exists,"\footnote{More precisely, ``there is an ordinal which is not the surjective image of $\bR$".} and
\item $V=L_{\Theta^{+}}(C, \bR)$ for some $C\subseteq \bR$.
\end{enumerate}
\begin{definition}\label{vg pointclass} Suppose $\Gamma$ is a good pointclass. Then $\Gamma$ is a \textbf{very good pointclass (vg-pointclass)} if there is a sjs $\vec{A}=(A_n: n\in \omega)$, $\gg<\Theta^{L(\vec{A}, \bR)}$, a $\Sigma_1$-formula $\phi$, and a real $x$ such that $L_\gg(\vec{A}, \bR)$ is the least initial segment of $L(\vec{A}, \bR)$ that satisfies $T_0+\phi(x)$ and $\Gamma=(\Sigma^2_1(\vec{A}))^{L_{\gg}(\vec{A}, \bR))}$. We say $M_\Gamma=_{def}L_\gg(\vec{A}, \bR)$ is the parent of $\Gamma$.
\end{definition}

If $\Gamma$ is a vg-pointclass and $M_\Gamma=L_\gg(\vec{A}, \bR)$ is its parent, then for any countable transitive $a$, $C_\Gamma(a)={\sf{OD}}^{M_\Gamma}(\vec{A}, a)$. 


\subsection{$\Gamma$-excellent pairs}
 Suppose $\Gamma$ is a vg-pointclass. We say that $\vec{B}\subseteq \bR^{\omega}$ is a \textit{weakly} $\Gamma$-\textit{condensing sequence} if 
\begin{enumerate}
\item $B_0$ codes a sjs such that $M_\Gamma=L_\gg(B_0, \bR)$ and $\Gamma=(\Sigma^2_1(B_0))^{L_\gg(B_0, \bR)}$,
\item $B_1=\{ (x, y)\in\bR^2: y\in C_\Gamma(x)\}$, 
\item $B_2=B_1^c$,
\item $B_3$ is any ${\sf{OD}}^{M_\Gamma}(B_0)$ set,\footnote{We will need the freedom to include any ${\sf{OD}}^{M_\Gamma}(B_0)$ set of reals into our condensing sequence.}
\item $(B_{2i+1}: i\in [2, \omega))\subseteq \Gamma$ is a scale on $B_1$,
 \item $(B_{2i}: i\in [2, \omega))\subseteq M_\Gamma$ is a scale on $B_2$, and
 \item for every $i\in [2, \omega)$, $M_\Gamma\models ``B_{2i}$ is ordinal definable from $B_0."$
\end{enumerate}

Suppose $\Gamma$ is a vg-pointclass and $M_\Gamma=L_\gg(\vec{A}, \bR)$ is its parent. Suppose $B\in M_\Gamma\cap \powerset(\bR)$ is ${\sf{OD}}^{M_\Gamma}(\vec{A})$, and suppose $a\in HC$ is a transitive set. Consider the term relation $\tau^a_B$ consisting of pairs $(p, \sigma)$ such that 
\begin{enumerate}
\item $p\in Coll(\omega, a)$,
\item $\sigma\in C_\Gamma(a)$ is a standard $Coll(\omega, a)$-name for a real, and
\item for co-meager many $g\subseteq Coll(\omega, a)$ (in the relevant topology) such that $p\in g$, $\sigma(g)\in B$.\footnote{Here and elsewhere, $\sigma(g)$ is the realization of $\sigma$ by $g$.}
\end{enumerate}
Then because $\tau^a_B$ is ${\sf{OD}}^{M_\Gamma}(\vec{A}, a)$, we have that $\tau^a_B\in C_\Gamma(C_\Gamma(a))$. 
 Given $k\in \omega$, we let $\tau^a_{B, k}=\tau^{C^{k}_\Gamma}_{B, 0}$. Thus, for every $k\in \omega$, $\tau^a_{B, k}\in C^{k+2}_\Gamma(a)$.

We say $\vec{B}$ is a $\Gamma$-\textit{condensing sequence} if it is a weakly $\Gamma$-condensing sequence with the additional property that for any transitive sets $a, b, M\in HC$ such that
\begin{enumerate}
\item $a\in M$ and
\item there is an embedding $\pi: M\rightarrow_{\Sigma_1} C^\omega_\Gamma(b)$ such that $\pi(a)=b$ and for every $i, k\in \omega$, $\tau^b_{B_i, k}\in rng(\pi)$,
\end{enumerate}
$M=C^\omega(a)$ and for any $i, k\in \omega$, $\pi^{-1}(\tau^b_{B_i, k})=\tau^a_{B_i, k}$.
 If $P$ is a $\Gamma$-Woodin and $B\in {\sf{OD}}^{M_\Gamma}(\vec{A})$, then for $k\in \omega$, we let $\tau^P_{B, k}=\tau^{V_{\d^P}^P}_{B, k}$. 
\begin{definition}[{\cite[Definition 1.6]{ADRUB}}] Suppose $\Gamma$ is a vg-pointclass and $M_\Gamma=L_\gg(\vec{A}, \bR)$ is its parent. Suppose $P$ is a $\Gamma$-Woodin and $\Sigma$ is an $\omega_1$-iteration strategy for $P$. Suppose $B\in M_\Gamma\cap \powerset(\bR)$ is ${\sf{OD}}^{M_\Gamma}(\vec{A})$. 
\begin{enumerate}
\item We say $\Sigma$ is a $\Gamma$-fullness preserving strategy for $P$ if whenever $i:P\rightarrow Q$ is an iteration of $P$ via $\Sigma$, $Q$ is a $\Gamma$-Woodin. 
\item Given that $\Sigma$ is $\Gamma$-fullness preserving, we say $\Sigma$ respects $B$ if whenever $i:P\rightarrow Q$ is an iteration of $P$ via $\Sigma$, for every $k$, $i(\tau^P_{B, k})=\tau^Q_{B, k}$.  
\end{enumerate}
\end{definition} 

The following theorem, which probably is originally due to Woodin, is unpublished. For its proof, see the discussion after \cite[Theorem 1.7]{ADRUB}.
\begin{theorem}\label{coarse capturing} Assume ${\sf{AD^+}}$ and suppose $\Gamma$ is a vg-pointclass. Let $M_\Gamma=L_\gg(\vec{A}, \bR)$\footnote{Here $\vec{A}$ is any sjs for which $M_\Gamma=L_\gg(\vec{A}, \bR)$.} be its parent and $A\in {\sf{OD}}(\vec{A})^{M_\Gamma}$.  Then there is a pair $(R, \Psi)$ and a $\Gamma$-condensing sequence $\vec{B}$ such that
\begin{enumerate}
\item $R$ is a $\Gamma$-Woodin,
\item $\Psi$ is a $\Gamma$-fullness preserving $\omega_1$-iteration strategy for $P$,
\item for each $i$, $\Psi$ respects $B_i$,
\item for every $\Psi$-iterate $Q$ of $R$, for every $i\in \omega$ and for every $Q$-generic $g\subseteq Coll(\omega, \d^Q)$, $\tau^Q_i(g)=Q[g]\cap B_i$,
\item for any iteration tree $\T\in \dom(\Psi)$, $\Sigma(\T)=b$ if and only if either
\begin{enumerate}
\item $C_\Gamma(\M(\T))\models ``\d(\T)$ is not a Woodin cardinal" and $b$ is the unique well-founded cofinal branch $c$ of $\T$ such that $C_\Gamma(\M(\T))\in \M^\T_c$, or
\item $C_\Gamma(\M(\T))\models ``\d(\T)$ is a Woodin cardinal" and $b$ is the unique well-founded cofinal branch $c$ of $\T$ such that letting $Q=C^\omega_\Gamma(\M(\T))$, $\M^\T_c=Q$, and for every $i\in \omega$, $\pi^\T_c(\tau^R_{B_i})=\tau^Q_{B_i}$.
\end{enumerate}
\item $\Psi$ respects $A$.
\end{enumerate}
Moreover, for any set $a\in {\sf{HC}}$, there is $(R, \Psi)$ as above such that $a\in R$. 
\end{theorem}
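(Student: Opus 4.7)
The plan is to follow the standard template of Woodin's coarse capturing arguments: build a condensing sequence $\vec{B}$ first, extract $(R,\Psi)$ from a countable hull of $M_\Gamma$, and then verify that the Q-structure prescription in clause~$(5)$ selects a total, $\Gamma$-fullness preserving strategy that automatically respects each $B_i$ and $A$.

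First I would construct $\vec{B}$ by stipulating the first four entries as the theorem's setup dictates ($B_0$ a sjs with $M_\Gamma = L_\gg(B_0,\bR)$ and $\Gamma = (\Sigma^2_1(B_0))^{M_\Gamma}$, $B_1 = \{(x,y) : y \in C_\Gamma(x)\}$, $B_2 = B_1^c$, $B_3 = A$) and then extending to a full sjs by alternately adjoining $\Gamma$-scales on $B_1$ (odd indices) and $M_\Gamma$-scales on $B_2$ (even indices). Each $B_i$ is thereby $\mathrm{OD}^{M_\Gamma}(\vec{A})$, so each term relation $\tau^a_{B_i,k}$ is uniformly $\mathrm{OD}^{M_\Gamma}(\vec{A},a)$ in the parameters $a,i,k$. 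The $\Gamma$-condensing property---that the pullback of $\tau^b_{B_i,k}$ along a $\Sigma_1$-elementary embedding into $C^\omega_\Gamma(b)$ recovers $\tau^a_{B_i,k}$---then follows from this uniform definability together with $\Sigma_1$-reflection into $M_\Gamma$.

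Next I would obtain $(R,\Psi)$ containing the prescribed $a$ by taking a countable $\Sigma_1$-elementary hull of $M_\Gamma$ (or of a suitable derived-model realization of it) containing $a$, $\vec{B}$, and the canonical names for the $\tau^{-}_{B_i,k}$. Transitizing and using that $\Gamma$ is a vg-pointclass---which supplies, inside $M_\Gamma$, a $\Sigma^2_1$-definable Woodin cardinal---one extracts a countable transitive $R$ whose critical cardinal $\d^R$ is Woodin in the $C_\Gamma$-sense with no smaller $R$-cardinal Woodin in that sense, so $R$ is a $\Gamma$-Woodin. The candidate strategy $\Psi$ is then declared directly by the formula in clause~$(5)$: for a limit tree $\T$, set $\Psi(\T)=b$ to be the unique cofinal wellfounded branch whose last model realizes the correct Q-structure of $\M(\T)$---namely $C_\Gamma(\M(\T))$ in the non-Woodin case, or $C^\omega_\Gamma(\M(\T))$ with every $\tau^R_{B_i,k}$ pulled back to $\tau^{Q}_{B_i,k}$ in the Woodin case.

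The main obstacle is verifying that this prescription actually defines a total $\omega_1$-strategy that is $\Gamma$-fullness preserving. Totality at limits reduces to producing a branch whose direct-limit model realizes the prescribed Q-structure with the correct term-relation images; this is the standard Woodin-style genericity iteration, exploiting that $\omega_1$ is measurable under $\sf{AD}^+$ and that the sjs of $\vec{B}$ yields uniform $\Sigma^2_1$-absoluteness of the Q-structure across generic extensions of iterates. Uniqueness of $b$ comes from the rigidity forced by condensation together with the simultaneous respect of all $\tau$-terms: two distinct branches satisfying~$(5)$ would yield two $\Sigma_1$-hull embeddings of $C^\omega_\Gamma(\M(\T))$ into $M_\Gamma$ agreeing on $\mathrm{OD}^{M_\Gamma}(\vec{A})$-parameters, contradicting the condensing property. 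Fullness preservation is then an induction along $\T$, since the preserved term relations certify that every $\Psi$-iterate $Q$ again satisfies the $\Gamma$-Woodin conditions. Finally, clause~$(4)$ is the standard identity $\tau^Q_i(g)=Q[g]\cap B_i$ holding for any $\Gamma$-fullness preserving iterate respecting $B_i$, and clause~$(6)$ is immediate from $A=B_3$.
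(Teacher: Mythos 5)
First, a caveat about the comparison: the paper does not supply its own proof of this theorem. Immediately before the statement, the authors write that it ``probably is originally due to Woodin'' and ``is unpublished,'' and they refer the reader to the discussion after Theorem~1.7 of \cite{ADRUB}. So there is no in-paper argument to match against; I can only evaluate your sketch against what is known of Woodin's argument.

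The main conceptual gap is in how you propose to obtain the pair $(R,\Psi)$. Taking a countable $\Sigma_1$-elementary hull of $M_\Gamma$ and transitizing produces a countable model of a fragment of $\mathsf{AD}^+$, not a countable transitive model of $\mathsf{ZFC}-\mathsf{Replacement}$ with a single $C_\Gamma$-certified Woodin cardinal. $M_\Gamma$ itself has no Woodin cardinal, so there is no Woodin to ``extract'' from a hull; your parenthetical ``or of a suitable derived-model realization'' begs the question, since producing such a realization is exactly where a Woodin cardinal has to come from. In the Woodin/Steel argument, one does not hull $M_\Gamma$ at all: given a countable transitive $a$, one forms $C^\omega_\Gamma(a)$ directly and shows, via a reflection argument exploiting the scale property of $\Gamma$, that $C^\omega_\Gamma(a)$ has an ordinal $\d$ such that $C_\Gamma(V_\eta)\models$ ``$\eta$ is not Woodin'' for $\eta<\d$ but $\d$ is Woodin with respect to the $C_\Gamma$ operator. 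That is the $\Gamma$-Woodin, and the ``Moreover'' clause follows by choosing $a$ to contain the prescribed element. Similarly, totality and uniqueness of the strategy $\Psi$ defined by the clause-(5) prescription is where the real mathematics lives, and your appeal to ``the standard Woodin-style genericity iteration'' and ``$\omega_1$ is measurable'' is far too brief: one must show that for every limit tree $\T$ according to $\Psi$ there is a branch $c$ with $\M^\T_c = C^\omega_\Gamma(\M(\T))$ \emph{and} with the term-relation images $\pi^\T_c(\tau^R_{B_i,k}) = \tau^{\M^\T_c}_{B_i,k}$, which is a nontrivial term-condensation/absoluteness argument, and the uniqueness of $c$ requires a zipper-type argument using that the sjs terms generate the model (two distinct branches into the same $Q$ give distinct embeddings that must nonetheless agree on a generating set). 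Your sketch has the right overall shape — condensing sequence first, Q-structure-guided strategy second — but the decisive steps are either misidentified or waved at.
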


\begin{definition}\label{excellent pair} Suppose $\Gamma$ is a vg-pointclass. Then $(R, \Psi)$ is a $\Gamma$-excellent pair if for some $\Gamma$-condensing sequence $\vec{B}$, $(R, \Psi)$ has properties 1-5 described in \rthm{coarse capturing} as witnessed by $\vec{B}$.
\end{definition}

\subsection{Reflection points}

 Suppose $(P, \Psi)$ is a $\Gamma$-excellent pair. For $n\leq \omega$, we let $\M_n^{\Psi, \#}$ be the minimal active $\Psi$-mouse that has exactly $n$ Woodin cardinals. Under ${\sf{AD}}$, $\M_n^{\Psi, \#}$, as a $\Psi$-mouse, has a unique $\omega_1$-iteration strategy. Letting $\Psi^+_n$ be this iteration strategy, we have that ${\sf{Code}}(\Psi)$ is projective in ${\sf{Code}}(\Psi^+_n)$.\footnote{${\sf{Code}}:(\cup_{n<\omega} {\sf{HC}}^n)\rightarrow \bR$ is a function that codes subsets of ${\sf{HC}}^n$. As is argued in \cite{ADRUB}, $\Psi$ can be interpreted in generic extensions of $\M_1^{\Psi, \#}$ and its iterates via $\Psi^+_n$, which easily implies the claim that ${\sf{Code}}(\Psi)$ is projective in ${\sf{Code}}(\Psi^+_n)$.} 

 Suppose $(P, \Psi)$ is a $\Gamma$-excellent pair. Then $L^\Psi(\bR)$ is the minimal $\Psi$-mouse containing all the ordinals and reals. It can be defined as in \cite{MSCR} and in \cite[Chapter 3.7]{HMMSC}. Because $\bR$ is not well-ordered, the above references build $L^\Psi(\bR)$ relative to $\Psi^+_2$, though in the case of excellent pairs, the same construction would work relative to $\Psi$.\footnote{\cite{SchlT} is the general reference for hybrid mice over the reals.}

Suppose $(P, \Psi)$ is a $\Gamma$-excellent pair. Then $\utilde{\delta}^2_1(\Psi)$ is the least ordinal $\a$ such that $L_\a^\Psi(\bR)\prec^{\bR}_{1} L^\Psi(\bR)$. Here $\prec_{n}^X$ stands for elementarity with respect to $\Sigma_n$-formulas with parameters from $X$. When discussing $L^\Psi(\bR)$, we will omit the superscript $\bR$, as it is part of the language of $L^\Psi(\bR)$ (see \cite[Chapter 2.4]{KoelWoodin}).

\textbf{$\Sigma_1$-reflection for $L^\Psi(\bR)$.}
Suppose $(P, \Psi)$ is a $\Gamma$-excellent pair, $\phi$ is a formula, and $x$ is a real. Then $\b<\utilde{\delta}^2_1(\Psi)$ is a $(T_0, \phi, x)$-\emph{reflection point} if 
\begin{enumerate}
\item $L^\Psi_\b(\bR)\models T_0$,
\item $L^\Psi_\b(\bR)\models \phi[x]$, and
\item $\powerset(\bR)\cap (L^\Psi_{\b+2}(\bR)-L^\Psi_{\b+1}(\bR))\not =\emptyset$.
\end{enumerate} 
For each $(\phi, x)$ such that $L(\bR)\models \phi[x]$, the set of $(T_0, \phi, x)$-reflection points is unbounded below $\utilde{\delta}^2_1(\Psi)$ (see \cite[Chapter 2.4]{KoelWoodin}, \cite{ScalesLR} and \cite{SchlT}).

 Suppose $(P, \Psi)$ is a $\Gamma$-excellent pair. Modifying the terminology of \cite{ScalesLR}, we say $\b$ ends a $(T_0, \Psi)$-gap if clause (1) and (3) above hold. If $\b$ ends a $(T_0, \Psi)$-gap, then we set $\Theta^\b=\Theta^{M_\b}$, $M_\b=L_\b^\Psi(\bR)$, and $\mH_{Y, x}^{\b}=(\H(Y, x))^{M_\b}$.

 It is shown in \cite{ADPlusBook} that for each $\b$ that is a $(T_0, \phi, x)$-reflection point, for any set $Y\in M_\b$, and for any real $x$, $\mH_{\Psi, Y, x}^{\b}=\mH_{\Psi, Y}^\b[x]$.\footnote{One could also simply make this condition part of being a $(T_0, \Psi)$-gap.}

\subsection{Coarse tuples}

 The following definition is essential for the arguments to come.

\begin{definition}\label{capturing a cardinal} Suppose $\nu<\Theta$. Then $(R, \Psi, H, \a)$ is a \textbf{coarse tuple} if
the following conditions hold:
\begin{enumerate}
\item For some very-good pointclass $\Gamma$, $(R, \Psi)$ is a $\Gamma$-excellent pair. 
\item  $H: \bR\rightarrow V$ is a partial function such that $\dom(H)\subseteq\{x\in \bR: R\in L_1[x]\}$.
\item For every $x\in \dom(H)$, setting $H(x)=(\P_x, \Sigma_x)$, $(\P_x, \Sigma_x)$ is a $\Psi$-pure mouse pair over $x$.
\item For every $x$, $\P_x\models {\sf{ZFC}}$ and has exactly $\omega$-Woodin cardinals.
\item $\P$ is $\omega$-small with respect to $\Psi$.\footnote{I.e., there is no $\gg<{\sf{Ord}}\cap \P$ such that $\P|\gg$ is active and has infinitely many Woodin cardinals.}
\item $\a$ is a $(T_0, \Psi, \phi)$-reflection point and for each $x\in \dom(H)$, the derived model of $(\P_x, \Sigma_x)$ is $L_\a^\Psi(\bR)$.\footnote{The meaning of this statement is as follows: For any generic $g\subseteq Coll(\omega, \bR)$, letting $\Q$ be the $\Sigma_x$-iterate of $\P_x$ obtained via the $(g(i): i<\omega)$-genericity iteration of $\P_x$, the derived model of $\Q$ as computed in $\Q(\bR)$ is $L_\a^\Psi(\bR)$. In particular, the derived model calculations are not only independent of the genericity iterations but also the real $x$ used in this calculations. See \cite{DMATM}.}
\item $\M_\infty(\P_x, \Sigma_x)|\Theta^{\a}=\mH_x^\a|\Theta^\a$\footnote{Here we mean that the universe of the $\Psi$-premouse $\M_\infty(\P_x, \Sigma_x)|\Theta^{\a}$ is  $\mH_x^\a|\Theta^\a$.} and $\Theta^\a=\pi_{\P_x, \infty}^{\Sigma_x}(\d)$, where $\d$ is the least Woodin cardinal of $\P_x$. 
\item For any $x\in \dom(H)$ and any $\P$-successor cutpoint cardinal $\b<\d$, where $\d$ is the least Woodin cardinal of $\P_x$, $(\Sigma_x)_{\P_x|\b}\in L_\a^\Psi(\bR)$.
\end{enumerate}
We say that $(R, \Psi, H, \a)$ \textbf{absorbs} $\nu$ if $(R, \Psi, H, \a)$ is a coarse tuple such that  $\nu<(\utilde{\delta}^2_1)^{L_\a^\Psi(\bR)}$.\footnote{The results of \cite[Chapter 8]{OIMT} relativized to $\Psi$ imply that for any $x\in \dom(H)$, $(\utilde{\delta}^2_1)^{L_\a^\Psi(\bR)}$ is the least $<\Theta^\a$-strong cardinal of $\M_\infty(\P_x, \Sigma_x)$.}
\end{definition}

\begin{remark}\label{lr} Assuming $V=L(\bR)$, we could just work with ordinary pure mouse pairs. In this case, $H(x)$ is simply the least initial segment of $\M^\#_\omega(x)$ that has the desired properties. 
\end{remark}

\begin{theorem}\label{woodins thm} Assume ${\sf{AD^+}}$ and suppose $(R, \Psi)$ is a $\Gamma$-excellent pair for some $\Gamma$. Suppose $\nu$ is less than the largest Suslin cardinal of $L^\Psi(\bR)$. Then there is a coarse tuple $(R, \Psi, H, \a)$ absorbing $\nu$. 
\end{theorem}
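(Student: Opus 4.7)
The plan is to obtain $\a$ by $\Sigma_1$-reflection inside $L^\Psi(\bR)$ and then to construct the function $H$ by relativizing the $K^c$-style construction of \cite[Lemma 2.5]{MSCR} to $\Psi$-mice; the remaining HOD-analysis clause will follow from a direct adaptation of the HOD computation of $L(\bR)$ to $L^\Psi(\bR)$ as in \cite[Chapter 8]{OIMT} and \cite[Chapter 4]{HMMSC}. Thus the argument splits naturally into three steps: pick $\a$, build $(\P_x, \Sigma_x)$ uniformly in $x$, then verify the direct-limit-equals-HOD equality and the strategy-reflection condition.

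For the first step, the hypothesis that $\nu$ is strictly below the largest Suslin cardinal of $L^\Psi(\bR)$ is used here. By the $L^\Psi(\bR)$-analog of Steel's scales-in-$L(\bR)$ analysis (\cite{ScalesLR, SchlT, KoelWoodin}), the Suslin cardinals of $L^\Psi(\bR)$ below its largest one are identified with the $\utilde{\delta}^2_1$'s of suitable $L_\b^\Psi(\bR)$ that close off $\Sigma_1$-gaps. In particular, the set of $\b < (\utilde{\delta}^2_1)^{L^\Psi(\bR)}$ that end a $(T_0, \Psi)$-gap and satisfy $\nu < (\utilde{\delta}^2_1)^{L_\b^\Psi(\bR)}$ is unbounded below the largest Suslin cardinal of $L^\Psi(\bR)$. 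I will take $\phi$ to be a $\Sigma_1$ formula asserting ``I am a model of $T_0$ which is the derived model of a $\Psi$-pure mouse pair $(\P, \Sigma)$ over some real, with $\omega$ Woodin cardinals, such that the HOD of this model relative to that real is represented by the direct limit of $\Sigma$-iterates of $\P$,'' and then pick $\a$ to be any $(T_0, \Psi, \phi)$-reflection point above $\nu$ in the sense of Section 4.3.

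For the second step, fix a real $x$ with $R \in L_1[x]$ and run the $\Psi$-guided fully backgrounded $L[\vec{E}, \Psi, x]$-construction inside the least level of $L^\Psi(\bR)$ that sees $L_\a^\Psi(\bR)$ as its derived model. This is essentially the relativization of \cite[Lemma 2.5]{MSCR} from pure mice to $\Psi$-mice; the output is a $\Psi$-pure premouse $\P_x$ over $x$, $\omega$-small with respect to $\Psi$, with exactly $\omega$ Woodin cardinals, whose canonical iteration strategy $\Sigma_x$ is built from $\Psi$ together with Q-structure guidance. By Woodin's Derived Model Theorem applied inside the background universe, the derived model of $(\P_x, \Sigma_x)$ contains all of $\bR$ and is a level of $L^\Psi(\bR)$ satisfying $\phi$; by the choice of the background and the minimality of $\a$ as a $\phi$-reflection point, this derived model is exactly $L_\a^\Psi(\bR)$. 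The mouse set characterization together with $\Gamma$-excellence of $(R, \Psi)$ ensures this can be done uniformly for every real $x$ with $R \in L_1[x]$.

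The third step is the HOD computation. Having $(\P_x, \Sigma_x)$ realized so that its derived model is $L_\a^\Psi(\bR)$, the standard HOD analysis of \cite[Chapter 8]{OIMT} and \cite[Chapter 4]{HMMSC}, applied to $L_\a^\Psi(\bR)$ relative to the parameter $x$, identifies $\mH_x^\a | \Theta^\a$ with the universe of $\M_\infty(\P_x, \Sigma_x)|\Theta^\a$ and sends the image of the least Woodin of $\P_x$ under $\pi_{\P_x, \infty}^{\Sigma_x}$ to $\Theta^\a$. Clause (8) of \rdef{capturing a cardinal} — that $(\Sigma_x)_{\P_x|\b} \in L_\a^\Psi(\bR)$ for every $\P_x$-successor cutpoint cardinal $\b$ below the least Woodin — is then standard: such a $\P_x|\b$ has a unique $\omega_1$-iteration strategy by tameness, that strategy is coded by a set of reals projective in $\Psi$, and $L_\a^\Psi(\bR)$ is closed under this coding because $\a$ ends a $(T_0,\Psi)$-gap that lies far above the relevant projective level. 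The main obstacle is ensuring that the $\Psi$-guided background construction lands precisely at the chosen gap $\a$; this is the delicate calibration between the level at which the mouse operator stabilizes and the level at which $\phi$ first reflects, and is the technical heart of the MSCR-style arguments that we borrow.
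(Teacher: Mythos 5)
Your overall decomposition—choose $\a$ by $\Sigma_1$-reflection, build $(\P_x,\Sigma_x)$ by relativizing the fully backgrounded construction of \cite[Lemma 2.5]{MSCR} to $\Psi$-mice, then invoke the HOD analysis for the direct-limit identity—is the same route the paper's remark sketches, combining \rthm{coarse capturing}, the relativized HOD analysis for clause (7), and MSCR for clause (6). Two points deserve comment.

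First, your choice of $\phi$ in step one (a formula asserting ``I am the derived model of a $\Psi$-pure mouse pair with $\omega$ Woodins whose iterate limit computes my HOD'') is circular in spirit: the existence of such a representation for $L_\a^\Psi(\bR)$ is precisely what step two provides, so it should not be built into the reflecting formula. One only needs $\a$ to end a $(T_0,\Psi)$-gap with $\nu<(\utilde{\d}^2_1)^{L_\a^\Psi(\bR)}$; the MSCR-style construction then produces $(\P_x,\Sigma_x)$ with derived model $L_\a^\Psi(\bR)$ for that $\a$. This is a fixable awkwardness, not a fatal flaw.

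Second, and more seriously, your argument for clause (8) is wrong. You assert that for each $\P_x$-successor cutpoint $\b$ below the least Woodin $\d$, the strategy $(\Sigma_x)_{\P_x|\b}$ ``is coded by a set of reals projective in $\Psi$.'' That is false for all but the smallest $\b$. Since $\pi_{\P_x,\infty}(\d)=\Theta^\a$, the cutpoints $\b<\d$ are sent cofinally into $\Theta^\a$, and the Wadge rank of ${\sf{Code}}((\Sigma_x)_{\P_x|\b})$ in $L_\a^\Psi(\bR)$ grows correspondingly: these strategies traverse essentially the whole Wadge hierarchy of $L_\a^\Psi(\bR)$, far beyond anything projective in ${\sf{Code}}(\Psi)$. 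The content of clause (8) is precisely that all these strategies nonetheless land inside $L_\a^\Psi(\bR)$; this is not a triviality about closure under a fixed projective coding but a fullness property of the backgrounded construction producing $\P_x$—namely, that initial segments below the least Woodin are fully $\Gamma$-captured by the derived model, which is the content of \cite[Section 1]{NegRes}. (It is closely tied to the footnoted fact that $(\utilde{\d}^2_1)^{L_\a^\Psi(\bR)}$ is the least $<\Theta^\a$-strong cardinal of $\M_\infty(\P_x,\Sigma_x)$.) Your argument as written would only cover the cutpoints whose images under $\pi_{\P_x,\infty}$ sit at the bottom of the Wadge hierarchy, and would miss the bulk of the clause.
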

\rthm{woodins thm} can be demonstrated by combining \rthm{coarse capturing}, the hod analysis of $L(\bR)$ relativized to $L^\Psi(\bR)$,\footnote{This is needed to get clause (7) of \rdef{capturing a cardinal}.} and  the results of \cite{MSCR}.\footnote{This is needed to get clause (6) of \rdef{capturing a cardinal}. In particular, see the proof of \cite[Lemma 2.5]{MSCR}. Clause (8) of \rdef{capturing a cardinal} can be established using the arguments of \cite[Section 1]{NegRes}.} 


\begin{remark}\label{changing psi to fine structural one} Suppose $(R, \Psi, H, \a)$ witnesses \rthm{woodins thm}. Let $x\in \dom(H)$, and let $\d$ be the least Woodin cardinal of $\P_x$. Set $\P=\P_x|(\d^{+\omega})^{\P_x}$ and $\Sigma=\Sigma_{\P_x}$. Then it can be shown that some complete $\Sigma$-iterate of $\Q$ of $\P_x$ is such that $(\Q, \Sigma)$ is $\Gamma$-excellent, where letting $\a$ witness clause (6) of \rdef{capturing a cardinal}, $\Gamma=(\Sigma^2_1({\sf{Code}}(\Psi), x))^{L_\a^\Psi(\bR)}$. This is because, by the results of \cite{ScalesLR} and \cite{SchlT}, there is a weakly $\Gamma$-condensing sequence $\vec{A}=(A_i: i<\omega)$ such that for each $i$, $A_i\in {\sf{OD}}_{\Psi, x}^{L_\a^\Psi(\bR)}$. $\Q$ is then a $\Sigma$-iterate of $\P$ such that $\Sigma_\Q$ respects each $A_i$. We can find such a $\Q$ using standard arguments from the hod analysis. See \cite{HODCoreModel}, \cite[Chapter 8]{OIMT} and \cite{HMMSC}.
\end{remark}

\section{Cutpoint Suslin cardinals on a cone}

In order to prove that every regular Suslin cardinal is $\omega$-club $\Theta$-Berkeley, we need to represent Suslin cardinals as \textit{cutpoint} cardinals in various ${\sf{HOD}}$-like models. This topic has been studied in \cite{MPSC} and \cite{jackson2022suslin}. The present method is motivated by a precursor of \cite[Theorem 0.3]{jackson2022suslin}. 

\begin{theorem}\label{char extenders} Assume ${\sf{AD^+}}$ and that $\d$ is a regular Suslin cardinal such that there is a triple $(R, \Psi, H, \a)$ absorbing $\d$. Then letting $H(x)=(\P_x, \Sigma_x)$, for a Turing cone of $x$, $\d$ is a limit of cutpoint cardinals in $\M_\infty(\P_x, \Sigma_x)$, and hence $\delta$ is also a cutpoint cardinal.
\end{theorem}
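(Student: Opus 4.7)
The plan is to combine the absorption hypothesis with the $\sf{HOD}$-analysis of $L_\alpha^\Psi(\bR)$ and the fine-structural identification of Suslin cardinals with cutpoint cardinals in direct-limit models developed in \cite{MPSC} and \cite{jackson2022suslin}. Since $(R,\Psi,H,\alpha)$ absorbs $\delta$, we have $\delta<(\utilde{\delta}^2_1)^{L_\alpha^\Psi(\bR)}$, so $\delta$ is a regular Suslin cardinal of $L_\alpha^\Psi(\bR)$, and all the scale-theoretic machinery available in this model applies. I pass to a Turing cone $C\subseteq\bR$ of reals $x\in\dom(H)$ large enough that clause (7) of \rdef{capturing a cardinal} gives $\M_\infty(\P_x,\Sigma_x)|\Theta^\alpha=\mH_x^\alpha|\Theta^\alpha$, and large enough that the relevant $\sf{OD}_\Psi$-scales witnessing the Suslin-ness of sets up to $\delta$ are coded into $L_1[x]$.

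Fix $x\in C$ and write $\M_\infty=\M_\infty(\P_x,\Sigma_x)$. By clause (7), the extender sequence of $\M_\infty|\Theta^\alpha$ is exactly the extender sequence of $\mH_x^\alpha|\Theta^\alpha$. The proof then has two steps. Step one is to show that the cutpoint cardinals of $\M_\infty$ are unbounded below $\delta$. Under $\sf{AD}^+$, the regular Suslin cardinals of $L_\alpha^\Psi(\bR)$ are $\omega$-closed unbounded below $\delta$ (see \cite{Jackson}), so it suffices to verify that every regular Suslin cardinal $\delta'<\delta$ of $L_\alpha^\Psi(\bR)$ is a cutpoint of $\M_\infty$. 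This is precisely the content of the precursor of \cite[Theorem 0.3]{jackson2022suslin} combined with the full-normalization machinery of \cite[Theorem 1.4]{MPSC}: the extenders on the $\M_\infty$-sequence below $\Theta^\alpha$ are derived, via the hod analysis, from $\sf{OD}_\Psi$-scales on sets of reals, and an extender $E$ on the sequence with $\cp(E)<\delta'\leq\lh(E)$ would allow one to derive a scale on the corresponding set of reals whose norms are bounded strictly below $\delta'$, contradicting the minimality built into $\delta'$ being a Suslin cardinal for that set.

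Step two, deducing that $\delta$ itself is a cutpoint, is then formal. If there were $E\in\vec{E}^{\M_\infty}$ with $\cp(E)<\delta\leq\lh(E)$, pick a cutpoint $\delta'$ produced by step one with $\cp(E)<\delta'<\delta$; then $\cp(E)<\delta'<\delta\leq\lh(E)$, violating cutpointness of $\delta'$. Hence $\delta$ is a cutpoint of $\M_\infty$ on the cone, as required.

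The hard step is step one: making precise the reduction from an overlapping extender in the direct-limit model to a strictly shorter Suslin representation on a fixed $\sf{OD}_\Psi$ set of reals. The substance of this reduction is already present in \cite{MPSC} and \cite{jackson2022suslin} in the pure-mouse setting; my proposal is to verify that it transfers essentially verbatim to the $\Psi$-premouse setting of \rdef{capturing a cardinal}, using that the derivation of extenders from scales on $\sf{OD}_{\Psi,x}$ sets behaves functorially under the hybrid structure, rather than to reprove the combinatorics.
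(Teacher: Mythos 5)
Your step two is fine and matches the paper, but step one has two problems, one structural and one substantive.

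The structural problem is that your proposed decomposition of step one does not even get off the ground in general. You want to show cutpoints are unbounded below $\delta$ by showing every regular Suslin cardinal $\delta'<\delta$ of $L^\Psi_\alpha(\bR)$ is a cutpoint of $\M_\infty$ and then appealing to these being $\omega$-club below $\delta$. But Suslin cardinals strictly below $\delta$ need not be unbounded in $\delta$ at all: the statement is meant to cover, e.g., $\delta=\omega_1$ (and more generally the successor Suslin cardinals $\utilde\delta^1_{2n+1}$), where the only Suslin cardinal below is $\omega$. What the paper actually does is logically tighter. It assumes for contradiction that on a cone there is a single $\kappa<\delta$ with $o^{\M_x}(\kappa)\ge\delta$, i.e.\ a single overlapping extender, and refutes that. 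The passage from ``no overlapping extender below $\delta$'' to ``$\delta$ is a limit of cutpoints'' then comes for free from the regularity of $\delta$: if $o^{\M_x}(\kappa)<\delta$ for every $\kappa<\delta$, the map $\kappa\mapsto\sup\{\lh(E):\cp(E)<\kappa,\ \lh(E)<\delta\}$ is into $\delta$ and its closure points are cutpoints, giving a club of them. No appeal to Suslin cardinals below $\delta$ is needed or available.

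The substantive problem is that the central reduction --- from an overlapping extender $E$ with $\cp(E)<\delta\le\lh(E)$ to a Suslin representation of strictly shorter length --- is not actually carried out; you wave at ``the precursor of \cite[Theorem~0.3]{jackson2022suslin}'' as a black box, but that precursor is essentially the result the paper is establishing, so as written the argument is circular. The paper's proof does this reduction explicitly: it upgrades the overlap to $o^{\M_x}(\kappa)\ge(\delta^+)^{\M_x}$ using that $\delta$ is measurable in $\M_x$ (which follows from regularity of $\delta$), fixes a tree $T$ on $\omega\times\delta$ witnessing $\delta$ is Suslin together with an extender $E$ on the $\M_x$-sequence with $T\in Ult(\M_x,E)$, and then uses the direct-limit system restricted to iterations omitting the overlap interval, together with the ``good'' pairs $(\lambda,s)$ and Lemmas~\ref{p1} and~\ref{p2}, to build a new tree $U$ on $\omega\times\kappa$ with $p[U]=p[T]$. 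This contradicts minimality of $\delta$ for $T$. That concrete construction is exactly what's missing from your proposal, and your framing by way of smaller regular Suslin cardinals both obscures it and excludes the cases where such cardinals are scarce.
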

\begin{proof} 
For each $x\in \dom(H)$, set $\M_x=\M_\infty(\P_x, \Sigma_x)$. We assume towards a contradiction that\\\\
(*) for a Turing cone of $x$, there is $\k<\d$ such that $o^{\M_x}(\k)\geq \d$. \\\\
Because $\d$ is a regular cardinal, we have that for every $x\in \bR$, $\M_x\models ``\d$ is a  measurable cardinal." To see this, assume not. Let $\Q$ be a $\Sigma_x$-iterate of $\P_x$ such that $\d\in \rge(\pi_{\Q, \infty})$, and set $\d_\Q=\pi_{\Q, \infty}^{-1}(\d)$. We then have that $\Q\models ``\d_\Q$ is a regular non-measurable cardinal." But then $\pi_{\Q, \infty}[\d_\Q]$ is cofinal in $\d$, implying that $\cf(\d)=\omega$.  It then follows from (*) that\\\\
(**) for a Turing cone of $x$, there is $\k<\d$ such that $o^{\M_x}(\k)\geq (\d^+)^{\M_x}$. \\\\
Because $\d$ is a Suslin cardinal, we have a tree $T$ on $\omega\times \delta$ such that $p[T]$ is not $\a$-Suslin for any $\a<\d$. Because $(R, \Psi, H, \a)$ absorbs $\d$, we have that for a Turing cone of $x$, $T\in \M_x$. Thus, we have an $x\in \dom(H)$ such that\\\\
(***) there is $\k<\d$ such that $o^{\M_x}(\k)\geq (\d^+)^{\M_x}$ and $T\in \M_x$. \\\\
Let $(\k, \iota)$ be the lexicographically least pair $(\nu, \zeta)$ such that $\nu$ witnesses (***) and, letting $F=\vec{E}^{\M_x}(\zeta)$, $\cp(F)=\nu$ and $T\in Ult(\M_x, F)$.  Thus, $\k$ is a limit of cutpoints of $\M_x$. Let $\Q$ be a $\Sigma_x$-iterate of $\P_x$ such that $(\k, \d, T, E)\in \rge(\pi_{\Q, \infty})$ where $E=\vec{E}^{\M_x}(\iota)$. Set $\Lambda=(\Sigma_x)_\Q$. 
Given a complete $\Lambda$-iterate $\R$ of $\Q$, let $s_\R=_{def}(\k_\R, \d_\R, T_\R, E_\R)\in \R$ be such that 
\begin{center}
$\pi_{\R, \infty}(s_\R)=(\k, \d, T, E)$. 
\end{center}
If $\R$ is a complete $\Lambda$-iterate of $\Q$, let $\R_E=Ult(\R, E_\R)$. Let $(f_\Q, s_\Q)\in \Q$ be such that $s_\Q\in \nu(E_\Q)^{<\omega}$, $f_\Q:[\kappa_\Q]^{\card{s_\Q}}\rightarrow \Q|\k_\Q$, and $\pi_{E_\Q}(f_\Q)(s_\Q)=T_\Q$. Thus, if $\R$ is a complete $\Lambda$-iterate of $\Q$, then $\pi_{E_\R}(f_\R)(s_\R)=T_\R$.

Say $(\l, s)\in \k\times \k^{<\omega}$ is \textit{good} if there is a complete $\Lambda$-iterate $\R$ of $\Q$ such that $(\l, s)=\pi_{\R_E, \infty}(\d_\R, s_\R)$. Suppose $(\l, s)$ is good and $\R$ witnesses it. Then let 
\begin{center}
$T_{\R, \l, s}=\pi_{\R_E, \infty}(T_\R)$.
\end{center}
\begin{lemma} Suppose $(\l, s)$ is good as witnessed by $\R$ and $\R'$. Then $T_{\R, \l, s}=T_{\R', \l, s}$.
\end{lemma}
\begin{proof} Let $\T=\T_{\Q, \R}$ and $\T'=\T_{\Q, \R'}$. Let $\zeta$ be the $\R_E$-successor of $o^{\R_E}(\k_\R)$ and $\zeta'$ be the $\R'$-successor of $o^{\R'_E}(\k_{\R'})$. Let $(\R'', \Phi)$ be a common iterate of $(\R_E|\zeta, \Lambda_{\R_E})$ and $(\R'_E|\zeta', \Lambda_{\R'_E})$. It is enough to show that $\pi_{\R_E|\zeta, \R''}(T_\R)=\pi_{\R'_E|\zeta', \R''}(T_{\R'})$. 

Let now $\Y$ be the result of copying $\T_{\R_E|\zeta, \R''}$ onto $\R_E$ via $id$, $\Y'$ be the result of copying $\T_{\R'_E|\zeta', \R''}$ onto $\R_E'$ via $id$, $\X=\T^\frown \{E_\R\}^\frown \Y$, and $\X'=\T'^\frown\{E_{\R'}\}^\frown \Y'$. Notice that we have\\\\
(1.1) $\pi^\Y\rest \R_E|\zeta=\pi^{\T_{\R_E|\zeta, \R''}}$ and $\pi^{\Y'}\rest \R'_E|\zeta'=\pi^{\T_{\R'_E|\zeta', \R''}}$,\\
(1.2) $\pi_{\R_E|\zeta, \R''}(\k_\R, \d_\R, s_\R)=\pi_{\R'_E|\zeta, \R''}(\k_{\R'}, \d_{\R'}, s_{\R'})=_{def}(\k_0, \d_0, s_0)$,\\
(1.3) $\pi_{\R_E|\zeta, \R''}(T_\R)=\pi^\X(f_\Q)(\pi_{\R_E|\zeta, \R''}(s_\R))=\pi^\X(f_\Q)(s_0)$,\\
(1.4) $\pi_{\R'_E|\zeta', \R''}(T_{\R'})=\pi^{\X'}(f_\Q)(\pi_{\R'_E|\zeta', \R''}(s_{\R'}))=\pi^{\X'}(f_\Q)(s_0)$.\\\\
Let $\U$ be the full normalization of $\X$ and $\U'$ be the full normalization of $\X'$. Notice that if $\S$ is the last model of $\U$, and $\S'$ is the last model of $\U'$, then\\\\
(2.1) $\R''\insegeq \S$ and $\R''\insegeq \S'$,\\
(2.2) $(\Lambda_{\S})_{\R''}=(\Lambda_{\S'})_{\R''}$,\footnote{This follows from full normalization of $\Lambda$.} and\\
(2.3) the least-extender-disagreement comparison of $(\S, \Lambda_\S)$ and $(\S', \Lambda_{\S'})$ is above ${\sf{Ord}}\cap \R''$.\\\\
Let then $(\W, \Lambda_\W)$ be the common iterate of $(\S, \Lambda_\S)$ and $(\S', \Lambda_{\S'})$ obtained via the least-extender-disagreement comparison process. We then have that\\\\
(3.1) $\pi_{\R_E|\zeta, \R''}(T_\R)=\pi_{\Q, \W}(f_\Q)(s_0)$,\\
(3.2) $\pi_{\R'_E|\zeta', \R''}(T_{\R'})=\pi_{\Q, \W}(f_\Q)(s_0)$.\\\\
It then follows from (3.1)-(3.2) that $\pi_{\R_E, \R''}(T_\R)=\pi_{\R', \R''}(T_{\R'})$.
\end{proof}
Thus, $T_{\R, \l, s}$ is independent of $\R$, and so we let $T_{\l, s}=T_{\R, \l, s}$. 

\begin{lemma}\label{p1} For each good $(\l, s)$, $p[T_{\l, s}]\subseteq p[T]$. 
\end{lemma}
\begin{proof} Suppose $(y, h)\in p[T_{\l, s}]$. Let $\R$ witness that $(\l, s)$ is good. Let $\S$ be a complete $\Lambda_{\R_E}$-iterate of $\R_E$ such that $h\subseteq \rge(\pi_{\S, \infty})$. We can further assume that $\T=_{def}\T_{\R_E, \S}$ is based on $\R|\zeta$, where $\zeta$ is the $\R_E$-successor of $o^{\R_E}(\k_\R)$. Let $(\a_n: n\in \omega)$ be such that $h(n)=\pi_{\S, \infty}(\a_n)$. Let $U=\pi_{\R_E, \S}(T_\R)$. Thus if $h'(n)=\a_n$, then $(y, h')\in [U]$. 

Notice that $\{E_\R\}^\frown \T$ is not a normal tree, and its full normalization $\U$ starts with $\T$. Notice also that for each $\a+1<\lh(\T)$, if $E^\T_\a$ used on the main branch of $\T$ at $\b<\lh(\T)$, then $\cp(E_\a^\T)<\pi_{0, \b}^\T(\d_\R)$. This is because $\R_E\models \card{o^{\R_E}(\k_\R)}=\d_\R$, as otherwise there would be another extender $F\in \vec{E}^{\R_E}\cap \vec{E}^\R$ such that $\cp(F)=\k_\R$ and $T_\R\in Ult(\R, F)$, contradicting the minimality of $E_\R$. Let $\a$ be such that $\T=\U\rest \a+1$. Our discussion shows that $\pi^\U_{0, \a}$ is defined. 

Set $\S'=\M_\a^\U$ and $h^*(n)=\pi^{\Lambda_{\S'}}_{\S', \infty}(\a_n)$. We now have that $(y, h')\in [T_{\S'}]$. This is simply because $T_{\S'}=U$. It then follows that $(y, h^*)\in [T]$, and hence $y\in p[T]$.
\end{proof}

\begin{lemma}\label{p2} For each $y\in p[T]$, there is a good $(\l, s)$ such that $y\in p[T_{\l, s}]$. 
\end{lemma}
\begin{proof} Fix $y\in p[T]$, and let $h\in \k^\omega$ be such that $(y, h)\in [T]$. We can find a complete $\Lambda$-iterate $\R$ of $\Q$ such that for each $n$, $h(n)\in \rge(\pi_{\R, \infty})$. Set $h'(n)=\pi^{-1}_{\R, \infty}(h(n))$. Let $(\l, s)=\pi_{\R_E, \infty}(\d_\R, s_\R)$. As $(y, h')\in [T_\R]$, we easily get that $(y, \pi_{\R_E, \infty}[h'])\in [T_{\l, s}]$. Hence, $y\in p[T_{\l, s}]$. 
\end{proof}
We thus have that $p[T]=\bigcup\{ p[T_{\l, s}]: (\l, s)\in \k\times \k^{<\omega}$ and $(\l, s)$ is good$\}$. Consider now the tree $U$ give by: $(u, h)\in U$ if and only if 
\begin{enumerate}
\item if $0\in \dom(h)$, then $h(0)=(\l_0, s_0)$ is good, and
\item if $\dom(u)=\dom(h)=_{def}m>1$, then \begin{center}  $(\la u(0),..., u(m-2)\ra, \la h(1), h(2),..., h(m-1) \ra)\in T_{\l_0, s_0}$.\end{center} 
\end{enumerate}
Then if $(x, h)\in [U]$, then $(x, h')\in [T]$, where $h'(n)=h(n+1)$. Also, if $(x, h)\in [T]$, then for some good $(\l, s)$ and $g\in \l^\omega$, $(x, g)\in [T_{\l, s}]$. Consequently, $(x, g')\in [U]$, where $g'(0)=(\l, s)$ and for $n\geq 1$, $g'(n)=h(n-1)$. We thus have that $p[T]=p[U]$, and as $U$ can be represented as a tree on $\omega\times \k$ and $\k<\d$, we get a contradiction to our assumption that $p[T]$ is not $\a$-Suslin for any $\a<\d$. 
\end{proof}

\section{On $X$-hod analysis}

\begin{definition}\label{completely total}
Suppose $\P$ is a premouse, an lbr premouse, or just some kind of hybrid premouse. Suppose $F\in \vec{E}^\P$. Then $F$ is \textbf{completely total} if $F$ is a total extender of $\P$ and $\nu(F)$ is a regular cardinal of $F$.
\end{definition}
\begin{definition}\label{omits f}
Suppose $\P$ is a premouse, an lbr premouse or just some kind of hybrid premouse. Suppose $\xi<\gg<{\sf{Ord}}\cap \P$, and $\T$ is an iteration tree on $\P$. Then $\T$ \textbf{omits} $(\xi, \gg)$ if 
whenever $\a<\lh(\T)$ is such that $\pi^{\T}_{0, \a}$ is defined, then
\begin{enumerate}
\item $\lh(E_\a^{\T})\not \in (\pi_{0, \a}^\T(\xi), \pi_{0, \a}^\T(\gg))$, and 
\item $\cp(E_\a^{\T_{\P, \Q}})\not \in (\pi_{0, \a}^\T(\xi), \pi_{0, \a}^\T(\gg))$.
\end{enumerate}
Suppose $F\in \vec{E}^\P$ is a completely total extender and $\T$ is an iteration tree on $\P$. Then $\T$ \textbf{omits} $F$ if $\T$ omits the interval $(\nu(F), \lh(F)+1)$.
\end{definition}

If $(\M, \Lambda)$ is a mouse pair and $F\in \vec{E}^\M$ is a completely total extender, we let $\mathcal{F}(\M, \Lambda, F)$\footnote{We can also define the set $\mathcal{F}(\M, \Lambda, (\xi, \gg))$ to be the set of those $\N$ such that $\T_{\M, \N}$ omits $(\xi, \gg)$ and prove most of the results of this section. However, we only need to develop this material for completely total extenders.} be the set of complete $\Lambda$-iterates $\N$ of $\M$  such that $\T_{\N, \M}$ omits $F$. 

\begin{definition}\label{e directed system}
Suppose $(P, \Psi, H, \a)$ is a coarse tuple and $x\in \dom(H)$. Set $H(x)=(\P, \Sigma)$, and suppose $F\in \vec{E}^\P$ is a completely total extender such that $\cp(F)$ is a cutpoint cardinal of $\P$. Let $F^+$, if it exists, be the extender on the extender sequence of $\P$ such that $\cp(F^+)=\cp(F)$, $\lh(F^+)>\lh(F)$, and $\lh(F)$ is a cutpoint of $Ult(\P, F^+)$.
Let $\P(F)=Ult(\P, F^+)$, if $F^+$ is defined, and otherwise $\P(F)=\P$.

Given $\Q, \R\in \mathcal{F}(\P(F), \Sigma, F)$, we set $\Q\leq_{\P, \Sigma, F} \R$ if $\R\in \mathcal{F}(\Q, \Sigma_\Q, \pi_{\P(F), \Q}(F))$.
\end{definition}

\begin{theorem}\label{e system is directed}
Continuing with the set up of \rdef{e directed system}, $\leq_{\P, \Sigma, F}$ is directed. 
\end{theorem}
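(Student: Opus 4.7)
Given $\Q_0, \Q_1 \in \mathcal{F}(\P(F), \Sigma, F)$, the plan is to perform a least-extender-disagreement comparison of $(\Q_0, \Sigma_{\Q_0})$ and $(\Q_1, \Sigma_{\Q_1})$ to produce a common complete $\Sigma$-iterate $\R$, and then to verify that the resulting comparison trees $\T_i : \Q_i \to \R$ ($i = 0, 1$) omit the images $F_i := \pi_{\P(F), \Q_i}(F)$. Once this is done, $\Q_i \leq_{\P, \Sigma, F} \R$ for both $i$, establishing directedness.

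The key structural observation is that the interval $[\cp(F), \lh(F)]$ is sandwiched between two cutpoints of $\P(F)$. The lower endpoint $\kappa := \cp(F)$ is a cutpoint of $\P(F)$ because it is a cutpoint of $\P$ and $\cp(F^+) = \kappa$, while $\lh(F)$ is a cutpoint of $\P(F) = Ult(\P, F^+)$ by the very definition of $F^+$. Because $\T_{\P(F), \Q_i}$ omits $F$, the images $\kappa_i := \pi_{\P(F), \Q_i}(\kappa)$ and $\mu_i := \pi_{\P(F), \Q_i}(\lh(F))$ remain cutpoints of $\Q_i$, and $F_i$ is the unique extender on $\vec{E}^{\Q_i}$ with critical point in $[\kappa_i, \mu_i]$.

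To verify omission, I would proceed by induction on the length of the comparison, splitting the possible disagreement heights into three zones. A disagreement strictly below $\kappa_i$ uses extenders of length at most $\kappa_i \leq \nu(F_i)$, which is safely outside the forbidden interval $(\nu(F_i), \lh(F_i)+1)$. A disagreement strictly above $\mu_i$ uses extenders of critical point exceeding $\mu_i \geq \lh(F_i)$, again safely outside. The only potentially problematic case is a disagreement inside $[\kappa_i, \mu_i]$; but since $F_0$ and $F_1$ are images of the single extender $F$ under iteration trees that themselves omit $F$, the full normalization property of $\Sigma$ (via \cite[Theorem 1.4]{MPSC}) forces $F_0$ and $F_1$ to agree coherently throughout this interval, so no genuine least disagreement can arise there. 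Thus each $\T_i$ stays entirely outside the forbidden interval of $F_i$, as required.

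The main obstacle is making the middle-zone argument rigorous: checking that the least-extender-disagreement process really does skip the interval $[\kappa_i, \mu_i]$ rather than firing an extender whose length or critical point falls inside $(\nu(F_i), \lh(F_i)+1)$. This will require careful use of full normalization and hull condensation for the tail strategies $\Sigma_{\Q_i}$, together with the fact that the trees producing $\Q_0$ and $\Q_1$ from $\P(F)$ never disturb the structure around $F$; both ingredients are exactly what the preliminaries in \rsec{2} and the application of \cite[Theorem 1.4]{MPSC} have been tailored to provide.
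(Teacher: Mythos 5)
Your plan — compare $(\Q_0,\Sigma_{\Q_0})$ and $(\Q_1,\Sigma_{\Q_1})$ by least-extender-disagreement, then verify the comparison trees omit $F_0$ and $F_1$ — is the right starting point, and the cutpoint observation about the endpoints of $[\cp(F),\lh(F)]$ is correct. But the proof has a genuine gap exactly where you flag one, and the paper's argument is a different one from the zonal picture you sketch.

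The three-zone decomposition does not actually give you what you want. First, the zones are not static: Definition~\ref{omits f} forbids the critical point and length of $E_\alpha^{\T_i}$ from lying in the \emph{image} interval $(\pi^{\T_i}_{0,\alpha}(\nu(F_i)),\,\pi^{\T_i}_{0,\alpha}(\lh(F_i)+1))$, which moves as the comparison proceeds, so ``below $\kappa_i$'' and ``above $\mu_i$'' only make sense stage by stage, and it is precisely the propagation of the forbidden window along the main branch that must be tracked. Second, even the ``above $\mu_i$'' zone is not as safe as you assert: being above $\mu_i=\lh(F_i)$ and $\lh(F_i)$ being a cutpoint gives $\cp(E)\geq\lh(F_i)$, not $\cp(E)>\lh(F_i)$, and $\lh(F_i)$ itself lies in the forbidden open interval $(\nu(F_i),\lh(F_i)+1)$. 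Third and most importantly, the middle zone — which you correctly identify as the crux — is simply asserted to be empty by appeal to ``full normalization forces $F_0$ and $F_1$ to agree coherently.'' That is the theorem restated, not a proof: nothing in full normalization by itself rules out a least disagreement whose extender has critical point or length in the interval without an explicit induction.

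The paper proves the theorem by a different route. Rather than arguing directly about the location of disagreements, it reduces (via a footnoted remark) to showing that the single normal tree $\Z=\T_{\P,\S}$ omits $F$, where $\S$ is the comparison coiterate, and then proves this by induction on the stages $(\Z_\alpha^p,\Z_\alpha^q)$ of the full normalization of $\T_{\P,\Q}^\frown\T_{\Q,\S}$ and $\T_{\P,\R}^\frown\T_{\R,\S}$ run in tandem. At a successor stage, the new extender $G$ fed into the inflation of one side is exactly $E_\gamma^{\Z_\alpha^q}$ from the other side, and since $\Z_\alpha^q$ omits $F$ by the induction hypothesis, $G$ cannot itself witness failure of omission; the remaining models of the inflation are ultrapowers of models of $\Z_\alpha^p$ (which also omits $F$) by $G$, and one checks directly that the resulting extenders stay outside the forbidden window. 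Lemma~\ref{tracking a dissagreement} supplies the bookkeeping. This step-by-step control over the minimal inflation is the substitute for your middle-zone claim, and it is where the real work of the theorem lives; the zonal picture you propose would need to be re-derived from something like this inductive analysis anyway.
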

\begin{proof}
We will use the following straightforward lemma.

\begin{lemma}\label{tracking a dissagreement} Suppose $(\P, \Sigma)$ is a mouse pair, and suppose $\T$ and $\U$ are two distinct iteration trees on $\P$ according to $\Sigma$ with last models $\Q$ and $\R$, respectively. Suppose further that $\gg$ is the least such that $\Q|\gg=\R|\gg$ and $\Q||\gg\not =\R||\gg$, and $\xi$ is such that $\T\rest \xi=\U\rest \xi$, but $\T\rest \xi+1\not =\U\rest \xi+1$. Assume $\lh(E_\xi^\T)<\lh(E_\xi^\U)$. Then  $\gg\in \dom(\vec{E}^\Q)\cup \dom(\vec{E}^\R)$, $\gg\not \in \vec{E}^\Q$, and $\vec{E}^\R(\gg)=E_\xi^\T$.
\end{lemma}

Fix $\Q, \R\in \mathcal{F}(\P, \Sigma, F)$, and let $\T=\T^\Sigma_{\P, \Q}$ and $\U=\T^\Sigma_{\P, \R}$. It follows from full normalization that the least-extender-disagreement comparison between $(\Q, \Sigma_\Q)$ and $(\R, \Sigma_\R)$ produces a common iterate $(\S, \Sigma_\S)$. We want to see that $\S\in \mathcal{F}(\P, \Sigma, F)$, which amounts to showing that $\Z=_{def}\T^\Sigma_{\P, \S}$ omits $F$.\footnote{In fact, we want to show that $\T_{\Q, \S}$ omits $\pi_{\P, \Q}(F)$ and $\T_{\R, \S}$ omits $\pi_{\P, \R}(F)$. These statements follow from the fact that $\S\in \mathcal{F}(\P, \Sigma, F)$. Indeed, if for example $\T_{\Q, \S}$ does not omit $\pi_{\P, \Q}(F)$, then the full normalization process described below would show that $\T_{\P, \S}$ does not omit $F$.} Set $\X=\T_{\Q, \S}$ and $\Y=\T_{\R, \S}$. Then $\Z$ is the full normalization of $\T^\frown \X$. 

We now assume that $\X$ and $\Y$ were built by allowing padding, so that $\lh(\X)=\lh(\Y)$, and our strategy is to analyze the full normalization process that produces $\Z$ out of $(\T, \X)$ and $(\T, \Y)$. We review some facts about the normalization process, and we do this for $(\T, \X)$. 

Recall that the full normalization process for $\T^\frown \X$ produces iteration trees $(\Z_\a: \a<\lh(\X))$ on $\P$, and $\Z$ is simply $\Z_{\lh(\X)-1}$ (e.g. see \cite{schlutzenberg2021normalization} or \cite{siskind2022normalization}). The sequence satisfies the following conditions.\\\\
(1.1) The last model of each $\Z_\a$ is $\M_\a^\X$. \\
(1.2) For $\a+1<\lh(\X)$, $\Z_{\a+1}$ is obtained by letting $\b$ be the $\X$-predecessor of $\a+1$, and \textit{minimally inflating} $\Z_\b$ by $E_\a^\X$. More precisely, letting $\gg_0$ be the least $\gg$ such that $E_\a^\X\in \vec{E}^{\M_\gg^{\Z_\a}}$ and $\gg_1$ be the least $\gg$ such that $\lh(E_\gg^{\Z_\b})>\cp(E_\a^\X)$, $\Z_{\a+1}\rest \gg_0+1=\Z_\a\rest \gg_0+1$, and for $\iota>0$ such that $\gg_0+\iota<\lh(\Z_{\a+1})$, $\M_{\gg_0+\iota}^{\Z_{\a+1}}=Ult(\M_{\gg_1+\iota-1}^{\Z_\b}, E_\a^\X)$. Also, for $\iota<\gg_0$, $E_\iota^{\Z_{\a+1}}=E_\iota^{\Z_\a}$, $E_{\gg_0}^{\Z_\a}=E_\a^\X$ and for $\iota>0$ such that $\gg_0+\iota<\lh(\Z_{\a+1})$, $E_{\gg_0+\iota}^{\Z_{\a+1}}$ is the last extender of $Ult(\M_\iota^{\Z_\b}||\lh(E_\iota^{\Z_\b}), E_\a^\X)$.\\
(1.3) Clause (1.2) above describes a natural embedding $\pi_{\b, \a+1}: \Z_\b\rightarrow \Z_{\a+1}$, a \textit{tree embedding}. If now $\a<\lh(\X)$ is a limit ordinal, then $\Z_\a$ is obtained as the direct limit of the system $(\Z_\b, \pi_{\b, \b'}: \b<\b', \b\in [0, \a)_\X, \b'\in [0, \a)_\X)$.\\\\
Now set $p=(\T, \X)$ and $q=(\U, \Y)$, and let $(\Z_\a^p, \Z_\a^q: \a<\lh(\X))$ be the two sequences produced by the respective normalization processes. To show that $\Z$ omits $F$, we inductively show that for $\a<\lh(\X)$, $\Z_\a^p$ and $\Z_\a^q$ omit $F$, and a close examination shows that the limit case is trivial. 

We now examine the successor stage of the induction. Suppose $\a+1<\lh(\X)$ is such that $\Z_\a^p$ and $\Z_\a^q$ omit $F$. We want to see that $\Z_{\a+1}^p$ and $\Z_{\a+1}^q$ also omit $F$. Let $\gg$ be such that $\Z_\a^p\rest \gg=\Z_\a^q\rest \gg$, $\gg+1\leq \max(\lh(\Z_\a^p), \lh(\Z_\a^q))$, and $E_\gg^{\Z_\a^p}\not = E_\gg^{\Z_\a^q}$. Assume without loss of generality that $\lh(E_\gg^{\Z_\a^q})<\lh(E_\gg^{\Z_\a^p})$. In this case, setting $G=E_\gg^{\Z_\a^q}$, we have that $\Z_{\a+1}^q=\Z_{\a}^q$ and $\Z_{\a+1}^p$ is the full normalization of $(\Z_{\a}^p)^{\frown}\{G\}$. 

Notice that since $G=E_\gg^{\Z_\a^q}$ and $\Z_\a^q$ omits $F$, $G$ cannot witness that $\Z_{\a+1}^p$ does not omit $F$. Also, because $\Z_{\a+1}^p\rest \gg+1=\Z_{\a}^p\rest \gg+1$, we have that $\Z_{\a+1}^p\rest \gg+2$ omits $F$. 

Fix some $\iota>0$, and let $\xi\leq \gg$ be the predecessor of $\gg+1$ in $\Z_{\a+1}^p$. Then $\M_{\gg+\iota}^{\Z_{\a+1}^p}=Ult(\M_{\xi+\iota-1}^{\Z_\a^p}, G)$, and $E_{\gg+\iota}^{\Z_{\a+1}^p}$ is the last extender of $Ult(\M_{\xi+\iota-1}||\lh(E_{\xi+\iota-1}^{\Z_\a^p}), G)$. Because $\Z_\a^p$ omits $F$, it is now straightforward to verify that $\Z_{\a+1}^p\rest \gg+\iota+1$ omits $F$.
\end{proof}

The preceding proof can be modified to show the following corollary.

\begin{corollary}\label{comparing above} Suppose $(\P, \Sigma)$ is a mouse pair and $\eta<{\sf{Ord}}\cap \P$. Suppose $\Q$ and $\R$ are two $\Sigma$-iterates of $\P$ such that both $\T_{\P, \Q}$ and $\T_{\P, \R}$ are strictly above $\eta$. Then the least-extender-comparison of $(\Q, \Sigma_\Q)$ and $(\R, \Sigma_\R)$ produces iteration trees that are strictly above $\eta$.
\end{corollary}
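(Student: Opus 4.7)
The plan is to mimic the inductive argument of \rthm{e system is directed}, replacing the invariant ``$\T$ omits $F$'' with ``$\T$ is strictly above $\eta$'' (i.e.\ every extender used has critical point $>\eta$). Let $\S$ be the common iterate produced by the least-extender-disagreement comparison of $(\Q,\Sigma_\Q)$ and $(\R,\Sigma_\R)$, and set $\X=\T_{\Q,\S}$, $\Y=\T_{\R,\S}$, $\T=\T_{\P,\Q}$, and $\U=\T_{\P,\R}$. Put $p=(\T,\X)$ and $q=(\U,\Y)$, and let $(\Z^p_\a : \a<\lh(\X))$ and $(\Z^q_\a : \a<\lh(\Y))$ be the full-normalization sequences produced exactly as in (1.1)--(1.3) of the proof of \rthm{e system is directed}.

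By induction on $\a$ I aim to show that each $\Z^p_\a$ and $\Z^q_\a$ is strictly above $\eta$. The base case is the hypothesis on $\T$ and $\U$, and limit stages follow from the direct-limit construction of (1.3). For the successor step, assume the inductive hypothesis at $\a$ and let $G$ be the least-extender disagreement between the two normalized trees, say $G=E^{\Z^q_\a}_\gamma$ with $\lh(E^{\Z^q_\a}_\gamma)<\lh(E^{\Z^p_\a}_\gamma)$ (the symmetric case is analogous). Since both $\Z^p_\a$ and $\Z^q_\a$ are strictly above $\eta$ and share the common root $\P$, their last models agree with $\P$ through level $\eta$, so the first disagreement lies strictly above $\eta$; in particular $\cp(G)>\eta$. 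Repeating the minimal-inflation computation of \rthm{e system is directed} verbatim, every extender appearing in $\Z^p_{\a+1}$ is either already in $\Z^p_\a$, or equals $G$, or is the ultrapower image by $G$ of an extender of $\Z^p_\a$; in each case its critical point is $>\eta$, so $\Z^p_{\a+1}$ is strictly above $\eta$. The argument for $\Z^q_{\a+1}$ is symmetric.

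Taking $\a$ cofinal in $\lh(\X)$, the final normalized tree $\Z=\T_{\P,\S}$ is strictly above $\eta$. By inspection of the normalization procedure, the extenders used in $\X$ (respectively $\Y$) are in correspondence with extenders appearing in $\Z$ beyond the initial $\T$-part (respectively $\U$-part), and this correspondence preserves critical points. Hence $\X$ and $\Y$ are strictly above $\eta$, as required.

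The main obstacle I anticipate is establishing $\cp(G)>\eta$ cleanly at the successor step: one needs to argue that the agreement of the two normalized trees with $\P$ through level $\eta$, inherited from ``strictly above $\eta$,'' genuinely precludes a disagreement extender whose critical point drops to $\leq\eta$. In the intended applications of the corollary (e.g.\ the comparison in the proof of \rcl{jnn}), $\eta$ is a cutpoint cardinal of $\P$, which cleanly excludes edge cases arising from extenders of $\vec{E}^\P$ with $\cp\leq\eta<\lh$.
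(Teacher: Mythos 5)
Your overall plan—replacing the invariant ``$\T$ omits $F$'' by ``$\T$ is strictly above $\eta$'' and running the same induction along the two full-normalization sequences—is exactly the modification the paper has in mind. But the justification you give at the crucial successor step has a gap, and you have correctly sensed it yourself.

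You argue that since $\Z^p_\a$ and $\Z^q_\a$ agree with $\P$ through level $\eta$, the least disagreement $G$ has $\cp(G)>\eta$. The agreement only gives $\lh(G)>\eta$; it does not by itself rule out $\cp(G)\leq\eta$ (this is precisely the edge case you flag in your final paragraph, and you are right that agreement alone does not settle it). However, the correct argument is available and is simpler; it is verbatim the argument the paper gives in the proof of \rthm{e system is directed} for the omits-$F$ version. Namely, $G=E_\gg^{\Z^q_\a}$ is literally one of the extenders appearing in the normalized tree $\Z^q_\a$, and by the induction hypothesis every extender of $\Z^q_\a$ has critical point $>\eta$. Hence $\cp(G)>\eta$, with no appeal to agreement at all and no cutpoint assumption on $\eta$. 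The rest of the successor step—that the extenders $E^{\Z^p_{\a+1}}_{\gg+\iota}$ are ultrapower images by $G$ of extenders of $\Z^p_\a$, hence have critical point $\geq$ the original one and so $>\eta$—is fine.

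Once the successor step is fixed in this way, your final paragraph (deducing that $\X$ and $\Y$ are above $\eta$ from $\Z$ being above $\eta$ via a claimed critical-point-preserving correspondence) becomes unnecessary and should be dropped: the extenders $E_\a^\X$ are, by \rlem{tracking a dissagreement}, exactly the disagreement extenders $G$ treated in the induction, so the induction itself already shows that every extender of $\X$ (and symmetrically of $\Y$) has critical point $>\eta$. This avoids the hand-waving about the normalization procedure ``preserving critical points,'' which is not quite right as stated since normalization replaces the $\X$-extenders by their minimal inflations rather than carrying them into $\Z$ unchanged.

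Finally, your closing worry that the corollary might require $\eta$ to be a cutpoint of $\P$ is unfounded once the successor step is argued correctly: the key fact is that $G$ comes from the other normalized tree, not from the extender sequence of a model on the main branch, so extenders of $\vec{E}^\P$ with $\cp\leq\eta<\lh$ never enter the picture.
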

Suppose $(P, \Psi, H, \a)$ is a coarse tuple and $x\in \dom(H)$. Set $(\P, \Sigma)=(\P_x, \Sigma_x)$. Let $(\d^{i}_\P: i\leq \omega)$ be the sequence of Woodin cardinals of $\P$ and their limit. If $\Q$ is a complete $\Sigma$-iterate of $\P$, then we write $\d^i_\Q=\pi_{\P, \Q}(\d^i)$. 

Suppose $\mu<\d^0_\P$ is a measurable cutpoint of $\P$ such that $\mu$ is below the least $<\d_0$-strong cardinal of $\P$ and $F\in \vec{E}^\P$ is a completely total extender with $\cp(F)=\mu$. Set $\P_F=Ult(\P, F)$, and let $\k_{\P, \Sigma, F}=\pi_{\P_F, \infty}(\mu)$ and  $\tau_{\P, \Sigma, F}=o^{\M_\infty(\P, \Sigma)}(\k_{\P, \Sigma, F})$. Let $\T_{\P, \Sigma, F}=_{def}\T$ be the least initial segment of $\T_{\P, \M_\infty(\P, \Sigma)}$ such that if $\R$ is the last model of $\T$, then $\R|\tau_{\P, \Sigma, F}=\M_\infty(\P, \Sigma)|\tau_{\P, \Sigma, F}$.
We then set $E_{\P, \Sigma, F}=_{def}E=\pi^\T(F)$,\footnote{$\pi^\T$ is defined because $F$ is completely total and $\T$ is based on $\P|\nu(F)$.} $X_{\P, \Sigma, F}=\R||\lh(E)=_{def} X$, and $\mH(\P, \Sigma, F)=({\sf{HOD}}_{\Psi,X}|\Theta)^W$.

\begin{theorem}\label{hod e}  In the above set up, setting $W=L_\a^\Psi(\bR)$, 
\begin{center}
$\M_\infty(\P, \Sigma, F)|\Theta^W=\mH(\P, \Sigma, F)|\Theta^W$.\footnote{As is usual, here we mean that the universes of the structures are the same.} 
\end{center}
\end{theorem}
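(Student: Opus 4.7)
The plan is to establish the two inclusions separately, following the template of the HOD analysis in \cite[Chapter 8]{OIMT} and \cite[Chapter 4]{HMMSC}, relativized to $\Psi$ and the parameter $X = X_{\P, \Sigma, F}$. The new ingredient relative to the standard HOD computation is that we use the restricted directed system of $F$-omitting iterates, and we must check that the matching parameter on the HOD side is exactly $X$.

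First I would show $\M_\infty(\P, \Sigma, F)|\Theta^W \subseteq \mH(\P, \Sigma, F)|\Theta^W$. Working inside $W$, the system $(\mathcal{F}(\P(F), \Sigma, F), \leq_{\P, \Sigma, F})$ is directed by \rthm{e system is directed}. Clause (8) of \rdef{capturing a cardinal} gives $\Sigma_{\P|\beta}\in W$ for every $\P$-successor cutpoint cardinal $\beta<\d^0_\P$, so all complete $\Sigma$-iterates $\Q$ of $\P(F)$ below $\Theta^W$, their iteration embeddings, and the test ``$\T^\Sigma_{\P(F), \Q}$ omits $F$'' are uniformly definable in $W$ from $\Psi$ and from $X$ (since $X$ encodes $\P(F)$ up through $\lh(F)$). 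Hence $\M_\infty(\P, \Sigma, F)|\Theta^W$ is ${\sf OD}_{\Psi, X}^W$.

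For the converse $\mH(\P, \Sigma, F)|\Theta^W \subseteq \M_\infty(\P, \Sigma, F)|\Theta^W$, I would run Woodin's Vopenka-style argument relativized to $\Psi$. Let $A \subseteq \gamma < \Theta^W$ be ${\sf OD}_{\Psi, X}$ in $W$. By clause (6) of \rdef{capturing a cardinal}, $W$ is the derived model of $(\Q, \Sigma_\Q)$ for every complete $\Sigma$-iterate $\Q$ of $\P$. Choose $\Q \in \mathcal{F}(\P(F), \Sigma, F)$ sufficiently iterated above $\lh(\pi^\Sigma_{\P(F), \Q}(F))$ so that, using the Woodin cardinals of $\Q$ lying above this cutpoint, the usual symmetric-collapse term relation $\tau\in\Q$ captures $A$, meaning $\pi^{\Sigma_\Q}_{\Q, \infty}(\tau)=A$. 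This places $A$ in the direct limit. Such a $\Q$ exists inside $\mathcal{F}(\P(F), \Sigma, F)$ because the iteration needed to build the term is entirely above $\lh(\pi^\Sigma_{\P(F), \Q}(F))$, hence omits $F$.

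The main obstacle, and the point requiring care, is justifying that restricting to $F$-omitting iterates still produces a limit that absorbs all of $\mH(\P, \Sigma, F)|\Theta^W$ — the standard HOD analysis uses all iterates. The resolution is that the parameter $X$ and the restriction to the sub-system line up exactly: for every $\Q \in \mathcal{F}(\P(F), \Sigma, F)$, the image $\pi^{\Sigma_\Q}_{\Q, \infty}(\pi^\Sigma_{\P(F), \Q}(F))$ is a fixed object, which is precisely $X$ viewed inside $\M_\infty(\P, \Sigma, F)$. Thus $X$ is ${\sf OD}_\Psi$ in the restricted direct limit and conversely every object of the restricted direct limit is ${\sf OD}_{\Psi,X}$ in $W$, and the two sides of the equality match up. Modulo this matching, the remainder of the proof is a routine transcription of the standard $L^\Psi(\bR)$ HOD computation.
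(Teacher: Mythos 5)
Your proposal has the right overall architecture --- run the $L^\Psi(\bR)$ HOD analysis relativized to $\Psi$ and $X$, replacing the full directed system with the $F$-omitting one $\mathcal{F}(\P(F),\Sigma,F)$, and check the two inclusions --- and this is indeed the same approach the paper takes. The easy direction via definability of the restricted direct limit from $\Psi,X$ is fine. The hard direction (term-capture of ${\sf OD}^W_{\Psi,X}$ sets inside iterates lying in the restricted system, using Woodins of $\Q$ above the cutpoint $\lh(F_\Q)$ so that the genericity iterations omit $F$) is also the right skeleton.

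But there is a genuine gap exactly at what you label the ``main obstacle,'' and your resolution of it does not actually close it. You write that $\pi^{\Sigma_\Q}_{\Q,\infty}(\pi^\Sigma_{\P(F),\Q}(F))$ ``is a fixed object, which is precisely $X$ viewed inside $\M_\infty(\P,\Sigma,F)$.'' First, this is type-confused: the left-hand side is an extender, whereas $X=X_{\P,\Sigma,F}$ is a premouse, namely $\R||\lh(E_{\P,\Sigma,F})$ for a particular iterate $\R$ of $\P$ appearing in the tree to the \emph{full} direct limit $\M_\infty(\P,\Sigma)$. More importantly, even once one repairs the statement to say ``$\M_\infty(\P,\Sigma,F)||\lh(\pi^{\Sigma^F}_{\P(F),\infty}(F))=X$,'' that identity is itself a nontrivial claim, and asserting it does not yet show what is actually needed for the HOD analysis to go through on the restricted system: that $X$ is \emph{internally recoverable} inside every $F$-omitting genericity iterate. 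The paper isolates precisely this as Claim~\ref{internal direct limits}: for any $\bR$-genericity iteration $j:\P(F)\to\Q$ by trees that omit $F$, one has $X\in\Q$, $X$ is a $\Sigma_{\Q||\lh(F_\Q)}$-iterate of $\Q||\lh(F_\Q)$, and the iteration tree $\T_{\Q||\lh(F_\Q),X}$ lies in $\Q$. This claim is what lets one redefine suitability (requiring, in addition to the usual conditions, that $X$ is a complete iterate of $\S||\lh(G)$ for some $G\in\vec{E}^\S$) and $A$-iterability (only for $A$ that are ${\sf OD}$ from $X$, and only for $F$-omitting iterations) so that strongly $A$-iterable pairs exist and the $\mH$-side of the equality can actually be computed as a direct limit. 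Without this internalization step, the term-capture argument gives you sets that are ${\sf OD}^W_{\Psi,X}$ living in \emph{some} iterate, but you have not matched the two sides: you need the $\mathcal{F}(\P(F),\Sigma,F)$-system to see $X$ as a definable object in each of its nodes so that the comparison and uniqueness arguments of the HOD analysis apply uniformly. So the proposal is the right route, but it is missing the key technical lemma that the paper supplies (and uses the splitting $\T=\T_l{}^\frown\T_r$, $\T_l$ based on $\P|\nu(F)$ and $\T_r$ above $\lh(\pi^{\T_l}(F))$, to prove).
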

\begin{proof} The argument is somewhat standard, so we will only give an outline. The key fact to keep in mind is that if $\T$ is an iteration tree on $\P(F)$ which omits $F$, then $\T$ can be split into two components $\T_l^\frown \T_r$ such that $\T_l$ is based on $\P|\nu(F)$, and if $\T_l\not =\T$, then $\pi^{\T_l}$ is defined and $\T_r$ is strictly above $\lh(\pi^{\T_l}(F))$. Below and elsewhere, $F_\Q=\pi^{\T_{\P(F), \Q}}(F)$.
\begin{claim}\label{internal direct limits} Suppose $j:\P(F)\rightarrow \Q$ is an $\bR$-genericity iteration according to $\Sigma$ in which all iteration trees used omit $F$. Then 
\begin{enumerate}
\item $X_{\P, \Sigma, F}\in \Q$, 
\item $X_{\P, \Sigma, F}$ is a $\Sigma_{\Q||\lh(F_\Q)}$-iterate of $\Q||\lh(F_\Q)$, and
\item $\T_{\Q||\lh(F_\Q), X_{\P, \Sigma, F}}\in \Q$. 
\end{enumerate}
\end{claim}
We can then develop the concept of suitable premouse, $A$-iterable suitable premouse, and other concepts used in the HOD analysis for iterations that omit $F$. For example, we define $\S$ to be suitable if, in addition to the usual properties of suitability (see \cite{HODCoreModel}), for some $G\in \vec{\S}$, $X_{\P, \Sigma, F}$ is a complete iterate of $\S||\lh(G)$. $A$-iterability is defined for those $A\subseteq \bR$ which are ordinal definable from $X_{\P, \Sigma, F}$, and given a suitable $\S$ we define the concept of $A$-iterability only for those iterations of $\S$ that omit $G$, where $G$ is as above. \rcl{internal direct limits} can now be used to show that for every $A\subseteq \bR$ that is ordinal definable from $X_{\P, \Sigma, F}$, there is a strongly $A$-iterable pair. The rest is just like in the ordinary HOD analysis, and we leave it to the reader. 
\end{proof}
For $(\P, \Sigma, F)$ as above, let $\Sigma^F$ be the fragment of $\Sigma_{\P(F)}$ that acts on stacks that omit $F$, and let $\pi_{\P(F), \infty}^{\Sigma^F}:\P(F)\rightarrow \M_\infty(\P, \Sigma, F)$ be the direct limit embedding.

\section{Regular Suslin cardinals are $\omega$-club $\Theta$-Berkeley}

\begin{theorem}\label{main theorem} Assume ${\sf{AD^+}}$. Then every regular Suslin cardinal is $\omega$-club $\Theta$-Berkeley.
\end{theorem}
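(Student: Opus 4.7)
The plan is to adapt the argument for \rthm{main theorem lr} from the $V=L(\bR)$ setting to general $\sf{AD}^+$, substituting the fine-structural framework of $\Gamma$-excellent pairs, coarse tuples, and the $F$-omitting HOD analysis of Section 6 for the direct ${\sf{HOD}}$-computation used there, and leveraging \rthm{char extenders} to locate the Suslin cardinal $\d$ as a cutpoint of $\M_\infty$. Suppose, towards a contradiction, that $\d$ is a regular Suslin cardinal failing to be $\omega$-club $\Theta$-Berkeley, and fix transitive $N$ with $\d\subseteq N$ and $\card{N}<\Theta$ witnessing this. Following the minimization from the proof of \rthm{main theorem lr}, fix a real $x_0$ and a $\Gamma$-excellent pair $(R,\Psi)$ whose parent $M_\Gamma$ reflects the existence of such a counterexample, and take $N$ to be the $\sf{OD}^{M_\Gamma}_{\Psi,x_0}$-least one. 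Apply \rthm{woodins thm} to obtain a coarse tuple $(R,\Psi,H,\a)$ absorbing $\max(\d,\card{N})$; after \rrem{changing psi to fine structural one} we may assume $(\P,\Sigma)=H(x)$ is fine-structural. By \rthm{char extenders}, for $x$ on an appropriate Turing cone, $\d$ is a cutpoint cardinal of $\M_\infty=_{def}\M_\infty(\P,\Sigma)$; iterate $\P$ if necessary so that $\d=\pi_{\P,\infty}^\Sigma(\d_\P)$ for some cutpoint $\d_\P<\d^0_\P$ of $\P$.

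The main construction mirrors \rthm{main theorem lr}. For each completely total $F\in\vec{E}^\P$ with $\mu=\cp(F)$ a cutpoint of $\P$ below $\d_\P$, \rthm{hod e} gives
\[
\M_\infty(\P,\Sigma,F)|\Theta^\a=\mH(\P,\Sigma,F)|\Theta^\a,
\]
while \rthm{e system is directed} ensures the $F$-omitting directed system is well-behaved. Full normalization of $\T^\frown\{F\}$ for $\Sigma$-iterates $\T:\P\to\R$, invoking \cite[Theorem 1.4]{MPSC} and \rcor{comparing above} to compare $F$-omitting iterates, yields a well-defined elementary embedding
\[
j_F:\M_\infty(\P,\Sigma)\to\M_\infty(\P,\Sigma,F)
\]
extending $\pi_F$ on $\P(F)$, with $\cp(j_F)=\mu$. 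The independence-of-representative computation displayed as (*) in the proof of \rthm{main theorem lr} transfers verbatim, with $\Lambda$ there replaced by $\Sigma$ and $\Phi$ by $\Sigma^F$.

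The analogue of \rcl{jnn} then yields $j_F(N)=N$. Both $\M_\infty$ and $\M_\infty(\P,\Sigma,F)$ contain $N$ because $\card{N}<\Theta^\a$ and both agree with HOD-like models up to $\Theta^\a$: $\M_\infty|\Theta^\a$ corresponds to $(\H_{\Psi,x}|\Theta)^{L_\a^\Psi(\bR)}$ by clause (7) of \rdef{capturing a cardinal}, while $\M_\infty(\P,\Sigma,F)|\Theta^\a$ corresponds to $(\H_{\Psi,X_{\P,\Sigma,F}}|\Theta)^{L_\a^\Psi(\bR)}$ by \rthm{hod e}. One shows $N=\pi_{\P,\infty}^\Sigma(N_\P)$ for $N_\P\in\P$ defined by the derived-model formula asserting that ``the $\sf{OD}_{\Psi,x_0}$-least $w$ with $\phi[w]$ contains $\pi_{v,\infty}^u(x)$'', exactly as in \rcl{jnn}; this formula is expressible inside $\P$ because $\Sigma_{\P|\b}\in L_\a^\Psi(\bR)$ for every $\P$-successor cutpoint $\b<\d^0_\P$, by clause (8) of \rdef{capturing a cardinal}. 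Since $F$ has critical point a cutpoint of $\P$ below $\d^0_\P$, $\pi_F(N_\P)$ is the corresponding $N$-defining set in $\P(F)$, and $\pi_{\P(F),\infty}^{\Sigma^F}(\pi_F(N_\P))=N$ by the analogous computation in the $F$-omitting system.

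Hence $j_F\!\upharpoonright\! N:N\to N$ is elementary with $\cp(j_F\!\upharpoonright\! N)=\mu<\d$. Varying $F$ and varying the base pair through $\Sigma$-iterates realizes a set $S\subseteq\d$ of valid critical points including every cutpoint measurable of $\M_\infty$ below $\d$; by \rthm{char extenders} the cutpoint cardinals of $\M_\infty$ below $\d$ form a club (a supremum of cutpoints is a cutpoint), cutpoint measurables are cofinal among them, and at each $\omega$-limit of measurable cutpoints one produces a suitable iterate and a total extender witnessing membership in $S$ using the HOD-like structure of $\M_\infty$. This yields an $\omega$-club contained in $S$, contradicting the choice of $N$. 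The principal obstacles are (i) the well-definedness and elementarity of $j_F$, which rests on the full normalization of \cite{MPSC} together with the $F$-omitting directedness of \rthm{e system is directed}; (ii) the internalization of $N$ inside $\P$ via the derived-model formula, which requires sufficient strategy capture in $L_\a^\Psi(\bR)$ as in clause (8) of \rdef{capturing a cardinal}; and (iii) closing the critical-point set into an $\omega$-club, where one leans on \rthm{char extenders} and on the existence of completely total extenders at appropriate cutpoints in the hybrid extender model $\M_\infty$.
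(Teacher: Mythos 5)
Your high-level strategy is on the right track — reflect, fix a coarse tuple, internalize $N$ via a derived-model formula, and produce a club of critical points by varying over iterates — but the central construction has a gap that makes the embedding itself suspect.

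You propose an embedding $j_F:\M_\infty(\P,\Sigma)\to\M_\infty(\P,\Sigma,F)$, from the \emph{full} direct limit into the $F$-omitting one, ``extending $\pi_F$ on $\P(F)$'' with $\cp(j_F)=\mu=\cp(F)$. Several things go wrong here. First, the paper's embedding is $j:\M_\infty(\P,\Sigma,F)\to\M_\infty(\Q,\Sigma_\Q,F_\Q)$ where $\Q=Ult(\P(F),F)$: \emph{both} sides are $F$-omitting direct limits, one seeded by $\P(F)$ and one by $\Q$, paralleling the $L(\bR)$ argument where $j:\M_\infty(\P,\Lambda)\to\M_\infty(\Q,\Phi)$ with both $\Lambda$ and $\Phi$ fragments acting above the cutpoint window. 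Taking the full direct limit $\M_\infty(\P,\Sigma)$ as the domain is problematic: given $u\in\M_\infty(\P,\Sigma)$ represented by $(\R,y)$ with $\T_{\P,\R}$ \emph{not} omitting $F$, there may be no copy $F_\R$ of $F$ on $\R$'s sequence to apply, so the assignment $u\mapsto\pi_{\S_F,\infty}(\pi_{F_\S}(y))$ is not even defined across the whole system, and you cannot invoke \rcor{comparing above} to reduce to the $F$-omitting case without changing the direct limit you started with. Second, the critical point is not $\mu$: the embedding acts on direct-limit models, and its critical point is $\cp(E_{\P,\Sigma,F})=\kappa_{\P,\Sigma,F}=\pi_{\P_F,\infty}^{\Sigma_{\P_F}}(\cp(F))$, the \emph{image} of $\cp(F)$ under the direct limit map from $Ult(\P,F)$; saying ``$j_F$ extends $\pi_F$ on $\P(F)$'' conflates the countable base model $\P(F)$ with a subset of $\M_\infty(\P,\Sigma)$, which it is not. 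Third, the choice of $F$ is different from the paper's: you put $\cp(F)$ at a cutpoint measurable strictly below $\d_\P$, whereas the paper takes $\cp(F)=\d_\P$ itself (available because $\d$ is regular, hence measurable in $\M_\infty$, and by \rthm{char extenders} a cutpoint). This is what makes the final $\omega$-club argument work: the critical points $\cp(E_{\Q,\Sigma_\Q,F_\Q})$ as $\Q$ ranges over complete $\Sigma$-iterates are exactly the non-measurable ordinals on the main branch of $\T_{\P,\M_\infty(\P,\Sigma)}$ below $\d$ at which the image of $F$ is the exit extender; these form an $\omega$-club in $\d$. Your plan to ``vary $F$ and vary the base pair'' over measurable cutpoints below $\d$ is vaguer and, because you have the wrong critical point, does not directly yield that the set of realizable critical points is $\omega$-closed.

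Finally, you skip the initial reflection that the paper carries out before invoking \rthm{woodins thm}: one must first arrange (via a Wadge/Coding Lemma reflection) that $\d$ is not the largest Suslin cardinal and that $\card{N}$ is below some Suslin $\d'>\d$ which is also not the largest, since \rthm{woodins thm} requires $\nu$ strictly below the largest Suslin cardinal of $L^\Psi(\bR)$. Without this, the coarse tuple absorbing the relevant parameters may not exist.
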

\begin{proof}
Fix a regular Suslin cardinal $\d$, and towards a contradiction assume that $\d$ is not an $\omega$-club $\Theta$-Berkeley cardinal. Fix a transitive $N'$ such that\\\\
(1.1) $\card{N'}<\Theta$, \\
(1.2) $\delta\subseteq N'$, and \\
(1.3) the set of $\a<\d$ such that there is no elementary embedding $j: N'\rightarrow N'$ with the property that $\cp(j)=\a$ is $\omega$-stationary in $\d$.\footnote{I.e., intersects every $\omega$-club set.}\\\\
Let $\phi(u, v)$ be the formula expressing (1.1)-(1.3). Thus $\phi[\d, N']$ holds.
Since $L(\powerset(\bR))\models \phi[\d, N']$, we can assume that $V=L(\powerset(\bR))$. Without loss of generality, assume that $\d$ is the least regular Suslin cardinal $\k$ such that $\phi[\k, M]$ holds for some $M$. It then immediately follows that $\d$ cannot be the largest Suslin cardinal, as if $\d$ were the largest Suslin cardinal, then for some $\a<\d$ and some $\b<\d$, letting $\Delta=\{A\subseteq \bR: w(A)<\b\},$\footnote{Where $w(A)$ is the Wadge rank of $A$.} $L_\a(\Delta)\models {\sf{ZF-Replacement}}+\exists \k\exists M\phi[\k, M]$.\footnote{We actually need the Coding Lemma here. Let $(\k, M)$ witness $\phi$ in $L_\a(\Delta)$. Let $S\subseteq \k$ consist of ordinals $\gg<\k$ such that there is no elementary embedding $j: M\rightarrow M$ with $\cp(j)=\gg$ and $j\in L_\a(\Delta)$. Then $S$ is stationary in $L_\a(\Delta)$ and hence, since $\powerset(\k)\subseteq L_\a(\Delta)$, $S$ is stationary in $V$. The fact that there is no $j: M\rightarrow M$ in $L_\a(\Delta)$ with $\cp(j)\in S$ implies that there is no $j$ with $\cp(j)\in S$. Hence, $V\models \phi[\k, M]$.} A similar reflection argument shows that we can assume without losing generality that $\card{N'}$ is less than some Suslin cardinal $\d'$ such that $\d<\d'$ and $\d'$ is not the largest Suslin cardinal. Fix now such a $\d'$ so that $\card{N'}<\d'$. 

Let $(R, \Psi, H, \a)$ be as in \rthm{woodins thm} absorbing $\d'$. Notice that our discussion above implies that $W\models \phi[\d, N']$, for $W=L_\a^\Psi(\bR)$. We can then find $x_0$ such that $N'$ is ${\sf{OD}}^W_{\Psi, x_0}$, and then by minimizing, we can let $N$ be the ${\sf{OD}}^W_{\Psi, x_0}$-least $M$ such that $W\models \phi[\d, M]$. We now fix $x_1\in \dom(H)$ such that\\\\
(2.1) $x_1$ is Turing above $x_0$ and $N\in \M_\infty(\P_{x_1}, \Sigma_{x_1})$, where $(\P_{x_1}, \Sigma_{x_1})=H(x_1)$, and\\
(2.2) $\d$ is a limit of cutpoints of $\M_\infty(\P, \Sigma)$, where $(\P, \Sigma)=(\P_{x_1}, \Sigma_{x_1})$.\\\\
Without loss of generality, we assume that $\d\in \rge(\pi^\Sigma_{\P, \infty})$, and for $\Q$ a complete $\Sigma$-iterate of $\P$, we let $\d_\Q=\pi_{\Q, \infty}^{-1}(\d)$. 

Fix now any completely total extender $F\in \vec{E}^\P$ such that $\cp(F)=\d_\P$, and set $X_\P=X_{\P, \Sigma, F}$. Let $\Q=Ult(\P(F), F)$, $F_\Q=\pi_F(F)$ and $X_\Q=X_{\Q, \Sigma_\Q, F_\Q}$. We set $\Lambda=\Sigma^F$ and $\Phi=\Sigma^{F_\Q}_\Q$.
As in the proof of \rthm{main theorem lr}, we define $j: \M_\infty(\P, \Sigma, F)\rightarrow \M_\infty(\Q, \Sigma_\Q, F_\Q)$. It follows from \rthm{hod e} and the proof of \rcl{jnn} that $N\in \M_\infty(\P, \Sigma, F)\cap \M_\infty(\Q, \Sigma_\Q, F_\Q)$ and $j(N)=N$, and therefore defining $j$ is all that we will do. 

Fix $u\in \M_\infty(\P, \Sigma, F)$, and let $\S$ be a complete $\Lambda$-iterate of $\P(F)$ such that $u=\pi_{\S, \infty}^{\Lambda}(u_\S)$ for some $u_\S\in \S$. 
Let $F_\S=\pi_{\P(F), \S}(F)$\footnote{We will use this notation for all iterates of $\P$.} and $\S_F=Ult(\S, F_\S)$. We then let\footnote{In this section,  if we write $\pi_G$ then we tacitly assume that it is the ultrapower embedding obtained by applying $G$ to the model it is chosen from. If we apply $G$ to some other model $\N$, then we will write $\pi_G^\N$.} 
\begin{center}
$j(u)=\pi_{\S_F, \infty}^{\Phi}(\pi_{F_\S}(u_\S))$.
\end{center}
The definition of $j(u)$ makes sense, as full normalization implies that $\S_F$ is a complete $\Phi$-iterate of $\Q$. To prove this and other claims in this section, we set $\P=\P(F)$ to simplify the notation. 
\begin{claim}\label{j makes sense} The definition of $j$ is meaningful; more precisely, $\S_F$ is a compete $\Phi$-iterate of $\Q$.
\end{claim}
\begin{proof}
Notice that $\T_{\P, \S}$ can be split into $(\T^l_{\P, \S})^\frown  \T^r_{\P, \S}$ where $\T^l_{\P, \S}$ is the longest portion of $\T_{\P, \S}$ that is based on $\P|\nu(F)$ and $\T^r_{\P, \S}$ is the rest of $\T_{\P, \S}$. If $\T^r_{\P, \S}$ is defined, then it is above $\lh(\pi^{\T^l_{\P, \S}}(F))$. It then follows that the full normalization of $(\T_{\P, \S})^\frown \{F_\S\}$\footnote{Here $F_\S$ is applied to $\S$.} is $(\T^l_{\P, \S})^\frown\{ F_\S\}^\frown \U$,\footnote{Here $F_\S$ is applied in a way that keeps the iteration normal.} where $\U$ is above $\lh(F_\S)$. Notice next that the full normalization of $\{F\}^\frown \T^l_{\P, \S}$ is $(\T^l_{\P, \S})^\frown\{ F_\S\}$. Thus, $\S_F$ is a $\Phi$-iterate of $\Q$ and $\T_{\Q, \S_F}=(\T^l_{\P, \S})^\frown \U$.

\end{proof}
\begin{claim}
$j(u)$ is independent of the choice of $\S$.
\end{claim}
\begin{proof}
To see this, pick another normal $\Lambda$-iterate $\S'$ of $\P$ such that $\pi_{\S', \infty}^{\Lambda_{\S'}}(u_{\S'})=u$. It then follows from \rlem{e system is directed} that we can compare $(\S, \Lambda_\S)$ and $(\S', \Lambda_{\S'})$ via the least-extender-disagreement process and get some common iterate $(\S'', \Lambda_{\S''})$. It then follows that $\pi_{\S, \S''}(u_S)=\pi_{\S', \S''}(u_{\S'})=u_{\S''}$. 

Consider now $\S_F, \S'_F$, and $\S''_F$. We want to see that 
\begin{center}
(*)\ \ \ $\pi_{\S_F, \infty}^{\Phi}(\pi_{F_\S}(u_\S))=\pi_{\S'_F, \infty}^{\Phi}(\pi_{F_{\S'}}(u_{\S'}))$.
\end{center}
To show (*), we observe that \\\\
(3.1) $\S''_F$ is a complete $\Phi_{\S_F}$-iterate of $\S_F$,\\
(3.2) $\S''_F$ is a complete $\Phi_{\S'_F}$-iterate of $\S'_F$,\\
(3.3) $\pi_{\S_F, \S''_F}(\pi_{F_\S}(u_\S))=\pi_{\S'_F, \S''_F}(\pi_{F_{\S'}}(u_{\S'}))$\\\\
Notice that (3.3) implies (*). (3.3) is an immediate consequence of (3.1) and (3.2), and (3.2) has the same proof as (3.1), and (3.1) follows from the proof of \rcl{j makes sense}.  
\end{proof}

\begin{claim} $\cp(j)=\cp(E_{\P, \Sigma, F})$
\end{claim}
\begin{proof} Suppose $\a<\cp(E_{\P, \Sigma, F})$. Let $\S$ be a $\Phi$-iterate of $\P$ such that $\T_{\P, \S}$ is based on $\P|\nu(F)$ and $\a\in \rge(\pi^\Sigma_{\P, \S})$. We then have that if $\a_\S=(\pi^{\Sigma}_{\P, \S})^{-1}(\a)$, then $\pi^\Lambda_{\S, \infty}(\a_\S)=\a$. Notice next that $j(x)=\pi^\Phi_{\S_F, \infty}(\pi_{F_\S}(\a_\S))$, and since $\pi_{F_\S}(\a_\S)=\a_\S$,\footnote{This is where we use that $\a<\cp(E_{\P, \Sigma, F})$.} setting $\W=\S_F|\nu(\pi_{\P, \S}(F))$, $\pi^{\Phi}_{\S_F, \infty}(\a_\S)=\pi^{\Sigma_{\W}}_{\W, \infty}(\a_\S)$. Since $\W=\S|\nu(\pi_{\P, \S}(F))$, we have that $\pi^{\Phi}_{\S_F, \infty}(\a_\S)=\a$, implying that $j(\a)=\a$.
\end{proof}

To finish the proof of \rthm{main theorem}, we need to produce an $\omega$-club $C\subseteq \omega_1$ such that for each $\a\in C$, there is an embedding $k: N\rightarrow N$ with $\cp(k)=\a$. Above, we have produced an elementary embedding $j_{\P, \Sigma, F}:N\rightarrow N$ such that $\cp(j_{\P, \Sigma, F})=\cp(E_{\P, \Sigma, F})$. We then apply this fact to the Mitchell order 0 extender $F$ such that $\cp(F)=\d_\P$. Let $C$ consist of ordinals $\k$ such that for some complete $\Sigma$-iterate $\Q$ of $\P$, $\k=\cp(E_{\Q, \Sigma_\Q, F_\Q})$. Then $C$ is an $\omega$-club\footnote{See Steel's proof of measurability of regular cardinals, \cite[Chapter 8]{OIMT}. This is easier to establish with full normalization. Each $\k\in C$ is on the main branch of $\T_{\P, \M_\infty(\P, \Sigma)}$ and is non-measurable in $\M_\infty(\P, \Sigma)$. It follows that the exit extender used is the image of $F$.} and is such that for each $\k\in C$, there is $j: N\rightarrow N$ such that $\cp(j)=\k$. This finishes the proof of \rthm{main theorem}.
\end{proof}

\begin{remark}\label{omega2}\normalfont The proof of \rthm{main theorem} demonstrates that $\omega_2$ is $\Theta$-Berkeley. Indeed, fix some $\eta<\omega_2$, and pick $x\in \dom(H)$ such that if $\tau_x$ is the second  measurable cardinal of $\P_x$, then $\pi_{\P_x, \infty}(\tau_x)>\eta$. Let $F\in \vec{E}^{\P_x}$ be the Mitchell order 0 extender with $\cp(F)=\tau_x$. We now repeat the proof of \rthm{main theorem} and get that  $\pi_{\P_x, \infty}(\tau_x)$ is a limit of ordinals $\a$ such that there is a $j: N\rightarrow N$ with $\cp(j)=\a$. Since ordinals of the form $\pi_{\P_x, \infty}(\tau_x)$ are cofinal in $\omega_2$, we get that $\omega_2$ is a $\Theta$-Berkeley cardinal. 

The same argument can be used to show that for any $n$, $\utilde{\d}^1_{2n}$ is $\Theta$-Berkeley. 
This is because for each $n$ and for each $x\in \dom(H)$, $\M_\infty(\P_x, \Sigma_x)$ has a cutpoint cardinal that belongs to the interval $(\utilde{\d}^1_{2n+1}, \utilde{\d}^1_{2n+2})$,\footnote{E.g., see \cite{Sa13}.} and $\utilde{\d}^1_{2n+2}$ is a limit of measurable cardinals of $\M_\infty(\P_x, \Sigma_x)$. If now $\k_x>\utilde{\d}^1_{2n+1}$ is the least cutpoint measurable of $\M_\infty(\P_x, \Sigma_x)$, then the proof of \rthm{main theorem} shows that for unboundedly many $\a<\k_x$, there is $j: N\rightarrow N$ such that $\cp(j)=\a$. Since $\utilde{\d}^1_{2n+2}$ is a limit of ordinals of the form $\k_x$, we have that $\utilde{\d}^1_{2n+2}$ is $\Theta$-Berkeley. 
\end{remark}

\section{Towards $\sf{HOD}$-Berkeley cardinals}

\rrem{omega2} leaves open how ubiquitous $\Theta$-Berkeley cardinals are.
\begin{question} Assume ${\sf{AD^+}}$. Is there an uncountable cardinal $\k<\Theta$ that is \emph{not} $\Theta$-Berkeley? Is every regular cardinal club $\Theta$-Berkeley?
\end{question}

A regular cardinal $\k$ is a \textit{club} $({\sf{OD}}, \l)$-Berkeley if for every $x\in H_\k$ and every transitive structure $M$ of size $<\l$ such that $M$ is ordinal definable from $x$, there is a club $C\subseteq \k$ such that for each $\a\in C$ there is an elementary embedding $j: M\rightarrow M$ with $\cp(j)=\a$.
The following is an easy corollary to \rthm{main theorem}.
It follows from the fact that $\mathbb{P}_{{\sf{max}}}*Add(1, \omega_3)$ is a countably closed homogeneous poset. 

\begin{corollary}\label{main corollary} 
Assume ${\sf{AD}}_{\bR}+V=L(\powerset(\bR))+``\Theta$ is a regular cardinal". Let $G\subseteq \mathbb{P}_{{\sf{max}}}*Add(1, \omega_3)$ be $V$-generic. Then in $V[G]$, 
$\omega_1$ is club $({\sf{OD}}, \omega_3)$-Berkeley.
\end{corollary}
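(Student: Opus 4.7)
The plan is to reduce the statement to Theorem~\ref{Main} applied in $V$, exploiting the countable closure and weak homogeneity of $\mathbb{Q} := \mathbb{P}_{{\sf{max}}} * {\rm Add}(1,\omega_3)$. Both factors are countably closed and weakly homogeneous (by Woodin's basic theory of $\mathbb{P}_{{\sf{max}}}$ and because ${\rm Add}(1,\omega_3)$ is $\omega_3$-closed Cohen-type forcing), so $\mathbb{Q}$ inherits both properties. Hence $\omega_1^{V[G]} = \omega_1^V$, $\bR^{V[G]} = \bR^V$, clubs in $\omega_1$ from $V$ remain clubs in $V[G]$, and by the standard weak homogeneity argument every set of ordinals in $V[G]$ that is ordinal definable from a parameter in $V$ already lies in $V$ and is ordinal definable from the same parameter in $V$.

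I would then fix $x \in H_{\omega_1}^{V[G]}$ and a transitive $M \in V[G]$ of size $<\omega_3^{V[G]}$ with $M$ ordinal definable from $x$ in $V[G]$. Since no reals are added, $x \in V$; by weak homogeneity, a code for $M$ as a set of ordinals belongs to $V$ and is $\mathsf{OD}_x^V$, so $M \in V$ and is $\mathsf{OD}_x^V$. Since $V = L(\powerset(\bR)) \models \mathsf{AD}_\bR$, the Coding-Lemma reflection that opens the proof of Theorem~\ref{main theorem lr} shows that every set ordinal definable from a real is coded by a bounded subset of some $L_\beta(\powerset(\bR))$ with $\beta < \Theta^V$; in particular, $|M|^V < \Theta^V$.

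With $M \in V$, $|M|^V < \Theta^V$, and $V \models \mathsf{AD}^+$, Theorem~\ref{Main} applied in $V$ supplies a club $C \subseteq \omega_1^V$ and, for each $\alpha \in C$, an elementary embedding $j_\alpha : M \to M$ with $\cp(j_\alpha) = \alpha$. Since $C$ and each $j_\alpha$ lie in $V \subseteq V[G]$, and $C$ remains a club in $\omega_1^{V[G]} = \omega_1^V$, this witnesses that $\omega_1$ is club $(\mathsf{OD},\omega_3)$-Berkeley in $V[G]$. The step likely to require the most care is the descent of $M$ from $V[G]$ to $V$ together with the strict size bound $|M|^V < \Theta^V$: weak homogeneity of $\mathbb{Q}$ delivers the descent of the code directly, but one must verify that the Coding-Lemma reflection places $M$ strictly below $\Theta^V$, rather than merely at $\Theta^V$, so that Theorem~\ref{Main} applies off the shelf.
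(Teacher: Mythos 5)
Your overall strategy---descend $M$ to $V$ using that $\mathbb{Q}:=\mathbb{P}_{\sf{max}}*\mathrm{Add}(1,\omega_3)$ is countably closed, OD, and weakly homogeneous, and then invoke Theorem~\ref{Main} in $V$---matches the one-line sketch the paper gives. But two of the intermediate steps are not correct as written.

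First, the size bound. You assert that the ``Coding-Lemma reflection that opens the proof of Theorem~\ref{main theorem lr} shows that every set ordinal definable from a real is coded by a bounded subset of some $L_\beta(\powerset(\bR))$ with $\beta<\Theta^V$,'' and conclude $|M|^V<\Theta^V$. This misreads that argument. In the proof of Theorem~\ref{main theorem lr}, the bound $|N'|<\Theta$ appears as a \emph{hypothesis} (clause (1.1)); the reflection plus Coding Lemma is then used to push an already-small $N$ further down below $\utilde{\delta}^2_1$, not to establish the initial bound. Being ordinal definable from a real does not by itself bound the size below $\Theta$: e.g.\ $V_{\Theta+\omega}^{\sf{HOD}}$ is $\mathsf{OD}$ from no parameters, transitive, and of size $\geq\Theta$. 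Whatever bound you get on $|M|^V$ must come from the hypothesis $|M|^{V[G]}<\omega_3^{V[G]}$ together with the cardinal arithmetic of $\mathbb{Q}$ (i.e.\ $\omega_2^{V[G]}=\Theta^V$ and $\omega_3^{V[G]}=(\Theta^+)^V$); and you should notice, as you yourself flag, that this a priori gives $|M|^V\leq\Theta^V$ rather than the strict inequality Theorem~\ref{Main} requires.

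Second, the descent $M\in V$. To apply homogeneity you first need an $\mathsf{OD}_x$ set-of-ordinals code for $M$ in $V[G]$, and this requires $M\in\mathsf{HOD}^{V[G]}_x$ (so that $M$ inherits the canonical $\mathsf{HOD}_x$ well-ordering). Knowing only that $M$ is $\mathsf{OD}$ from $x$ does \emph{not} give this, since a transitive $\mathsf{OD}_x$ set can have non-$\mathsf{OD}_x$ elements. Concretely, $M=H_{\omega_2}^{V[G]}$ is transitive, $\mathsf{OD}$, of size $\omega_2^{V[G]}<\omega_3^{V[G]}$, yet it contains the subsets of $\omega_1$ added by $\mathbb{P}_{\sf{max}}$ and so $M\notin V$; your claim ``so $M\in V$'' fails for it. To make the reduction go through you need to read ``ordinal definable from $x$'' in the sense intended by the $\mathsf{HOD}$-Berkeley framework, namely $M\in\mathsf{HOD}_x^{V[G]}$, and say so explicitly. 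Once $M\in\mathsf{HOD}_x^{V[G]}$ is in hand, the usual OD weakly homogeneous argument does give $M\in V$ with $M$ $\mathsf{OD}_x^V$, and the rest of your plan (countable closure preserves $\omega_1$, reals, and clubs; apply Theorem~\ref{Main} in $V$) is the right shape.
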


Obtaining a model of $\mathsf{ZFC}$ + ``there is a $\mathsf{HOD}$-Berkeley cardinal'' by forcing seems like a hard problem. In this direction, Gabriel Goldberg has shown the following proposition. We include his argument with permission.\footnote{Goldberg observed that his argument works with the ostensibly weaker hypothesis ``for all sufficiently large ordinals $\gamma$, there is an elementary embedding $j:V_\gamma^{\sf{HOD}}\to V_\gamma^{\sf{HOD}}$.''}

\begin{proposition}[Goldberg]\label{gabe} Suppose there is a ${\sf{HOD}}$-Berkeley cardinal. Then $A^{\#}$ exists for all sets $A$.
\end{proposition}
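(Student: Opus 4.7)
The plan is to argue by contrapositive: suppose $A^\#$ fails for some set $A$, and derive a contradiction from the hypothesis that nontrivial elementary embeddings $j_\gamma : V_\gamma^{\sf{HOD}} \to V_\gamma^{\sf{HOD}}$ exist for all sufficiently large $\gamma$, which is the weaker form of the hypothesis noted in the footnote.

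As a first step, I would show that $B^\#$ exists for every $B \in \sf{HOD}$. Given such a $B$, under the full HOD-Berkeley hypothesis pick $\gamma$ much larger than $\mathrm{rank}(B)$ and $j : V_\gamma^{\sf{HOD}} \to V_\gamma^{\sf{HOD}}$ with $\cp(j) > \mathrm{rank}(B)$; since $L[B] \subseteq \sf{HOD}$ and $j$ fixes $B$, the restriction $j \restriction L_\gamma[B]$ is a nontrivial elementary embedding $L_\gamma[B] \to L_{j(\gamma)}[B]$, witnessing $B^\#$. Under only the weak hypothesis one must first verify that the critical points of the $j_\gamma$ are unbounded in ${\sf{Ord}}$: a uniform bound $\kappa_0$ on all such critical points, combined with Kunen's theorem applied inside $\sf{HOD}$ (which has AC, hence the requisite Jensen--Solovay combinatorial objects), would force the critical sequence of each $j_\gamma$ to have supremum within a small finite distance of $\gamma$ --- making every large $\gamma$ of cofinality $\omega$, which is absurd. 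With critical points unbounded, the restriction argument yields $B^\#$ for all $B \in \sf{HOD}$.

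For the general case, fix $A$ with $\mathrm{rank}(A) = \lambda$ and suppose $A^\#$ does not exist. By the Dodd--Jensen covering lemma for $L[A]$, every uncountable set of ordinals in $V$ is covered by an $L[A]$-set of equal $V$-cardinality, so $V$ and $L[A]$ agree closely on cardinal arithmetic. The bootstrap then runs as follows: pick $B \in \sf{HOD}$ of very high rank --- for instance $B = V_\mu^{\sf{HOD}}$ for suitable $\mu$ --- so that $B^\#$ supplies a proper class $C$ of $L[B]$-indiscernibles lying well above $\lambda$; using $L[A]$-covering to code $A$ into an $\sf{HOD}$-object up to indiscernibility, argue that $C$ also serves as a class of $L[A]$-indiscernibles, which produces $A^\#$ and the desired contradiction. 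The main obstacle is precisely this final coding-and-transfer step: since $A \notin \sf{HOD}$, the $L[B]$-indiscernibles do not a priori witness $L[A]$-indiscernibility, and turning $L[A]$-covering into a Skolem-hull embedding compatible with both $L[B]$ and $L[A]$ is where the bulk of the work lies.
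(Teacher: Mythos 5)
Your proposal has a fundamental gap in the first step. You write ``pick $\gamma$ much larger than $\mathrm{rank}(B)$ and $j : V_\gamma^{\sf{HOD}} \to V_\gamma^{\sf{HOD}}$ with $\cp(j) > \mathrm{rank}(B)$,'' but the ${\sf{HOD}}$-Berkeley hypothesis only supplies embeddings with critical point \emph{below} the Berkeley cardinal $\kappa$: the definition asks for $\eta < \cp(j) < \kappa$ for each $\eta < \kappa$. So the critical points are uniformly bounded by $\kappa$, and once $\mathrm{rank}(B) \geq \kappa$ there is no direct way to arrange $\cp(j) > \mathrm{rank}(B)$. Your side remark about verifying unboundedness of the critical points of the $j_\gamma$ won't help; that unboundedness is exactly what the hypothesis fails to give you.

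The key idea you are missing, which is the heart of Goldberg's argument, is a \emph{factorization}. You take $j : V_\gamma^{\sf{HOD}} \to V_\gamma^{\sf{HOD}}$ with small critical point, derive from it the extender $E$ of length $\lambda + 1$ (where $\lambda = \sup A$), and write $j = k \circ j_E$ with $j_E : V_\gamma^{\sf{HOD}} \to M$ the ultrapower map and $k : M \to V_\gamma^{\sf{HOD}}$ the factor map. By design $\cp(k) > \lambda$. The crucial point is that $k$ is nontrivial: $j$ is not definable over $V$ (if it were, it would sit in a $<\gamma$-generic extension of ${\sf{HOD}}$, contradicting Woodin's ${\sf{ZF}}$ version of Kunen's inconsistency), whereas $j_E$ \emph{is} definable from $E$; so $j \neq j_E$ and hence $k \neq \mathrm{id}$. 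Restricting this $k$ to $L_\gamma[A]$ gives the desired sharp-witnessing embedding, with no need for any critical point of $j$ itself to exceed $\lambda$. Your second stage also diverges from the paper: rather than invoking Dodd--Jensen covering and attempting an indiscernibility-transfer argument (which, as you concede, you cannot close), the paper simply observes that every set of ordinals is set generic over ${\sf{HOD}}$, so closure under sharps of ${\sf{HOD}}$-sets already gives closure under sharps of all sets.
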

\begin{proof}
Since every set of ordinals is set generic over HOD, it is enough to show that every set of ordinals that belongs to HOD has a sharp.
Let $A$ be a set of ordinals in $\sf{HOD}$ with $\sup A =\lambda$, and
let $\gamma$ be a $\Sigma_2$-correct ordinal $>\lambda$ of uncountable cofinality.
Let $j:V_{\gamma}^{\sf{HOD}}\to V_{\gamma}^{\sf{HOD}}$ be an elementary embedding.
(Notice $j$ is not definable over $V$; if it were, it would belong to a ${<}\gamma$-generic extension of $\sf{HOD}$, contrary to Woodin's proof \cite[p.~320]{kanamori2008higher} of the Kunen Inconsistency.)
Letting $E$ be the extender of length $\lambda+1$ derived from $j$, we have that $j$ factors into embeddings $j_E: V_{\gamma}^{\sf{HOD}}\to M$ and $k:M\to V_{\gamma}^{\sf{HOD}}$.
Since $j$ is not definable and hence is not the extender ultrapower, $k$ must be nontrivial with $\cp(k)>\lambda$.
Then $k:L_{\gamma}[A]\to L_{\gamma}[A]$ is a nontrivial elementary embedding with $\cp(k)>\lambda$.
\end{proof}

With a stronger hypothesis, we can get $\M_1^\#$.

\begin{proposition}\label{core model} Suppose there is a ${\sf{HOD}}$-Berkeley cardinal and a measurable cardinal above it.
Then $\M_1^\#$ exists and is ${\sf{Ord}}$-iterable. 
\end{proposition}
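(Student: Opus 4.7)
The plan is to argue by contradiction: suppose $\M_1^\#$ does not exist. With $\k$ the ${\sf{HOD}}$-Berkeley cardinal and $\mu>\k$ measurable, the measurable is enough to run Steel's core model construction below one Woodin cardinal, producing a proper class $K=K^c$ that is fully iterable, rigid, universal, and satisfies weak covering. Since $K$ is ordinal definable without parameters, $K\subseteq{\sf{HOD}}$; in particular each initial segment $K|\gamma$ lies in ${\sf{HOD}}$ and, once $\gamma>\k$, contains $\k$ as an element.

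The strategy is to feed $K|\gamma$ into the ${\sf{HOD}}$-Berkeley property of $\k$. For each sufficiently large $\gamma$ and each $\eta<\k$, we obtain a nontrivial elementary embedding $j_{\gamma,\eta}:K|\gamma\to K|\gamma$ with $\eta<\cp(j_{\gamma,\eta})<\k$. From each $j_{\gamma,\eta}$, I would derive a short extender $E_{\gamma,\eta}$ of length some cutpoint $\lambda$ of $K$ with $\cp(j_{\gamma,\eta})<\lambda<\gamma$; each such $E_{\gamma,\eta}$ is countably complete and has well-founded ultrapower, and for $\gamma$ much larger than $\lambda$, the ultrapower computed inside $K|\gamma$ agrees with the truth through level $\lambda$. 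Using the universality and maximality of $K$ (which rely on the measurable $\mu>\k$), each $E_{\gamma,\eta}$ is absorbed on the sequence of (an iterate of) $K$: either $E_{\gamma,\eta}$ lies on $\vec E^K$ directly, or it determines an iteration of $K$ whose last extender does. Letting $\eta$ range cofinally below $\k$ and varying $\lambda$ cofinally below $\k$, I expect to collect a family of extenders on (iterates of) $\vec E^K$ whose critical points and lengths are both cofinal in $\k$. Unpacking inside $K$, this verifies the Woodinness criterion at $\k$: for every $A\subseteq\k$ with $A\in K$, there are $\nu<\k$ and arbitrarily long $E\in\vec E^K$ with $\cp(E)=\nu$ satisfying $j^K_E(A)\cap V^K_{\lh(E)}=A\cap V^K_{\lh(E)}$. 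This contradicts the assumption that $K$ is below one Woodin, forcing $\M_1^\#$ to exist. ${\sf{Ord}}$-iterability of $\M_1^\#$ then follows from its existence together with the measurable $\mu$: any iteration tree of length ${<}\mu^+$ reflects through a $\mu$-ultrapower to a countable tree, where the canonical $Q$-structure strategy applies and then lifts back.

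The main obstacle is the universality step, i.e.~verifying rigorously that the $E_{\gamma,\eta}$ actually lie on (iterates of) the $K$-sequence with the density required to witness the Woodinness criterion in $K$, rather than merely producing many measurables or weaker forms of strength below $\k$. If this direct route proves too delicate, an alternative is to bypass Woodinness and aim instead at a direct contradiction with the rigidity of $K$, by combining a coherent thread of the $j_{\gamma,\eta}$'s (via their derived extenders and universality) into a single nontrivial elementary $K\to K$; organizing this coherence across the $\gamma$'s is itself where the principal technical burden lies.
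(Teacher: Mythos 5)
Your proposal identifies the right overall structure --- show that the core model $K$ cannot exist by playing the ${\sf{HOD}}$-Berkeley property off against $K$'s rigidity --- but the execution is needlessly complicated, and the step you flag as ``the principal technical burden'' can be avoided entirely. You apply the ${\sf{HOD}}$-Berkeley property only to initial segments $K|\gamma$, then attempt to convert the resulting embeddings $j_{\gamma,\eta}$ into extenders lying on the $K$-sequence with sufficient density to witness Woodinness at $\k$ (or, in your fallback, to splice them into a single $j:K\to K$). That universality step is genuinely delicate: an extender derived from an arbitrary $j:K|\gamma\to K|\gamma$ need not be close to $K$ in the sense required for absorption onto the $K$-sequence, and arranging the derived extenders coherently enough to verify the Woodinness criterion would require substantial further work. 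The paper sidesteps all of this. It works with $K:=K^{V_\iota}$, the core model computed inside $V_\iota$ where $\iota$ is the measurable above the Berkeley cardinal $\k$. This $K$ is a transitive \emph{set} of ordinal height $\iota$, is ordinal definable, hence lies in ${\sf{HOD}}$, and contains $\k$ since $\iota>\k$; so $K$ is itself a legal input to the ${\sf{HOD}}$-Berkeley property, which immediately yields a nontrivial $j:K\to K$. That single embedding contradicts the rigidity of $K$ (\cite[Theorem 8.8]{CMIP}), with no need to extract, absorb, or combine extenders. The same theorem then gives $(K^c)^{V_\iota}\models$ ``there is a Woodin cardinal,'' and since $V$ is closed under sharps by Proposition~\ref{gabe}, $\M_1^\#$ is an initial segment of $(K^c)^{V_\iota}$.

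Your argument for ${\sf{Ord}}$-iterability is also too weak as stated: reflecting an iteration tree through a single $\mu$-ultrapower only controls trees of length roughly below $\mu$, not trees of arbitrary length. The paper obtains full ${\sf{Ord}}$-iterability of $\M_1^\#$ from closure of $V$ under sharps (see \cite{HODCoreModel}), which guarantees that $Q$-structures exist as mice at every limit stage, not just at countable stages reflected through $\mu$.
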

\begin{proof} Let $\iota$ be a measurable cardinal above the least ${\sf{HOD}}$-Berkeley cardinal. We first show that the core model $K=_{def}K^{V_\iota}$ does not exist. Towards a contradiction, assume that it does.  Since $K\in\sf{HOD}$, we have a non-trivial embedding $j: K\rightarrow K$. But then \cite[Theorem 8.8]{CMIP} gives a contradiction. It now follows from the same aforementioned theorem that in fact $(K^c)^{V_\iota}\models``$there is a Woodin cardinal," and since V is closed under sharps by \rprop{gabe}, we get that $\M_1^\#\insegeq (K^c)^{V_\iota}$. Because $V$ is closed under sharps, it follows that $\M_1^\#$ is ${\sf{Ord}}$-iterable (see \cite{HODCoreModel}). 
\end{proof}

Combining the arguments for \rprop{gabe} and \rprop{core model}, we get some definable determinacy.

\begin{theorem}\label{pd} Suppose there is a ${\sf{HOD}}$-Berkeley cardinal and a class of measurable cardinals. Then Projective Determinacy holds.
\end{theorem}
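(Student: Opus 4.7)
The plan is to prove the theorem by induction on $n<\omega$, showing that $\M_n^\#(A)$ exists and is ${\sf{Ord}}$-iterable for every set $A$. Once this is established, Projective Determinacy follows from the Martin--Steel--Woodin theorem that $\Pi^1_{n+1}$-determinacy holds whenever $\M_n^\#(x)$ exists and is fully iterable for every real $x$.

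The base case $n=0$ is exactly \rprop{gabe}, while $n=1$ is \rprop{core model}. For the inductive step, assume that for every set $A$, $\M_n^\#(A)$ exists and is ${\sf{Ord}}$-iterable. Fix a set of ordinals $A$ and choose a measurable cardinal $\iota$ above $\sup(A)$ and above the ${\sf{HOD}}$-Berkeley cardinal $\k$. Relativize Steel's $K^c$-construction to the operator $\M_n^\#$ inside $V_\iota$, building over $A$; this makes sense because the inductive hypothesis says $V$ is closed under $\M_n^\#$, and the measurable $\iota$ provides the usual thick class needed to run the construction and absorb its iteration strategy.

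Apply Steel's dichotomy in this relativized setting. In one case, some level of the construction satisfies ``there are $n+1$ Woodin cardinals'' and we extract $\M_{n+1}^\#(A)$ directly, obtaining ${\sf{Ord}}$-iterability from the $\M_n^\#$-closure of $V$ exactly as in the proof of \rprop{core model}. In the other case, no level reaches $n+1$ Woodin cardinals, so the true core model $K=K(\M_n^\#)|\iota$ exists and satisfies the usual covering and rigidity properties. Since $K$ is definable from $\M_n^\#$-closure data, $K\in {\sf{HOD}}$. The ${\sf{HOD}}$-Berkeley property at $\k$ therefore yields, for any sufficiently large $\Sigma_2$-correct $\gg>\iota$, a nontrivial elementary embedding $j: V_\gg^{K}\to V_\gg^{K}$. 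Factoring $j$ through the extender ultrapower of length $\iota+1$ as in the proof of \rprop{gabe} produces a nontrivial $k:K\to K$ with $\cp(k)$ above all extenders used on the $K^c$ side, contradicting the rigidity of $K$ (the $\M_n^\#$-relativized version of \cite[Theorem 8.8]{CMIP}). Hence the first case holds and $\M_{n+1}^\#(A)$ exists and is ${\sf{Ord}}$-iterable.

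The main obstacle is ensuring that the core-model machinery used for $n=1$ in \rprop{core model}—Steel's dichotomy and the rigidity theorem \cite[Theorem 8.8]{CMIP}—genuinely lifts to the setting of premice closed under the operator $\M_n^\#$. This is the heart of the core model induction and is where any subtle obstruction would appear; the ${\sf{HOD}}$-Berkeley hypothesis, however, interacts with rigidity in a uniform way across all $n$, so the step case is essentially a parameterized repetition of the $n=1$ argument once the relativized $K^c$-theory is set up. With the inductive statement in hand, Martin--Steel--Woodin delivers Projective Determinacy.
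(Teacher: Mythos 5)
Your proposal is correct and follows essentially the same route as the paper: a core model induction (in the sense of \cite{St05}) in which the $\mathsf{HOD}$-Berkeley cardinal, together with a measurable above it, supplies a nontrivial embedding of the relativized core model into itself with large critical point, contradicting rigidity and forcing the relativized $K^c$ to reach a Woodin at each stage. The paper carries out only the step producing $\M_2^\#$ in detail and ``leaves the rest to the reader,'' and it is somewhat more explicit than you about the factoring step---applying the $\mathsf{HOD}$-Berkeley property to a structure $M\in\mathsf{HOD}$ in which both $V_\gg^{\mathsf{HOD}}$ and $K$ are definable (so that $j(K)=K$), deriving a length-$\lambda$ extender, and then composing the factor map $k:\pi_F(K)\to K$ with a universality embedding $\sigma:K\to\pi_F(K)$ to obtain $K\to K$ with critical point above $\sup A$---but this is the same mechanism you sketch.
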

\begin{proof} The proof is via the core model induction as in \cite{St05}. We show that $\M_2^\#$ exists and leave the rest to the reader. To show that $\M_2^\#$ exists, it is enough to show that $V$ is closed under the $\M_1^\#$-operator and $K^c\models ``$ There is a Woodin cardinal". The second statement is very much like the proof of \rprop{core model}, and so we only show that $V$ is closed under the $\M_1^\#$-operator. 

As in the proof of \rprop{gabe}, it is enough to show that for every set of ordinals $A\in {\sf{HOD}}$, $\M_1^\#(A)$ exists, and to show this, it is enough to show that for every $A\in {\sf{HOD}}$, $K(A)$ does not exist. 

Fix now $A\in {\sf{HOD}}$, $A\subseteq \l$, and let $\iota>\l$ be a measurable cardinal above the least ${\sf{HOD}}$-Berkeley cardinal. Assume that  $K=K(A)^{V_\iota}$ exists. Notice that $K\in {\sf{HOD}}$. Let $\gg>\iota$ be a $\Sigma_2$-correct cardinal, and let $M\in {\sf{HOD}}$ be such that, letting $a=\{V_\gg^{\sf{HOD}}, K\}$, $a\in M$ and $a$ is definable in $M$ (see \cite[Lemma 3.1]{bagaria2019large}). Let $j': M\rightarrow M$ be non-trivial and elementary. 

Let $j=j'\rest V_\gg^{\sf{HOD}}$. Notice that $j(K)=K$. Let $\k=\cp(j)$, and let $F$ be the $(\k, \l)$-extender derived from $j$. As in the proof of \rprop{gabe}, if $k': Ult(V_\gg^{\sf{HOD}}, F)\rightarrow V_\gg^{\sf{HOD}}$ is the canonical factor map, then $\cp(k')>\l$. Let then $\M=\pi_F(K)$, and set $\pi_F=i$ and $k'\rest \M=k$. We thus have that $i:K\rightarrow \M$ and $k:\M\rightarrow K$. Moreover, $\cp(k)>\l$. 

Because $i:K\rightarrow \M$, it follows that $\M$ is universal among $A$-mice of ordinal height $\iota$, and therefore there is $\sigma: K\rightarrow \M$ such that $\cp(\sigma)>\l$ (see \cite{CMIP}). It follows that $k\circ \sigma :K\rightarrow K$, and so we can get a contradiction as in \rprop{core model}.
\end{proof}

These theorems show that obtaining ${\sf{HOD}}$-Berkeley cardinals requires significant large cardinals. We believe that the proof of \rthm{pd} can be extended to show that $L(\bR)\models {\sf{AD}}$ and the hypothesis that there is a class of measurable cardinals is unnecessary (see \cite{JS13}). But establishing these beliefs is beyond the scope of this paper, and we conclude this discussion with the following conjecture.

\begin{conjecture} Suppose there is a ${\sf{HOD}}$-Berkeley cardinal. Then the minimal model of ${\sf{AD}}_{\bR}+``\Theta$ is a regular cardinal" exists. 
\end{conjecture}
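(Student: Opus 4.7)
The plan is to execute a core model induction in the spirit of \cite{St05} and \cite{JS13}, in which the $\sf{HOD}$-Berkeley cardinal plays at every stage the role that rigidity failure of $K$ played in \rprop{core model} and \rthm{pd}. Schematically, if at stage $\xi$ of the induction the relevant hybrid $K^c$-style construction succeeded in producing a ``tall'' core model $K_\xi\in\sf{HOD}$, then the $\sf{HOD}$-Berkeley cardinal would supply a nontrivial embedding $j:K_\xi\to K_\xi$, a hybrid analogue of \cite[Theorem 8.8]{CMIP} would yield a contradiction, and so the construction must instead produce a new hybrid mouse certifying the next $\sf{AD}^+$-pointclass. Feeding this mouse back into the induction advances $\xi$ by one step, and the goal is to keep pushing $\xi$ until the pointclass generated is exactly the one whose derived model is the minimal ${\sf{AD}}_\bR+``\Theta$ is regular''.

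First, I would close the gap between \rthm{pd} and $L(\bR)\models \sf{AD}$. By \rprop{gabe}, $V$ is already closed under sharps for sets, which is the sole role played by the class of measurable cardinals in \rthm{pd}; removing that hypothesis therefore reduces to a careful rerun of the scales-from-iterability core model induction of \cite{St05} with sharps standing in for the measurables. Each candidate $K(A)$ constructed along the way lies in $\sf{HOD}$, inherits a self-embedding from the $\sf{HOD}$-Berkeley cardinal whose critical point can, via the factoring step of \rprop{gabe}, be arranged above $\sup(A)$, and is thereby ruled out. This produces the hybrid mice $\M_n^{\#,\Sigma}$ for all strategies $\Sigma$ appearing below the current stage and culminates in $L(\bR)\models \sf{AD}$, with a canonical iteration strategy for the minimal proper class model with $\omega$ Woodin cardinals, exactly as in the setup of \rthm{main theorem lr}.

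Next, I would continue the induction past $L(\bR)$ along the Solovay sequence, producing hybrid hod pairs $(\P_\xi,\Sigma_\xi)$ whose derived models exhaust longer and longer initial segments of the target $\sf{AD}_\bR$-model. At stage $\xi$, the candidate $K$ is a hybrid core model built over $\Sigma_\xi$ and lies in $\sf{HOD}_{\Sigma_\xi}$, which is enough for the $\sf{HOD}$-Berkeley cardinal to furnish $j:K\to K$; the Sargsyan--Steel comparison and branch condensation machinery should then convert this into an internal contradiction, forcing a new Woodin cardinal in the next hybrid $K^c$-construction. The induction terminates when the pointclass produced matches the derived model of the minimal ${\sf{AD}}_\bR+``\Theta$ regular'' model, which is the first $\sf{AD}^+$-theory that cannot be captured by a single hod mouse with short extenders.

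The hardest part will be the boundary between $\sf{AD}^+\wedge V=L(\powerset(\bR))$ and $\sf{AD}_\bR$, where the hybrid mice begin to carry long extenders derived from hod-pair strategies and the rigidity/universality theorems invoked above must be extended to this setting. Concretely, one must verify that a $\sf{HOD}$-Berkeley self-embedding of $V$ restricts to a genuine self-embedding of the hybrid $K$, and that the resulting factoring argument survives the presence of long extenders; both points require nontrivial modifications of \cite[Theorem 8.8]{CMIP} in the Sargsyan--Steel framework. A secondary obstacle is pinning down the precise termination stage: the induction must be shown to neither overshoot nor halt prematurely, which requires matching the Solovay sequence of each intermediate model with the hod-pair construction stage, as in \cite{HMMSC} and \cite{SteelCom}.
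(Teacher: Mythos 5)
The statement you are asked to prove is labeled a \emph{conjecture} in the paper, and the authors explicitly say ``establishing these beliefs is beyond the scope of this paper.'' There is no proof of it in the paper to compare against. The authors sketch the same roadmap you do --- continue the core model induction of \rthm{pd} past projective determinacy, using the $\sf{HOD}$-Berkeley cardinal to refute rigidity of the hybrid $K$ at each stage --- but they stop at that informal sketch precisely because the steps you flag as ``the hardest part'' are genuinely open.

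Your proposal is therefore not a proof but a research program, and you say as much in your final paragraph. The two obstacles you identify are real and are not resolved anywhere in your writeup. First, you need an analogue of \cite[Theorem 8.8]{CMIP} (that a universal core model is rigid) in the hybrid/hod-pair setting, including the regime where the mice carry long extenders derived from strategies; this is not a routine relativization, and the paper does not claim it. Second, the factoring trick from \rprop{gabe} gives a self-embedding of $L_\gamma[A]$ with critical point above $\sup A$, but turning this into a self-embedding of the stage-$\xi$ hybrid $K$ --- which lives in $\sf{HOD}_{\Sigma_\xi}$ rather than $\sf{HOD}$ --- requires that $\sf{HOD}$-Berkeleyness transfers to $\sf{HOD}_{\Sigma_\xi}$, and you do not argue this. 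Third, your termination criterion (``the first $\sf{AD}^+$ theory not captured by a single hod mouse with short extenders'') is a heuristic, not a theorem; pinning it down is part of what would need to be proved. In short: your plan is the intended plan, but a plan with its key lemmas explicitly left unproved is not a proof, and the paper itself treats this as open.
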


\section{\rthm{main theorem lr} is optimal}

In this section we use the main idea of \rprop{gabe}, ideas from \cite{JSSS}, and the HOD analysis of $L(\bR)$ (see \cite{HODCoreModel}) to show that \rthm{main theorem lr} cannot be improved, assuming that $V=L(\bR)$. 

\subsection{\textbf{Hod-like pairs.}}
Our strategy for proving \rthm{lr optimal} is the following. Assume $V=L(\powerset(\bR))+{\sf{AD}}$, and suppose there is an embedding $j:{\sf{HOD}}|\Theta\rightarrow {\sf{HOD}}|\Theta$. We want to show that $j$ can be extended to $j^+:{\sf{HOD}}\rightarrow {\sf{HOD}}$. Via the reasoning of \rprop{gabe}, this leads to a contradiction.

To implement our strategy, we need to use more of the HOD analysis than the previous sections required. The HOD analysis that we need is developed in \cite[Chapter 6]{HODCoreModel}, in particular \cite[Theorem 6.1]{HODCoreModel}. Recall from \cite[Theorem 6.1]{HODCoreModel} that assuming $V=L(\bR)$, ${\sf{HOD}}=L[\M_\infty^+, \Lambda]$. While \cite[Theorem 6.1]{HODCoreModel} is proved assuming $\M_\omega^\#$ exists, the proof can also be done by first reflecting and then picking a coarse tuple as we have done in the arguments presented in the previous sections (see \rdef{capturing a cardinal}). The proof simply needs a pair $(\P, \Sigma)$ whose derived model is $L(\bR)$, or $L_\alpha(\bR)$, as we will do below.

The exact meaning of $\M_\infty^+$ and $\Lambda$ are very important for us, and we will set up some notation to discuss these object.

\begin{notation}\label{woodins} Suppose $\P$ is a premouse. Then $(\d^\a_\P: \a\leq \iota)$ denotes the increasing enumeration of the Woodin cardinals of $\P$ and their limits, and $\eta^\a_\P$ denotes the $\P$-successor of $\d^\a_\P$, if it exists. 
\end{notation}

\begin{definition}\label{omega woodins} Suppose $\P$ is a premouse with exactly $\omega$ many Woodin cardinals. Let $\mH_\P$ be the premouse representation of $({\sf{HOD}}|\Theta)^{D(\P, \d^\omega_\P)}$.\footnote{$D(\P, \d^\omega_\P)$ is the derived model of $\P$.} 

We say $\P$ is \textbf{hod-like} if 
\begin{enumerate}
\item $\P\models {\sf{ZFC-Replacement}}$, and
\item there is a tree $\T\in \P$ of limit length such that $\cop(\T)=\mH_\P$ and $\T$ is based on $\P|\d^0_\P$.\footnote{$\cop(\T)$ is the common part of $\T$ which usually is denoted by $\M(\T)$. Since $\M$ is overused in inner model theory, we will use $\cop(\T)$.}
\end{enumerate}
 If $\P$ is hod-like, then we let $\T_\P$ be the normal tree $\T$ such that $\cop(\T)=\mH_\P$.

Suppose $\P$ is a hod-like and $b$ is a branch of $\T_\P$. We say $b$ is \textbf{friendly} to $\P$ if $\pi^\T_b(\d^0_\P)=\Theta^{D(\P, \d^\omega_\P)}$. 

Suppose $\P$ is hod-like and $b$ is friendly to $\P$. We then let $\xi_\P=\lh(\T_\P)$, $b(\P)=\{\eta^\omega_\P+i: i\in b\}$ and
\begin{center}
$\V'(\P, b)=(\P|\eta^\omega_\P+\xi_\P, b(\P))$.
\end{center}
We then say $\V'(\P, b)$ is the \textbf{pre-Varsovian model}\footnote{See \cite{SaSch18}.} induced by $(\P, b)$. We say $(\P, b)$ is hod-like if the core of $\V'(\P, b)$ is defined\footnote{See \cite{OIMT}.} and both its $\Sigma_1$-projectum and projectum are $\d^0_\P$. If $(\P, b)$ is hod-like, then we let $\V(\P, b)$ be the core of $\V'(\P, b)$. 
\end{definition}
We treat $\V(\P, b)$ as a hybrid premouse, see \cite{SchlT}. Next we introduce hod-like pairs.

\begin{definition}\label{res hod like pairs} We say $(\P, \Sigma)$ \textbf{resembles a hod-like pair} if it is a pure mouse pair such that the following conditions hold:
\begin{enumerate}
\item $\Sigma$ is an $\omega_1+1$-iteration strategy.
\item $\P$ is hod-like.
\item If $b=\Sigma(\T_\P)$ then $(\P, b)$ is hod-like.
\end{enumerate}
\end{definition}
To turn pairs that resemble hod-like pairs into true hod-like pairs, we need to impose some conditions which fall naturally out of the HOD analysis. 

\begin{definition}\label{hod like pairs} A pair $(\P, \Sigma)$ is \textbf{a hod-like pair} if it resembles a hod-like pair and the following conditions hold:
\begin{enumerate}
\item \textbf{Self-capturing:} For every cutpoint successor cardinal $\nu$ of $\P$ and ordinal $\gg<\nu$, if $\P$ has no Woodin cardinals in the interval $(\gg, \nu)$, then the fragment of $\Sigma$ that acts on iterations that are based on $\P|\nu$ and are above $\gg$ is in the derived model of $(\P, \Sigma)$. 
\item \textbf{Self-similar:} For every $i\in [1, \omega)$, if $\W$ is the output of the fully backgrounded construction of $\P|\d^i_\P$ in which all extenders used have critical points $>\d^{i-1}_\P$, then $\W$ is a $\Sigma_{\P|\d^0_\P}$-iterate of $\P|\d^0$.
\end{enumerate}
Suppose now that $\P$ is hod-like with exactly $\omega$ many Woodin cardinals. Then $\P$ is \textbf{self-similar} if for every $i\in \omega$ and $\gg\in [\d^i_\P, \d^{i+1}_\P)$, letting $\W^i$ be the output of the fully backgrounded construction of $\P|\d^{i+1}_\P$, there is a normal iteration tree $\T^i\in \P$ on $\P$ such that $\T^i$ is based on $\P|\d^0_\P$ and $\cop(\T^i)=\W^i$.
\end{definition}
Lastly, we introduce the abstract Varsovian models and self-determining Varsovian models.
\begin{definition}\label{varsovian models} We say $\V$ is a \textbf{Varsovian model} if for some hod-like $(\P, b)$ with $\P$ self-similar, $\V=\V'(\P, b)$. If $\V$ is a Varsovian model witnessed by $(\P, b)$, then we let $\X^\V=\P$, $\U^\V=\T_\P$, $\mH^\V=\mH_\P$, $b^\V=b$, and for $i\in \omega$, $(\W_i^\V, \T_i^\V)=(\W_i, \T_i)$ where $(\W_i, \T_i)$ is as in \rdef{hod like pairs}.
\end{definition}

\begin{definition}\label{self determining v-models} Suppose $\V=\V'(\P, b)$ is a Varsovian model. Then $\V$ is \textbf{self-determining} if for each $i\in \omega$, letting $(\W_i, \T_i)=(\W_i^\V, \T_i^\V)$, there is $\U_i\in \P$ such that $\cop(\U_i)=\mH^\V$ and there is a unique pair of branches $(c_i, d_i)$ such that $\pi^{\U^\V}_{b^\V}=\pi^{\U_i}_{d_i}\circ \pi^{\T_i}_{c_i}$.

In the above situation, we let $(\U_i, c_i, d_i)=(\U_i^\V, c_i^\V, d_i^\V)$.
\end{definition}

\begin{definition}\label{varsovian pair} We say that $(\V, \Lambda)$ is a \textbf{Varsovian pair} if $\V=\V'(\P, b)$ is a self-determining Varsovian model and $\Lambda$ is an iteration strategy for $\V$ such that, whenever $\V'$ is a complete $\Lambda$-iterate of $\V$, $\V'$ is self-determining, all the iteration trees $\U^{\V'}, \U_i^{\V'}, \T_i^{\V'}$ and the associated branches $b^{\V'}, c_i^{\V'}, d^{\V'}_i$ are according to $\Lambda_{\V'|\d^0_{\V'}}$.
\end{definition}

\begin{definition}\label{iteration of v-models} Suppose $\V=\V'(\P, b)$ is a Varsovian model and $\Gamma$ is an iteration strategy for $\P|\d^0_\P$. Then $(\V, \Lambda)$ is a $\Gamma$-\textbf{Varsovian pair} if $\V$ has a $\card{\V}^++1$-iteration strategy $\Lambda$ such that $\Lambda_{\V|\d^0_\V}=\Gamma$.

We say $\V$ is $\Gamma$-Varsovian model if there is a unique $\Lambda$ such that $(\V, \Lambda)$ is a $\Gamma$-Varsovian pair.
\end{definition}

The following useful lemma is easy to verify and we leave it to the reader.

\begin{lemma}\label{uniquness of strategies} Suppose $(\V, \Lambda)$ is a Varsovian pair and $i\in \omega$. Let $\T$ be an iteration tree on $\V$ according to $\Lambda$ such that $\T$ has a limit length, $\T$ is based on $(\d^i_\V, \d^{i+1}_\V)$,\footnote{I.e. is above $\d^i_\V$ and is based on $\V|\d^{i+1}_\V$.} and letting $a=\Lambda(\T)$, $a$ is non-dropping. Then for every $\a<\d^0_\V$ and $k\in \omega$, \begin{center}
$\pi^{\T}_a(\pi^{\T^\V_i}_{c^\V_i}(\a))=\pi^{\T^{\V'}_i}_{c^{\V'}_i}(\a)$.
\end{center}
Hence, if $\pi^\T_a(\d^{i+1}_\V)=\d(\T)$, then $a$ is the unique branch $e$ of $\T$ such that $\pi^{\T^{\V'}_i}_{c^{\V'}_i}[\d^0_\V]\subseteq \rge(\pi^\T_e)$.
\end{lemma}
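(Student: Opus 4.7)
The plan is to exploit the internal definability of $c^\V_i$ inside $\V$ afforded by the self-determining structure (\rdef{self determining v-models}), combined with the elementarity of $\pi^\T_a$ and the fact that $\T$ is based above $\d^0_\V$. First, I would record that by the self-determining property, $c^\V_i$ is characterized as the unique branch $c$ of $\T^\V_i$ for which there exists a branch $d$ of $\U_i^\V$ with $\pi^{\U^\V}_{b^\V} = \pi^{\U_i^\V}_d \circ \pi^{\T^\V_i}_c$ on $\d^0_\V$. Since $\U^\V, \U_i^\V, \T^\V_i$ are all sets in $\V$ and $b^\V$ is recorded as part of $\V$'s designated predicate, this characterization is internal to $\V$; consequently, the function $\a \mapsto \pi^{\T^\V_i}_{c^\V_i}(\a)$ for $\a < \d^0_\V$ is $\V$-definable from the Varsovian data of $\V$. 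By the Varsovian-pair hypothesis on $\Lambda$, the same internal computation in $\V' := \M^\T_a$ produces the pair $(\T^{\V'}_i, c^{\V'}_i)$.

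Next, because $\T$ is based on $(\d^i_\V, \d^{i+1}_\V)$, every extender used along $\T$ has critical point at least $\d^i_\V \geq \d^0_\V$, so $\pi^\T_a$ fixes every ordinal $\a < \d^0_\V$. Applying the elementarity of $\pi^\T_a \colon \V \to \V'$ to the $\V$-internal definition of $\pi^{\T^\V_i}_{c^\V_i}(\a)$ then yields the $\V'$-internal definition of $\pi^{\T^{\V'}_i}_{c^{\V'}_i}(\pi^\T_a(\a))$; since $\pi^\T_a(\a) = \a$, the two expressions agree, giving the asserted commutation
$$\pi^\T_a\bigl(\pi^{\T^\V_i}_{c^\V_i}(\a)\bigr) = \pi^{\T^{\V'}_i}_{c^{\V'}_i}(\a).$$

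For the uniqueness clause, suppose $\pi^\T_a(\d^i_\V) = \d(\T)$. The first part immediately shows $\pi^{\T^{\V'}_i}_{c^{\V'}_i}[\d^0_\V] = \pi^\T_a\bigl[\pi^{\T^\V_i}_{c^\V_i}[\d^0_\V]\bigr] \subseteq \rge(\pi^\T_a)$, so $a$ itself witnesses the property. To rule out a competing branch $e$ of $\T$ with the same range inclusion, I would note that $\pi^{\T^{\V'}_i}_{c^{\V'}_i}[\d^0_\V]$ is cofinal in $\pi^{\T^{\V'}_i}_{c^{\V'}_i}(\d^0_\V)$ and, by the hypothesis on $a$, cofinal in $\d(\T)$; hence $\rge(\pi^\T_e)$ is cofinal in $\d(\T)$, forcing $e$ to be non-dropping. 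Standard branch-identification arguments for normal iteration trees (two non-dropping cofinal branches whose embeddings agree on a cofinal subset of $\d(\T)$ must coincide) then force $e = a$.

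The main obstacle I expect is in the first step, namely justifying that $c^\V_i$ is genuinely internally definable inside $\V$ from its designated data. This amounts to verifying that the direct-limit embedding $\pi^{\U^\V}_{b^\V}$ acts computably in $\V$ on ordinals below $\d^0_\V$, which relies on $b^\V$ being recorded in $\V$'s hybrid-premouse structure and on $\U^\V, \U_i^\V, \T^\V_i$ being set-sized objects in $\V$. Once this internal definability is secured, the remainder is a clean application of elementarity together with the Varsovian-pair compatibility condition that $\Lambda$ respects the self-determining structure of every iterate.
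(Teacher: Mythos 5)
Since the paper dismisses this lemma with ``easy to verify and we leave it to the reader,'' there is no paper proof to compare against, and your attempt must be judged on its own. The overall plan --- internal definability of the self-determining data, elementarity of $\pi^\T_a$, and $\pi^\T_a\rest\d^0_\V=\mathrm{id}$ --- is the natural and, I believe, intended route. But two steps as written have gaps.

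First, the internal-definability step. You correctly flag it, but your diagnosis misses the actual difficulty: $b^\V$ is indeed recorded in $\V$'s hybrid predicate, so $\pi^{\U^\V}_{b^\V}\rest\d^0_\V$ is $\V$-definable, and $\U^\V,\U^\V_i,\T^\V_i$ are bona fide elements of $\P\subseteq\V$. What is \emph{not} automatic is that the branches $c^\V_i,d^\V_i$ lie in $\V$. They are not part of the recorded data --- by \rdef{varsovian pair} they are only required to be ``according to $\Lambda_{\V|\d^0_\V}$,'' a $V$-level condition --- and the uniqueness clause of \rdef{self determining v-models} is a $V$-statement quantifying over branches in $V$. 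For $\V$ to recover $c^\V_i$ (or even just $\pi^{\T^\V_i}_{c^\V_i}\rest\d^0_\V$) via ``the unique pair making the diagram commute,'' $\V$ must actually contain that pair, and that needs a separate argument (e.g.\ a $\Q$-structure or definability-from-$\W^\V_i$ computation internal to $\P$). Without it, the transfer of the definition through $\pi^\T_a$ doesn't get off the ground.

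Second, the uniqueness clause. You invoke the fact that two non-dropping cofinal branches ``whose embeddings agree on a cofinal subset of $\d(\T)$'' must coincide. But the hypothesis on a competing branch $e$ is only that $\pi^{\T^{\V'}_i}_{c^{\V'}_i}[\d^0_\V]\subseteq\rge(\pi^\T_e)$; you have no reason (yet) to conclude that $\pi^\T_e$ and $\pi^\T_a$ \emph{agree} on any cofinal set, only that their ranges share a cofinal subset of $\d(\T)$. These are different statements, and the one you cite does not apply as written. What one actually needs here is the Martin--Steel zipper/branch-uniqueness argument: if $a\neq e$ are two cofinal branches of a normal tree, then (using the interleaving of exit extenders past the last common node) $\rge(\pi^\T_a)\cap\rge(\pi^\T_e)\cap\d(\T)$ is bounded in $\d(\T)$. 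That fact, applied with $\pi^\T_a(\d^i_\V)=\d(\T)$ and the cofinality of $\pi^{\T^{\V'}_i}_{c^{\V'}_i}[\d^0_\V]$ in $\d(\T)$, does yield $e=a$ --- but that is not the fact you stated, and the substitution is not cosmetic. You should also verify the cofinality claim itself, i.e.\ that $\pi^{\T^\V_i}_{c^\V_i}(\d^0_\V)=\d^{i+1}_\V$ (so that its $\pi^\T_a$-image is $\d(\T)$); this uses the identification of $\W^\V_i$ as the fully backgrounded construction of $\V|\d^{i+1}_\V$ above $\d^i_\V$ and is not automatic.
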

The last clause of \rlem{uniquness of strategies} is important because it shows that $\Lambda_{\V|\d^0_\V}$ determines $\Lambda$. Indeed, $\pi^{\T^{\V'}_i}_{c^{\V'}_i}$ depends only on $\Lambda_{\V|\d^0_\V}$ and $\cop(\T)$. We thus have that if $(\V, \Lambda)$ is a Varsovian pair, then $\V$ is $\Lambda_{\V|\d^0_\V}$-Varsovian.

We finish this section by introducing the universes that are the companions of hod-like pairs.
\begin{definition}\label{companions} We say that $M$ and $(\P, \Sigma)$ are \textbf{companions} if the following conditions hold.
\begin{enumerate}
\item Letting $\a={\sf{Ord}}\cap M$, $M=L_\a(\bR)$, and for some sentence $\phi$, $\a$ is the least $\b$ such that $L_\b(\bR)\models ``{\sf{ZF-Replacement}}+\phi."$
\item $M$ is the derived model of $(\P, \Sigma)$.
\item There is a sjs $(B_i: i<\omega)\subseteq \powerset(\bR)\cap L_\a(\bR)$ such that for each $i<\omega$, $B_i$ is ordinal definable in $M$, and $\Sigma$ is the unique $\omega_1+1$-iteration strategy $\Lambda$ such that for every $i\in \omega$, $\Lambda$ respects $B_i$.
\end{enumerate}
We say $M$ has a companion if there is a pair $(\P, \Sigma)$ such that $M$ and $(\P, \Sigma)$ are companions.
\end{definition}
The next theorem, the main result on companions, can be proved using the methods of \cite{MSCR}.
\begin{theorem}\label{companions thm} Assume $V=L(\bR)+{\sf{AD}}$. Suppose $\a$ is such that for some sentence $\phi$, $\a$ is the least $\b$ such that $L_\b(\bR)\models ``{\sf{ZF-Replacement}}+\phi."$ Then $L_\a(\bR)$ has a companion.
\end{theorem}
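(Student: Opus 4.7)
The plan is to adapt the fully-backgrounded construction of \cite[Lemma 2.5]{MSCR}, using a self-justifying system extracted from Steel's scale analysis of $L(\bR)$ at $\alpha$. By the minimality hypothesis on $\alpha$, standard arguments place $\alpha$ at the end of a weak $\Sigma^2_1$-gap in $L(\bR)$. The scale analysis of \cite{ScalesLR} then furnishes a scale on a universal $\Sigma^2_1$-set of $L_\alpha(\bR)$ whose norms are ordinal definable there; closing under alternate complementation and further scale analysis (possible because $\alpha$ ends a gap) yields a self-justifying system $\vec{B}=(B_i)_{i<\omega}\subseteq\powerset(\bR)\cap L_\alpha(\bR)$, each $B_i$ ordinal definable in $L_\alpha(\bR)$. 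Setting $\Gamma=(\Sigma^2_1)^{L_\alpha(\bR)}$, the sequence $\vec{B}$ serves as a $\Gamma$-condensing sequence in the sense of Section~4.

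Next, I would invoke \rthm{coarse capturing} with the pointclass $\Gamma$ and the sjs $\vec{B}$ to obtain a $\Gamma$-excellent pair $(R,\Psi)$ (with $\Psi$ trivial since $V=L(\bR)$) whose strategy respects each $B_i$. Feeding this into the proof of \rthm{woodins thm}, the coarse tuple can be built so that its reflection level is precisely $\alpha$; this is possible because the construction picks any $(T_0,\Psi,\phi)$-reflection point, and our hypothesis on $\alpha$ supplies exactly such a $\phi$. For any $x\in\dom(H)$, $(\P_x,\Sigma_x)=H(x)$ is a pure mouse pair with $\omega$ Woodin cardinals whose derived model is $L_\alpha(\bR)$. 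By \rrem{changing psi to fine structural one}, after passing to a suitable $\Sigma_x$-iterate, I may assume $\Sigma_x$ respects each $B_i$ via preservation of the term relations $\tau^{\P_x}_{B_i,k}$. Setting $(\P,\Sigma)=(\P_x,\Sigma_x)$ establishes clauses (1) and (2) of \rdef{companions}.

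Finally, the uniqueness of $\Sigma$ among $\omega_1{+}1$-strategies $\Lambda$ for $\P$ that respect every $B_i$ follows from the branch-identifying clause \rthm{coarse capturing}(5). Given a limit-length iteration tree $\T$ on $\P$, the branch $\Lambda(\T)$ is forced to be either the unique cofinal branch $c$ with $C_\Gamma(\M(\T))\in\M^\T_c$ (when $\d(\T)$ is not Woodin in $C_\Gamma(\M(\T))$), or the unique cofinal branch $c$ with $\M^\T_c=C^\omega_\Gamma(\M(\T))$ and $\pi^\T_c(\tau^{\P}_{B_i})=\tau^{\M^\T_c}_{B_i}$ for every $i\in\omega$ (otherwise). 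Both characterizations depend only on $\vec{B}$ and $\T$, so any two strategies respecting all $B_i$ must agree, and hence $\Sigma$ is the unique such strategy.

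The main obstacle is calibrating the construction so that the derived-model reflection level of the coarse tuple lands exactly at the prescribed $\alpha$. This requires matching the specific sentence $\phi$ from the hypothesis with the sentence used in extracting the sjs from the scale analysis and in choosing the reflection point inside \rthm{woodins thm}. The calibration is technical but follows the framework developed in \cite{MSCR} and the hybrid generalizations of \cite{SchlT}; the rest of the argument is a direct translation.
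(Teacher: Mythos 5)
Your proposal takes essentially the same route as the paper, whose proof of this theorem consists only of a citation to \cite{MSCR} together with the remark that follows, and that remark outlines precisely the three moves you spell out: build $(\P,\Sigma)$ with derived model $L_\alpha(\bR)$ via the hod-pair construction of \cite[Lemma~2.5]{MSCR}, extract a sjs from the scale analysis of $L(\bR)$ at $\alpha$, and get uniqueness from $\Sigma$ respecting that sjs. One small imprecision worth noting: the branch-identifying clause you invoke from \rthm{coarse capturing}(5) characterizes the strategy $\Psi$ of the coarse $\Gamma$-Woodin $R$, not the strategy $\Sigma$ of the fine-structural mouse $\P$; for $\P$ the analogous uniqueness argument runs through $\Q$-structures and term-relation condensation as supplied by the HOD analysis cited in the paper's remark, but the underlying idea is the same and the calibration issue you flag at the end is exactly the technical work that \cite{MSCR} is being invoked to supply.
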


\begin{remark} In \rthm{companions thm}, the desired $(\P, \Sigma)$ is built using hod pair constructions as in \cite{MSCR}. Clause 3 of \rdef{companions} can be achieved by fixing a sjs system for $L_\a(\bR)$, which can be done by the results of \cite{St08b}, \cite{Jackson}, \cite{HMMSC}, \cite{Trev1} and \cite{Trev2}. Clause 2 of \rdef{hod like pairs} is more or less automatic and has been treated extensively in the literature, e.g. \cite{NegRes}. 
\end{remark}

\subsection{\textbf{On HOD analysis.}}
We exposit the HOD analysis of $L_\a(\bR)$. Fix some ordinal $\a$ such that $L_\a(\bR)\models {\sf{ZF-Replacement}}$. We allow $\a={\sf{Ord}}$.

Recall from \cite[Theorem 6.1]{HODCoreModel} that assuming $V=L(\bR)$, ${\sf{HOD}}=L[\M_\infty^+, \Lambda]$. Hence ${\sf{HOD}}^{L_\a(\bR)}=L[\M_\infty^{ +, \a}, \Lambda^\a]$, where $\M_\infty^{+, \a}$ is a hod-like premouse with exactly $\omega$-Woodin cardinals, ${\sf{ORD}}\cap \M_\infty^{+, \a}=\a$ and $\M^{+, \a}_\infty$ is definable in $L_\a(\bR)$ via a direct limit construction (see \cite{HODCoreModel}). We set $\M^\a=\M_{\infty}^{+, \a}$. $\Lambda^\a$ is a partial iteration strategy for $\M^\a$ that acts on iteration trees which are in $\M^\a|\d^\omega_{\M^\a}$ and are based on $\M^\a|\d^0_{\M^\a}$. $\Lambda^\a$ induces a branch $b^\a$ for $\T^\a=_{def}\T_{\M^\a}$ that is friendly to $\M^\a$. In fact, $\Lambda^\a$ and $b^\a$ are definable from each other (see \cite{HODCoreModel}). We set $\V'_\a=\V'(\M^\a, b^\a)$ and $\V_\a=\V(\M^\a, b^\a)$.\footnote{Our arguments below will show that $(\M^\a, b^\a)$ is hod-like. See \rprop{complete rep of hod}.} We then have that ${\sf{HOD}}^{L_\a(\bR)}=L_\a[\V_\a]$.\footnote{This is because if $A$ is the Vopenka algebra of $L_\a(\bR)$ and $A'$ is the Vopenka algebra of $D(\M^\a, \d^\omega_{\M^\a})$, then we have that $\xi\in A$ if and only if $\pi^{\T^\a}_{b^\a}(\xi)\in A'$. Thus $A$ is definable in $L[\V'_\a]$. The same procedure also defines $A$ in $L[\V_\a]$.} In fact more is true. Below and elsewhere, when studying objects like $\V=_{def}\V(\P, b)$, we will let $\X^\V=\P$, $\T^\V=\T_\P$ and $b^\V=b$.

\begin{proposition}\label{complete rep of hod} Suppose $L_\a(\bR)$ and $(\P, \Sigma)$ are companions. There is a Varsovian pair $(\R, \Psi)$ such that $\V_\a$ is the direct limit of all complete $\Psi$-iterates $\Q$ of $\R$ such that $\T_{\R, \Q}$ is based on $\R|\d^0_\R$.
\end{proposition}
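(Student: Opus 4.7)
The plan is to take $\R=\V(\P,b)$ where $b=\Sigma(\T_\P)$, and let $\Psi$ be the iteration strategy for $\R$ induced by $\Sigma$. The content of the proposition is then that this choice is well-defined (so that $(\R,\Psi)$ is a Varsovian pair) and that the direct limit of the appropriate iterates recovers $\V_\a$.

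First I would verify that $(\P,\Sigma)$ resembles a hod-like pair. Since $L_\a(\bR)$ and $(\P,\Sigma)$ are companions, $\P$ has exactly $\omega$ Woodin cardinals and $L_\a(\bR)$ is the derived model at $\d^\omega_\P$. By the HOD analysis, as developed in \cite{HODCoreModel} and the references listed after \rdef{companions}, the direct limit of all complete $\Sigma_{\P|\d^0_\P}$-iterates of $\P|\d^0_\P$ has universe $\mH_\P$, and this direct limit is computed by a single normal tree $\T\in \P$ based on $\P|\d^0_\P$ with $\cop(\T)=\mH_\P$; set $\T_\P=\T$. The branch $b=\Sigma(\T_\P)$ sends $\d^0_\P$ to $\pi^{\T_\P}_b(\d^0_\P)=\Theta^{D(\P,\d^\omega_\P)}$ because the direct limit embedding agrees with the iteration embedding along $b$, so $b$ is friendly to $\P$. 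Standard fine-structural arguments (as in \cite{OIMT}) then give that the core of $\V'(\P,b)$ is defined with $\Sigma_1$-projectum and projectum equal to $\d^0_\P$, so $(\P,b)$ is hod-like in the sense of \rdef{omega woodins}. The self-capturing and self-similarity clauses in \rdef{hod like pairs} follow from the companions hypothesis and the standard construction of the backgrounded $\W_i$'s at each Woodin level of $\P$.

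Next I would define $\Psi$ and check that $(\V(\P,b),\Psi)$ is a Varsovian pair. On trees based on $\R|\d^0_\R=\P|\d^0_\P$, put $\Psi=\Sigma_{\P|\d^0_\P}$. Above $\d^0_\R$, \rlem{uniquness of strategies} pins down $\Psi$ uniquely: given a tree $\T$ based on $(\d^i_\R,\d^{i+1}_\R)$ for some $i$, the branch $a=\Psi(\T)$ is forced to be the unique cofinal branch $e$ with $\pi^{\T^{\R'}_i}_{c^{\R'}_i}[\d^0_\R]\subseteq \rge(\pi^\T_e)$ (where $\R'$ is the last model along $e$), and the existence of such a branch follows from $\Sigma$-iterability of $\P$ by the standard argument for hybrid mice, since the trees $\U_i^\R,\T_i^\R$ and their branches $c_i^\R,d_i^\R$ are all according to $\Sigma_{\P|\d^0_\P}$. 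This simultaneously verifies self-determination of $\R$ and the preservation of self-determination under $\Psi$-iteration, so $(\R,\Psi)$ is a Varsovian pair.

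Finally I would compute the direct limit. A complete $\Psi$-iterate $\Q$ of $\R$ whose tree is based on $\R|\d^0_\R$ is, by the definition of $\Psi$ on that part, the same data as a complete $\Sigma_{\P|\d^0_\P}$-iterate of $\P|\d^0_\P$ together with the induced pre-Varsovian superstructure $\V'(\cdot,b_\Q)$ pulled along. The direct limit of the $\Q|\d^0_\Q$'s is therefore $\M^\a|\d^0_{\M^\a}$ by the definition of $\M^\a$ in the HOD analysis, and the pre-Varsovian superstructures direct limit to $\V'(\M^\a,b^\a)=\V'_\a$; after passing to cores this becomes $\V_\a$. The main obstacle in carrying this out is the verification that the pre-Varsovian branch data commutes with the direct limit maps, i.e.~that for the direct limit map $\pi^\Psi_{\R,\infty}$ one has $\pi^\Psi_{\R,\infty}(b^\R)=b^{\M^\a}$; this is where \rlem{uniquness of strategies} and the self-determining property are crucial, since they show that the branch at each stage is definable from $\cop(\T_i)$ and the strategy $\Sigma_{\P|\d^0_\P}$, both of which are preserved in the direct limit.
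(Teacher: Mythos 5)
Your plan — take $\R=\V(\P,b)$ directly with $b=\Sigma(\T_\P)$ and let $\Psi$ be the strategy induced by $\Sigma$ — is not what the paper does, and the divergence matters. The paper first fixes a coarse tuple (via \rthm{woodins thm}; in $L(\bR)$ one can take $(R_0,\Psi_0)=\emptyset$) and picks $x\in\dom(H)$ so that, setting $(\Q,\Lambda)=H(x)$, the strategy $\Sigma$ is Suslin–co-Suslin captured by $(\Q,\Lambda)$. It then replaces $\P$ by a complete $\Sigma$-iterate $\W$ arising from the fully backgrounded construction of $\Q$, sets $\Phi=\Sigma_\W$, and takes $\R'=\V'(\W,b)$ with $b=\Phi(\T_\W)$. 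The whole point of the detour through $\Q$ and the backgrounded construction is that the results of \cite{HMMSC} and \cite{SteelCom} then apply to show that $\W$ is hod-like, that $(\W,b)$ is hod-like, and that $\R'$ has an iteration strategy $\Psi'$ (induced by $\Lambda$) under which $b^{\R''}$ is coherently moved (i.e.~$b^{\R''}$ is according to $\Psi'_{\X^{\R''}}=\Sigma_{\X^{\R''}}$ for every complete iterate $\R''$). From $\M^\a=\M_\infty(\P,\Sigma)$ one then gets $\V'_\a=\M_\infty(\R',\Psi')$, and passing to cores finishes the argument.

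The concrete gap in your writeup is the sentence asserting that ``standard fine-structural arguments (as in \cite{OIMT})'' give that the core of $\V'(\P,b)$ is defined with $\Sigma_1$-projectum and projectum equal to $\d^0_\P$, together with the later claim that existence of the required branch above $\d^0_\R$ ``follows from $\Sigma$-iterability of $\P$ by the standard argument for hybrid mice.'' Neither claim is supplied by \cite{OIMT}, which does not treat the hybrid structure $\V'(\P,b)$; establishing soundness, the projectum computation, and $\Psi$-iterability of the hybrid premouse (with the branch data in the strategy predicate preserved along iterations, so that self-determination persists) is exactly the hard content of this proposition, and it is precisely what the paper's passage to the backgrounded $\W$ inside $\Q$ is designed to import from \cite{HMMSC} and \cite{SteelCom}. \rlem{uniquness of strategies} only gives uniqueness of a putative strategy above $\d^0_\R$, not its existence, and the companions hypothesis (that $\Sigma$ respects an sjs) is used in the paper to get the capturing pair $(\Q,\Lambda)$, not to directly yield these fine-structural facts about $\V'(\P,b)$. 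Your direct-limit computation in the last paragraph is essentially the paper's final step, but it only goes through once the preceding claims are secured, which your argument does not do. If you want to work with $\P$ itself rather than $\W$, you would have to argue separately that $\V'(\P,b)$ embeds appropriately into $\V'(\W,b_\W)$ and pull back the fine structure and iterability, which brings you back to introducing $\W$ anyway.
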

\begin{proof} First find some coarse tuple $(R_0, \Psi_0, H, \a')$ that absorbs $\a$, as in \rthm{woodins thm}.\footnote{Because we are working in $L(\bR)$, we in fact can assume that $(R_0, \Psi_0)=\emptyset$.} Next, let $x\in \dom(H)$ be such that, letting $(\Q, \Lambda)=H(x)$, $\Sigma$ is Suslin, co-Suslin captured by $(\Q, \Lambda)$. 

Following \cite{HMMSC} and \cite{SteelCom}, we can find a complete $\Sigma$-iterate $\W$ that is built using the fully backgrounded construction of $\Q$. Let $\Phi=\Sigma_{\W}$. It follows from the results of \cite{HMMSC} and \cite{SteelCom} that $\Phi$ is the strategy of $\W$ induced by $\Lambda$. It follows from \cite{HMMSC} and \cite{SteelCom} that $\W$ is hod-like, and if $b=\Phi(\T_\W)$, then $(\W, b)$ is hod-like. Set $\R'=\V'(\W, b)$, and let $\Psi'$ be the strategy of $\R'$. 

It follows that whenever $\R''$ is a complete $\Psi'$-iterate of $\R'$, $b^{\R''}$ is according to $\Psi'_{\X^{\R''}}=\Sigma_{\X^{\R''}}$. Because $\M^\a=\M_\infty(\P, \Sigma)$, we have that $\V'_\a=\M_\infty(\R', \Psi')$. Hence, the core of $\V'_\a$ is defined, and letting $\R=\V(\W, b)$ and $\Psi$ be the strategy of $\R$ induced by $\Psi'$, $\V_\a$ is the direct limit of all complete $\Psi$-iterates $\Q$ of $\R$ such that $\T_{\R, \Q}$ is based on $\R|\d^0_\R$.\footnote{This argument also shows that the projectum of $\R$ is $\d^0_\R$.}
\end{proof}

\subsection{\textbf{\rthm{main theorem lr} cannot be improved}}
\begin{theorem}\label{lr optimal} Assume $V=L(\bR)+{\sf{AD}}$. Let $\mH$ be the premouse representation of $V_\Theta^{\sf{HOD}}$, and suppose $j: \mH\rightarrow \mH$ is elementary. Then $j=id$.
\end{theorem}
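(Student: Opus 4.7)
Our plan, following the blueprint of \rprop{gabe} and combining it with the HOD analysis of this section together with rigidity-style arguments from \cite{JSSS}, is to lift any nontrivial $j:\mathcal{H}\to\mathcal{H}$ to a nontrivial $j^+:{\sf{HOD}}\to{\sf{HOD}}$, and then derive a contradiction via Woodin's proof of the Kunen Inconsistency inside ${\sf{HOD}}$, which satisfies ${\sf{ZFC}}$. Assume toward a contradiction that $j$ is nontrivial with $\cp(j)=\k<\Theta$; as in the proof of \rthm{main theorem lr}, by minimizing we may take $j$ to be the ${\sf{OD}}_{x_0}$-least such embedding for some real $x_0$.

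The main step will be to extend $j$ to $j^+:{\sf{HOD}}\to{\sf{HOD}}$. By \rprop{complete rep of hod} and \cite[Theorem 6.1]{HODCoreModel}, ${\sf{HOD}}=L[\mathcal{M}^+_\infty,\Lambda]$, where $\mathcal{M}^+_\infty$ is a hod-like premouse with Woodin cardinals $\Theta=\d^0<\d^1<\cdots$, and $\mathcal{H}$ is the Varsovian initial segment of ${\sf{HOD}}$ of height $\Theta$. The self-similarity clause of \rdef{hod like pairs} identifies each $\mathcal{M}^+_\infty|\d^{i+1}$ as (the hull of) a $\Lambda$-iterate of $\mathcal{H}$ produced by a fully backgrounded construction, so that $\mathcal{M}^+_\infty$ above $\Theta$ is canonically determined by $(\mathcal{H},\Lambda)$; the self-determining property and \rlem{uniquness of strategies} in turn exhibit $\Lambda$ itself as determined by branch choices already encoded into $\mathcal{H}$, via the canonical factorization of $\pi^{\mathcal{U}^\mathcal{V}}_{b^\mathcal{V}}$ through the trees $\mathcal{T}^\mathcal{V}_i,\mathcal{U}^\mathcal{V}_i$. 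Elementarity of $j$ on $\mathcal{H}$ forces it to preserve both the branch-selection condition defining $\Lambda$ and the backgrounded constructions recovering $\mathcal{M}^+_\infty$, so $j$ propagates canonically up the direct-limit system to an elementary $j^+:{\sf{HOD}}\to{\sf{HOD}}$.

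With $j^+$ in hand, the argument of \rprop{gabe} applies inside ${\sf{HOD}}$. Let $\l=\sup_{n<\omega}(j^+)^n(\k)$ and let $E$ be the $(\k,\l+1)$-extender derived from $j^+$; because $j^+$ is ${\sf{OD}}_{x_0}$ and obtained uniformly from $j$, a standard Vopenka-style homogeneity argument places $E$ in ${\sf{HOD}}$. Form $j_E:{\sf{HOD}}\to N={\rm Ult}({\sf{HOD}},E)$ and the factor $k:N\to{\sf{HOD}}$ satisfying $k\circ j_E=j^+$. If $k=\mathrm{id}$ then $j^+=j_E$ is definable over ${\sf{HOD}}$, contradicting Kunen inside the ${\sf{ZFC}}$-model ${\sf{HOD}}$; otherwise $\cp(k)>\l$, and restricting $k$ to $L_\gg[\vec A]$ for $\vec A\in{\sf{HOD}}$ coding $\mathcal{H}|\l$ and $\gg$ sufficiently correct yields a nontrivial $k:L_\gg[\vec A]\to L_\gg[\vec A]$ with $\cp(k)>\sup\vec A$, contradicting the fine-structural rigidity of the relevant initial segment of ${\sf{HOD}}$ established in \cite{JSSS}. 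The principal obstacle is the extension step in the second paragraph: verifying that every piece of data entering into $\Lambda$ and into the reconstruction of $\mathcal{M}^+_\infty$ above $\Theta$ is first-order definable from the extender sequence of $\mathcal{H}$, so that an abstract elementary $j:\mathcal{H}\to\mathcal{H}$ automatically commutes with all of it.
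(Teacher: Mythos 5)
Your proposal does identify the correct overall blueprint (extend $j$ to a self-embedding of ${\sf{HOD}}$, then run a Goldberg/Kunen argument), but there are two genuine gaps, one in the extension step and one in the closing contradiction.

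For the extension step, you assert that because $\M^+_\infty$ above $\Theta$ is recoverable from $(\mH,\Lambda)$ via self-similarity, and $\Lambda$ is determined by the branch data encoded in the Varsovian model, elementarity of $j:\mH\to\mH$ ``propagates canonically up the direct-limit system'' to give $j^+:{\sf{HOD}}\to{\sf{HOD}}$. This conflates the heuristic with the proof. What actually has to be shown is \rlem{extension}: that $Ult(L[\V_{\sf{Ord}}],E)$ is \emph{well-founded} and \emph{equal to} $L[\V_{\sf{Ord}}]$, where $E$ is the extender derived from $j$. Neither well-foundedness nor the identity of the ultrapower follows from ``respecting'' the construction. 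The paper establishes these by a minimal-counterexample reflection: take the least $\alpha$ (a limit of such) with $L_\alpha(\bR)\models{\sf ZF\text{-}Rep}+\neg\phi$, apply \rprop{complete rep of hod} to get a Varsovian pair $(\R,\Psi)$, reflect $j$ along a $\Sigma_1$-elementary $\sigma:L_\gamma(\bR)\to L_\beta(\bR)$, and then verify that $\V'_\gamma\in\M^\alpha$, that the relevant fragment of the pullback strategy $\Phi'$ lives in $\M^\alpha$ (\rcl{1}), and — crucially — that $\V_\gamma$ is the \emph{unique} $\Theta^\gamma$-sound $\Lambda_{\Theta^\gamma}$-Varsovian model in $\M^\alpha|\Theta^\alpha$ (\rcl{2}), so that $\pi^{L_\gamma[\V_\gamma]}_F(\V_\gamma)=\V_\gamma$ (\rcl{3}). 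Well-foundedness comes for free from $\V'_\gamma\in\M^\alpha$ together with the well-foundedness of $Ult(\M^\alpha,F)$, which holds because $F$ is derived from the genuine embedding $j$. None of this is visible in your sketch, and it is not a formality: without the uniqueness claim there is no reason the ultrapower should land back on $\V_\gamma$.

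For the final contradiction, two issues. First, the claim that ``a standard Vopenka-style homogeneity argument places $E$ in ${\sf{HOD}}$'' is both unnecessary and likely false: $E$ is $\mathsf{OD}_{x_0}$, not $\mathsf{OD}$, and Goldberg's argument only needs $E\in V$; the relevant observation in \rprop{gabe} is that $j$ itself is \emph{not} definable over $V$, since otherwise it would lie in a small generic extension of ${\sf{HOD}}$, contradicting Woodin's proof of the Kunen inconsistency. Second, your invocation of ``fine-structural rigidity \ldots established in \cite{JSSS}'' does not close the argument: obtaining a nontrivial $k:L_\gamma[\vec A]\to L_\gamma[\vec A]$ with $\cp(k)>\sup\vec A$ is not itself contradictory — it simply shows $\vec A^\#$ exists. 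The paper closes the loop by taking $A$ to code all of $\V_{\sf{Ord}}$, so that $A^\#$ yields $k:{\sf{HOD}}\to{\sf{HOD}}$ with $\cp(k)>\Theta$, and then — using that $L(\bR)$ is a symmetric extension of ${\sf{HOD}}$ by a poset of size $\Theta$ — lifts $k$ to $k^+:L(\bR)\to L(\bR)$ with $\cp(k^+)>\Theta$. Since every set in $L(\bR)$ is definable from a real and an ordinal below $\Theta$, this forces $k^+=\mathrm{id}$, the actual contradiction. Your argument terminates one step too early.
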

\begin{proof}
It is a well-known theorem of Woodin that in $L(\bR)$, $\mH=L[A]$, where $A\subseteq \Theta$ is the set of ordinals coding the Vopenka algebra in some natural way (see \cite{ADPlusBook}, \cite{Tr14} or \cite{HODCoreModel}). We now want to show that $j$ can be extended to $j^+: L[A]\rightarrow L[A]$. Because $j^+$ cannot be added to HOD by a set forcing, we can then use the proof of \rprop{gabe} to show that in fact $A^\#$ exists. We then have an embedding $k: {\sf{HOD}}\rightarrow {\sf{HOD}}$ with $\cp(k)>\Theta$, which then induces $k^+: L(\bR)\rightarrow L(\bR)$. This is because $L(\bR)$ is a symmetric extension of ${\sf{HOD}}$ by a poset of size $\Theta$ (see \cite{ADPlusBook}). Below $\Theta^\gg=\Theta^{L_\gg(\bR)}$.

\begin{lemma}\label{extension} Let $E$ be the extender derived from $j$.\footnote{More precisely, $(a, B) \in E$ if and only if $a\in \Theta^{<\omega}$, $B\in \mH$ and $a\in j(B)$.} Then $Ult(L[\V_{{\sf{Ord}}}], E)$ is well founded and is equal to $L[\V_{{\sf{Ord}}}]$. 
\end{lemma}
\begin{proof}
 Let $\phi$ be the sentence we are trying to prove. Towards a contradiction, assume $\phi$ is false. Let $\a$ be the least $\gg$ such that $L_\gg(\bR)\models ``{\sf{ZF-Replacement}}+\neg \phi"$ and $\gg$ is a limit of ordinals $\b$ such that $L_\b(\bR)\models ``{\sf{ZF-Replacement}}+\neg \phi."$ Let $(\R, \Psi)$ be as in \rprop{complete rep of hod} applied to $\a$. 

We now reflect inside $L_\a(\bR)$ and find\\\\
(1.1) $\b\in (\Theta^\a, \a)$ such that $L_\b(\bR)\models ``{\sf{ZF-Replacement}}+\neg \phi,"$ and\\
(1.2) $\gg<\Theta^\a$ and $\sigma: L_\gg(\bR)\rightarrow L_\b(\bR)$ such that $j\in \rge(\sigma)$ and $\Theta^\gg$ is a regular cardinal.\footnote{\cite{KoelWoodin} shows that $\Theta^\a$ is a Mahlo cardinal in $L_\a(\bR)$. Indeed, if $C\in L_\a(\bR)$ is a club subset of $\Theta^\a$ and $\d<\Theta^\a$ is the least such that $L_\d(\bR, C)\prec_1^\bR L_{\Theta^\a}(\bR, C)$, then $\d$ is a measurable cardinal as shown in \cite{KoelWoodin}, and clearly, $\d\in C$. To get such an elementary embedding, notice that we can find, using ${\sf{DC}}$ and the usual argument for building Skolem hulls, a countable $X\prec L_\b(\bR)$ such that for any $\zeta<\Theta^\a$, letting $\Delta_\zeta=\{B\subseteq \bR: w(B)<\zeta\}$ and  $X[\Delta_\zeta]=\{f(u): u\in \Delta_\zeta \wedge f\in X\}$, $X[\Delta_\zeta]\prec_1^\bR L_\b(\bR)$.}\\\\
We thus have that, letting $F=\sigma^{-1}(E)$,\\\\
(2.1) $L_\gg(\bR)\models ``Ult(L[\V_\gg], F)$ is ill-founded or $L[\V_\gg]\not = Ult(L[\V_\gg], F)."$\\\\
We now establish a sequence of claims leading to the proof of \rlem{extension}.

Because $\sigma\rest \V'_\gg:\V'_\gg\rightarrow \V'_\a$, we have that $\V'_\gg$ is iterable via the $\sigma$-pullback of $\Psi_{\V'_\a}$. Let $\Phi$ be the $\sigma$-pullback of $\Psi_{\V'_\a}$ and $\Phi'$ be the fragment of $\Phi$ that acts on iteration trees that are above $\Theta^\gg$. Notice that\\\\
(3.1) $\Phi_{\V'_\gg|\Theta^\gg}=\Psi_{\M^\a|\Theta^\gg}$.\\
\begin{claim}\label{1} $\V'_\gg\in \M^\a$, $\Phi'\rest \M^\a|\Theta^\a \in \M^\a$, and $\Phi'\rest \M^\a|\Theta^\a$ has a $\Theta^\a+1$-extension in $\M^\a$. 
\end{claim}
\begin{proof} Because $\V'_\gg\in {\sf{HOD}}^{L_\gg(\bR)}$, we have that $\V'_\gg\in \M^\a$. We show that $\Phi'\rest \M^\a|\Theta^\a \in \M^\a$. The proof will also show the third clause.

Suppose $\T$ is a normal tree on $\V'_\gg$ that is above $\Theta^\gg$. Then $\T$ naturally splits into a stack of $\omega$-many normal iteration trees such that the $i$th normal iteration tree in the stack is based on the $i$th window (where by \emph{window} we mean a maximal interval $(\xi, \xi')$ that contains no Woodin cardinals). In light of this observation, it is enough to show that for each $i<\omega$, if $\T\in \M^\a|\Theta^\a$ is a normal iteration tree according to $\Phi'$ with last model $\S$ such that $\T$ is based on $\V'_\gg|\d^i_{\V'_\gg}$ and the main branch of $\T$ doesn't drop, then the fragment of $\Phi'_{\S}\rest \M^\a|\Theta^\a$ that acts on stacks that are above $\d^i_\S$ and below $\d^{i+1}_\S$ is in $\M^\a$. 

We prove this assuming $\T=\emptyset$ to simplify the notation, the general proof being only notationally more complex. Thus, set $\S=\V'_\gg$, and notice that $\Phi\rest \M^\a|\d^\omega_{\M^\a}\in \M^\a$. We thus prove the above claim for $i=1$. More precisely, we show that if $\Lambda$ is the fragment of $\Phi'\rest \M^\a|\Theta^\a$ that acts on normal iteration trees that are based on the interval $(\d^0_\S, \d^1_{\S})$, then $\Lambda\in \M^\a$.

Suppose then $\U\in \M^\a|\Theta^\a$ is a normal iteration tree on $\S$ based on the interval $(\d^0_\S, \d^1_{\S})$ such that $\U$ has a limit length and is according to $\Lambda$. It is enough to show that if $c=\Lambda(\U)$, then $c$ is uniformly definable over $\M^\a$ from $\U$ and $\Theta^\gg$.

We have two cases. Suppose first that either $c$ has a drop or $\pi^\U_c(\d^1_\S)>\d(\U)$. Either way, $\Q(c, \U)$ is defined, and whenever $\tau: \W\rightarrow \Q(c, \U)$ is such that $\tau\in L_\a(\bR)$ and $\W$ is countable, $\W$ has a $\omega_1+1$-iteration strategy in $L_\a(\bR)$. It now follows that $\Q(c, \U)\in \M^\a$ and is uniformly definable from $\U$ and $\Theta^\gg$.\footnote{For example, it appears in the fully backgrounded construction of $\M^\a|\Theta^\a$ done over $\cop(\U)$.} 

Suppose $c$ doesn't have a drop and $\pi^\U_c(\d^1_\S)=\d(\U)$. Let $\S'=\M^\U_c$. Let $\Y\in \S$ be the normal tree on $\S|\d^0_\S(=\V'_\gg|\Theta^\gg)$ such that $\cop(\Y)$ is the output of the fully backgrounded construction of $\S|\d^1_\S$ using extenders whose critical points are above $\d^0_\S$. Notice that $\Y'=_{def}\pi^\U_c(\Y)$ only depends on $\cop(\U)$, and also if $\Phi(\Y)=d$ and $\Phi(\Y')=d'$, then $\pi^{\Y'}_{d'}=\pi^\U_c\rest \d^1_\S \circ \pi^\Y_d$.\footnote{This uses the fact that $\pi^\U_c(b^\S)=\Phi(\pi^\U_c(\T^\S))$.} Since $\Phi\rest \M^\a|\Theta^\a\in \M^\a$, we have that $(\Y', d')\in \M^\a$ is uniformly definable from $\U$. We now have that $c$ is the unique branch of $\U$ such that $\rge(\pi^{\Y'}_{d'})\subseteq \rge(\pi^\U_c)$. Hence, $c\in \M^\a$ and is uniformly definable in $\M^\a$ from $\U$ and $\Theta^\gg$.
\end{proof}

The next claim follows immediately from \rcl{1}. Because $\V'_\gg\in \M^\a$, and because $Ult(\M^\a, F)$ is well-founded (as $F$ is derived from $j$), $Ult(L_\gg[\V'_\gg], F)$ is well-founded. We thus need to show that $Ult(L_\gg[\V'_\gg], F)=L_\gg[\V'_\gg]$. Notice that $L_\gg[\V_\gg]=L_\gg[\V'_\gg]={\sf{HOD}}^{L_\gg(\bR)}$, so it is enough to show that $Ult(L_\gg[\V_\gg], F)=L_\gg[\V_\gg]$.

For $\xi<\Theta^\a$, let  $\K_\xi\insegeq \M^\a$ be the longest initial segment $\X$ of $\M^\a$ such that $\X\models ``\xi$ is a Woodin cardinal." Let $\Lambda_\xi\in \M^\a$ be the unique $(\xi^+)^{\M^\a}+1$-strategy of $\K_\xi$. 

\begin{claim}\label{2} In $\M^\a|\Theta^\a$, $\V_\gg$ is the unique $\Theta^\gg$-sound $\Lambda_{\Theta^\gg}$-Varsovian model.\footnote{See \rdef{iteration of v-models}.}
\end{claim}

\begin{claim}\label{3} $\pi_F^{L_\gg[\V_\gg]}(\V_{\gg})=\V_\gg$. 
\end{claim}
\begin{proof} Just like with $\V'_\gg$, we have that $\V_\gg\in \M^\a$. Let $k: Ult(\M^\a|\Theta^\a, F)\rightarrow  \M^\a|\Theta^\a$ be such that $j=k\circ \pi_F^{\M^\a|\Theta^\a}$. Notice that $\cp(k)\geq \Theta^\gg$, and that we have $\tau: \pi_F^{L_\gg[\V_\gg]}(\V_\gg)\rightarrow \pi_F^{\M^\a|\Theta^\a}(\V_\gg)$ with $\cp(\tau)\geq \Theta^\gg$ such that $\pi_F^{\M^\a|\Theta^\a}\rest \V_\gg=\tau\circ \pi_F^{L_\gg[\V_\gg]}\rest \V_\gg$. We thus have that
\begin{center}
$j\rest \V_\gg=k\circ \tau\circ \pi_F^{L_\gg[\V_\gg]}\rest \V_\gg$.
\end{center}
It is enough to show that $\pi_F^{L_\gg[\V_\gg]}(\V_\gg)\in \M^\a$ and $\M^\a\models ``\pi_F^{L_\gg[\V_\gg]}(\V_\gg)$ is the $\Lambda_{\Theta^\gg}$-Varsovian model over $\M^\a|\Theta^\gg$". Notice that\\\\
(4.1) $\M^\a|\Theta^\a\models ``j(\V_\gg)$ is the unique $j(\Theta^\gg)$-sound $\Lambda_{j(\Theta^\gg)}$-Varsovian model."\\
(4.2) $\Lambda_{\Theta^\gg}$ is the $k\circ \tau$-pullback of $\Lambda_{j(\Theta^\gg)}$.\\\\
It then follows from (4.1) and (4.2), and from the fact that $\pi_F^{L_\gg[\V_\gg]}(\Theta^\gg)=\Theta^\gg$, that $\pi_F^{L_\gg[\V_\gg]}(\V_\gg)$ is the unique $\Theta^\gg$-sound $\Lambda_{\Theta^\gg}$-Varsovian model. Hence, it follows from \rcl{2} that $\pi_F^{L_\gg[\V_\gg]}(\V_\gg)=\V_\gg$.
\end{proof}
This finishes the proof of \rlem{extension}.
\end{proof}
Since $L[\V_{{\sf{Ord}}}]={\sf{HOD}}^{L(\bR)}$, \rlem{extension} implies \rthm{lr optimal}. 
\end{proof}

The proof of \rthm{lr optimal}, \cite{HMMSC} and \cite{HNMAD} can be used to show the following.

\begin{theorem}\label{thetareg optimal} Suppose $V$ is the minimal model of ${\sf{AD}}_{\bR}+``\Theta$ is a regular cardinal", $\mH$ is the hod premouse representation of $V_\Theta^{\sf{HOD}}$ and $j:\mH\rightarrow \mH$ is an elementary embedding. Then $j=id$.
\end{theorem}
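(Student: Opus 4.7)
The plan is to mimic the proof of \rthm{lr optimal}, with modifications dictated by the fact that we now work in $V$, the minimal model of ${\sf{AD}}_{\bR}+``\Theta$ is a regular cardinal", rather than in $L(\bR)$. Three ingredients of the earlier proof remain available. First, by the ${\sf{HOD}}$-analysis of \cite{HMMSC}, $V_\Theta^{{\sf{HOD}}}$ admits a Vopenka-style representation $\mH=L[A]$ for some $A\subseteq\Theta$. Second, $\mH$ can be realized as the direct limit of complete iterates of a Varsovian-style pair $(\R,\Psi)$ of the kind studied in \rprop{complete rep of hod}, now built from an lbr-hod pair rather than a pure mouse pair; the strategy $\Psi$ is supplied by the hod pair machinery of \cite{HMMSC}. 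Third, the companion framework of \rdef{companions} extends to a companion pair $(\P,\Sigma)$ whose derived model recovers the relevant initial segments of $V$, again via \cite{HMMSC} together with the arguments of \cite{MSCR}.

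Assume for contradiction that $j:\mH\to\mH$ is nontrivial, and let $E$ be the extender of length $\Theta+1$ derived from $j$. I would execute the analog of \rlem{extension}: find a small companion layer $L_\a^\Sigma(\bR)$ of $V$ (the appropriate replacement for $L_\a(\bR)$ here), reflect inside this layer via a Skolem-hull construction to obtain $\gg<\Theta^\a$ and an elementary embedding $\sigma:L_\gg^\Sigma(\bR)\to L_\a^\Sigma(\bR)$ with $j\in\rge(\sigma)$, and set $F=\sigma^{-1}(E)$. Then reproduce in this relativized context the three subclaims comprising the proof of \rlem{extension}: (i) iterability of the Varsovian model $\V_\gg$ below $\M^\a$ together with the fact that its local strategy lies in $\M^\a$; (ii) uniqueness of $\V_\gg$ as the $\Theta^\gg$-sound $\Lambda_{\Theta^\gg}$-Varsovian model; and hence (iii) $\pi_F^{L[\V_\gg]}(\V_\gg)=\V_\gg$. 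Concatenating these as in \rlem{extension} shows that $Ult(L[\mH],E)$ is well-founded and equals $L[\mH]$, whence $j$ extends to $j^+:L[\mH]\to L[\mH]$.

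To close out, combine Goldberg's argument (\rprop{gabe}) with the main non-embeddability theorem of \cite{HNMAD}. Because $j^+$ cannot be added over ${\sf{HOD}}$ by set forcing, $A^\#$ exists for every $A\in{\sf{HOD}}$ coded as a set of ordinals; the factoring argument of \rprop{gabe} then produces a nontrivial elementary $k:{\sf{HOD}}\to{\sf{HOD}}$ with $\cp(k)>\Theta$. Since $V$ is a symmetric generic extension of ${\sf{HOD}}$ by the Vopenka-style algebra of size $\Theta$ (in the form appropriate for ${\sf{AD}}_{\bR}+``\Theta$ is regular", see \cite{ADPlusBook}), this $k$ lifts to a nontrivial elementary $k^+:V\to V$, contradicting \cite{HNMAD}.

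The principal obstacle will be the reflection step, specifically producing the elementary $\sigma:L_\gg^\Sigma(\bR)\to L_\a^\Sigma(\bR)$ that captures $j$. In the $L(\bR)$-proof, one uses that $\utilde{\d}^2_1(\Psi)$ is Mahlo in the relevant $L_\a(\bR)$-level and then runs the Skolem-hull construction alluded to in the footnote after (1.2) of \rthm{lr optimal}. In our setting, the target is the lbr-hod-pair analogue of this level inside $V$, and one must (a) isolate the correct companion layer of $V$ that satisfies the appropriate fragment of ${\sf{ZF-Replacement}}$ and absorbs $\mH$, and (b) verify that the Mahlo-type reflection for $\Theta$-like ordinals inside this layer persists in the lbr-hod context, for which one invokes the scale theory of \cite{SchlT} together with the direct-limit constructions of \cite{HMMSC} and the companion machinery of \cite{MSCR}.
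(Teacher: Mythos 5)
Your proposal is an accurate fleshing-out of exactly the one-line proof the paper gives: ``The proof of \rthm{lr optimal}, \cite{HMMSC} and \cite{HNMAD} can be used to show the following.'' You assign \cite{HMMSC} to supplying the hod-pair/Varsovian-model machinery and the ${\sf{HOD}}$-analysis in place of \cite{HODCoreModel}, you run the reflection-and-extension argument of \rlem{extension} in the relativized (lbr-hod, strategy-mouse) context, and you invoke \cite{HNMAD} together with the \rprop{gabe}-style factoring to close the argument. This is the same decomposition the paper intends, and the step you single out as the principal obstacle --- producing the Skolem-hull reflection $\sigma$ in the ${\sf{AD}}_\bR$ setting, where the Mahlo-type reflection of $\Theta^\a$ must be reverified for the relevant companion layer --- is indeed where genuinely new work beyond \rthm{lr optimal} would be needed.

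One small imprecision worth noting. In the proof of \rthm{lr optimal}, the contradiction really lands once $Ult(L[\V_{\sf{Ord}}],E)=L[\V_{\sf{Ord}}]$ is established: then $\pi_E:{\sf{HOD}}\to{\sf{HOD}}$ is a nontrivial elementary embedding coded by a set of ordinals, hence living in a set-generic extension of ${\sf{HOD}}$ (by Vopenka), and Woodin's refinement of the Kunen inconsistency already rules this out. The paper's excursion through $A^\#$ and $k^+:L(\bR)\to L(\bR)$ is commentary on the route rather than the irreducible core of the contradiction. Your write-up leans on the $A^\#$/$k^+$ path and treats \cite{HNMAD} as the final non-embeddability theorem for $V\to V$; that works, but be aware that the contradiction may in fact be obtainable one stage earlier, directly from the Kunen-type rigidity of ${\sf{HOD}}$ against set-generic extensions, with \cite{HNMAD} playing the role of the ${\sf{HOD}}$-analysis for the minimal model of ${\sf{AD}}_\bR+``\Theta$ regular'' rather than (only) the final contradiction.
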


\bibliographystyle{plain}
\bibliography{main}
\end{document}